\setlist[enumerate]{itemsep=0pt}
\setlist[itemize]{itemsep=0pt}
\newcommand\restr[2]{{
  \left.\kern-\nulldelimiterspace 
  #1 
  \right |_{#2} 
  }}
\newtheorem{theorem}{Theorem}[section]
\newtheorem{corollary}{Corollary}[theorem]
\newtheorem{lemma}[theorem]{Lemma}
\newtheorem{proposition}[theorem]{Proposition}
\newtheorem*{claim}{Claim}
\newenvironment{customthm}[1]
  {\innercustomthm}
  {\endinnercustomthm}
\theoremstyle{definition}
\newtheorem{definition}[theorem]{Definition}
\newtheorem{defprop}[theorem]{Definition/Proposition}
\newtheorem*{remark}{Remark(s)}
\begin{document}

\title{On dynamical coherence of partially hyperbolic flows}
\author{Mounib Abouanass}
\date{\today}

\maketitle

\begin{abstract}
    In this paper, we introduce the notion of dynamical coherence for a partially hyperbolic flow $(\varphi^t)$ on a smooth compact manifold $M$, and prove it under the assumption that there exists a compact foliation with trivial holonomy which integrates the subcenter distribution.
\end{abstract}

\tableofcontents

\section{Introduction}
The theory of dynamical systems, discrete or continuous, is particularly well-developed in the presence of hyperbolicity. In the discrete setting, a diffeomorphism $f$ on a smooth compact manifold $M$ is called \textit{Anosov} if the tangent bundle $TM$ splits as a direct sum $E^s \oplus E^u$ of two invariant, transverse subbundles, with $f$ contracting vectors in $E^s$ uniformly and expanding those in $E^u$ uniformly. For smooth flows, the definition is analogous, with both subbundles transverse to the flow direction.

In recent decades, interest has shifted toward a broader class of systems---\textit{partially hyperbolic} ones---pioneered notably in the works of Hirsch--Pugh--Shub \cite{hirsch_invariant_1977} and Brin--Pesin \cite{brin_partially_1974}. A diffeomorphism $f$ on $M$ is said to be (uniformly) partially hyperbolic if $TM$ admits a splitting into three invariant, transverse bundles $
TM = E^s \oplus E^c \oplus E^u,
$
where $E^s$ is uniformly contracted, $E^u$ uniformly expanded, and the \textit{center bundle} $E^c$ is neither contracted nor expanded as strongly. This class includes, for example, time-$t$ maps of Anosov flows. A smooth flow is called partially hyperbolic when its time-one map satisfies this definition; in this case, the flow direction is always contained in the center distribution.

In this article, we refine this definition for flows: we require that the center distribution further decomposes as
$E^c = \mathbb RX \oplus E^{\hat{c}}$,
where $X$ is the smooth non-vanishing vector field inducing the flow, and $E^{\hat{c}}$ is an invariant subbundle called the \textit{subcenter distribution}. This definition has the advantage of considering explicitly a subbundle transverse to the flow on which many techniques can be used (see \cite{fang_rigidity_2007}, \cite{abouanass_global_2025} or \cite{}).

For a partially hyperbolic flow $(\varphi^t)$ on a smooth compact manifold, the stable, unstable, and subcenter distributions are typically only continuous, so classical Frobenius-type integrability results do not apply. Nevertheless, the uniform contraction/expansion properties together with the invariance under $d\varphi^t$ ensure that $E^s$ and $E^u$ are uniquely integrable to flow-invariant foliations $\mathcal{F}^s$ and $\mathcal{F}^u$ respectively, whose leaves are as smooth as the flow.

The integrability of the subcenter bundle $E^{\hat{c}}$, however, is far less understood. Under the usual definition of partial hyperbolicity for flows (i.e., via the time-one map), questions about the center bundle's integrability reduce to the discrete case. There, it is known that even a smooth center bundle may not be integrable, and if a tangent foliation exists, it need not be unique or invariant \cite{wilkinson_stable_1998}.

Analogously to the discrete case, even assuming that $E^{\hat{c}}$ is integrable to a compact foliation does not imply that the subcenter foliation form a fibration, i.e. there might exists subcenter leaves with non-trivial holonomy group.

We call a compact foliation $\mathcal{F}$ \textit{uniformly compact} if every leaf has finite holonomy group. By \cite{epstein_foliations_1976}, this is equivalent to the quotient space $M/\mathcal{F}$ being Hausdorff; in that case, the quotient is a compact metric space under the Hausdorff metric and, if all holonomy groups are trivial, a topological manifold.

In the discrete setting, Bohnet (\cite{bohnet_partially_2011}) proved that if a partially hyperbolic diffeomorphism admits a uniformly compact invariant center foliation, then it is \textit{dynamically coherent}: there exist invariant foliations $\mathcal{F}^{cs}$ and $\mathcal{F}^{cu}$ tangent to $E^{cs} = E^s \oplus E^c$ and $E^{cu} = E^u \oplus E^c$, respectively. Dynamical coherence implies, in particular, that the center bundle itself is integrable to an invariant foliation $\mathcal{F}^c$, and that $\mathcal{F}^s$ and $\mathcal{F}^c$ subfoliate $\mathcal{F}^{cs}$, while $\mathcal{F}^u$ and $\mathcal{F}^c$ subfoliate $\mathcal{F}^{cu}$ (see Proposition 2.4 of \cite{wilkinson_dynamical_2008}). 
In fact, she proved that the center foliation is \textit{complete} in the sense that
$
\bigcup_{z \in \mathcal{F}^{c}(x)} \mathcal{F}^{*}(z) = \bigcup_{w \in \mathcal{F}^{*}(x)} \mathcal{F}^{c}(w)
$
for all $x \in M$ and $* \in \{s,u\}$. \\
Completeness implies dynamical coherence (see Proposition \ref{prop:dyncohcent}).

By analogy, we say a partially hyperbolic flow $(\varphi^t)$ is \textit{dynamically coherent} if there exist flow-invariant foliations $\mathcal{F}^{\hat{c}s}$ and $\mathcal{F}^{\hat{c}u}$ tangent to $E^{\hat{c}s} = E^s \oplus E^{\hat{c}}$ and $E^{\hat{c}u} = E^u \oplus E^{\hat{c}}$, respectively. As we will see (Proposition \ref{lem:weakfol}), this condition implies dynamical coherence of the time-one map $\varphi^1$, and ensures that the subcenter bundle integrates to a flow-invariant foliation $\mathcal{F}^{\hat{c}}$ such that $\mathcal{F}^s$ and $\mathcal{F}^{\hat{c}}$ subfoliate $\mathcal{F}^{\hat{c}s}$, while $\mathcal{F}^u$ and $\mathcal{F}^{\hat{c}}$ subfoliate $\mathcal{F}^{\hat{c}u}$ (Proposition \ref{prop:dyncohsubf}).
In that respect, we mention two results which arise quite immediately from the discrete case: we prove that a partially hyperbolic flow which is dynamical coherent admits global $su\Phi$ holonomy maps (Lemma \ref{lem:admitglobhol}), and if it is moreover subcenter-bunched, then $\mathcal{F}^s$ $C^1$-subfoliates $\mathcal{F}^{\hat{c}s}$ while $\mathcal{F}^u$ $C^1$-subfoliates $\mathcal{F}^{\hat{c}u}$ (Proposition \ref{prop:subunchC1}).
These two results are essential in our study of transversely holomorphic partially hyperbolic flows on seven-dimensional manifolds.

Finally, for a partially hyperbolic flow with an integrable subcenter bundle, we say the subcenter foliation is \textit{complete} if
$
\bigcup_{z \in \mathcal{F}^{\hat{c}}(x)} \mathcal{F}^{*}(z) = \bigcup_{w \in \mathcal{F}^{*}(x)} \mathcal{F}^{\hat{c}}(w)
$
for all $x \in M$ and $* \in \{s,u\}$. Again this implies in particular that the time-one map $\varphi^1$ has a complete center foliation and is thus dynamically coherent.

The main result of this article is:

\begin{customthm}{A}
\label{thm:subcentcomp}
Let $(\varphi^t)$ a partially hyperbolic flow on a smooth compact manifold $M$ whose subcenter distribution is integrable to a flow invariant compact foliation $\mathcal{F}^{\hat{c}}$ with trivial holonomy.\\
    Then $\mathcal{F}^{\hat{c}}$ is complete. 
    In particular, $(\varphi^t)$ is dynamically coherent.
\end{customthm}

In fact, the results seem to be true if we only assume $\mathcal{F}^{\hat{c}}$ uniformly compact, thanks to the works of Bonhet and Bonnati (see \cite{bohnet_partially_2011} and \cite{bohnet_partially_2014}).
We have chosen to prove it for the trivial holonomy assumption as the proofs are easier and can be generalized to the uniformly compact case.
Also, in our study of transversely holomorphic partially hyperbolic flows on smooth compact seven-dimensional manifolds, we only need $\mathcal{F}^{\hat{c}}$ to be compact with trivial holonomy so that the leaf space $M/\mathcal{F}^{\hat{c}}$ is a topological manifold (and thus a smooth manifold, see Theorem ?? of \cite{}).

\section{Definitions}
We only consider smooth ($C^\infty$) connected manifolds without boundary.
\subsection{Foliations}
We start by recalling some notions coming from the theory of foliations (see \cite{camacho_geometric_1985}, \cite{moerdijk_introduction_2003}, \cite{lee_manifolds_2009} and \cite{candel_foliations_2000}). 
\begin{defprop}
\label{defprop:fol}
    Let $M$ a smooth manifold of dimension $n\in \mathbb N$. Let $r \geq 1$ (or $r\in \{\infty\}$) and $k\leq n$.\\
    A \emph{$C^r$ foliation} $\mathcal{F}$ of dimension $k$ on $M$ (or codimension $n-k$) is defined by one of the following equivalent assertions:
    \begin{enumerate}
        \item a $C^r$-atlas $(U_i, \psi_i)_i$ on $M$ which is maximal with respect to the following properties:
            \begin{enumerate}
                \item For all $i$, $\psi_i(U_i)=U_i^1 \times U_i^2$, where $U_i^1$ and $U_i^2$ are connected open subsets of $\mathbb R^k$ and $\mathbb R^{n-k}$ respectively ;
                \item For all $i,j$, there exist $C^r$ maps $f_{ij}$ and $h_{ij}$ such that
                \[\forall(x,y) \in \psi_i(U_i \cap U_j) \subset \mathbb R^k \times \mathbb R^{n-k}, \; \psi_i \circ \psi_j^{-1}(x,y)=(f_{ij}(x,y), h_{ij}(y)).\]
            \end{enumerate}
        \item a maximal atlas $(U_i, s_i)_i$, where each $U_i$ is an open subset of $M$ and $s_i:U_i \to \mathbb R^{n-k}$ is a $C^r$ submersion, satisfying:
            \begin{enumerate}
                \item $\bigcup_i U_i = M$ ;
                \item For all $i,j$, there exists a $C^r$ diffeomorphism
                \[\gamma_{ij}: s_i(U_i\cap U_j) \to s_j(U_i\cap U_j) \text{  such that  } \restr{s_i}{U_i\cap U_j} = \restr{\gamma_{ij}\circ s_j}{U_i\cap U_j}. \]
            \end{enumerate}
        \item a partition of $M$ into a family of disjoint connected $C^r$ immersed submanifolds $(L_\alpha)_\alpha$ of dimension $k$  such that for every $x \in M$, there is a $C^r$ chart $(U, \psi)$ at $x$, of the form $\psi: U \to U^1 \times U^2$ where $U^1$ and $U^2$ are connected open subsets of $\mathbb R^k$ and $\mathbb R^{n-k}$ respectively, satisfying: for each $L_\alpha$, for each connected component $(U \cap L_\alpha)_\beta$ of $U\cap L_\alpha$, there exists $c_{\alpha,\beta} \in \mathbb R^{n-k}$ such that 
        \[ \psi ((U\cap L_\alpha)_\beta) = U^1 \times \{c_{\alpha, \beta}\}.\]
     \end{enumerate}
    The maps $h_{ij}$ and $\gamma_{ij}$ are called \textit{transition maps}.\\
    We will call (for each equivalent assertion $(1)$, $(2)$ and $(3)$):
    \begin{itemize}
        \item a \emph{plaque}:
            \begin{enumerate}
                \item a set of the form $\psi_i^{-1}(U_i \times \{c\})$, for $c \in U_i^2$ ;
                \item a connected component of a fiber of $s_i$ ;
                \item a connected component $(U\cap L_\alpha)_\beta$ of $U\cap L_\alpha$.
            \end{enumerate}
        \item a \emph{leaf} of the foliation an equivalence class for the following equivalence relation: two points $x$ and $y$ are equivalent if and only if there exist a sequence of foliation charts $U_1, \ldots, U_k$ and a sequence of points $x=p_0, p_1, \ldots, p_{k-1},p_k=y$ such that, for $i\in\llbracket 1,k\rrbracket$, $p_{i-1}$ and $p_{i}$ lie on the same plaque of $U_i$.
    \end{itemize}
    \end{defprop}
    We will represent abusively a foliation by the set of its leaves $\mathcal{F}$ and will note $\mathcal{F}_x$ or $\mathcal{F}(x)$ the leaf of the foliation containing $x\in M$. \\
    In case of the orbit foliation of a non-vanishing smooth vector field, we will write $\Phi$. 
    \begin{remark}
    \label{rem:2.1}
    \begin{itemize}
    \leavevmode
        \item A $C^r$ foliation $\mathcal{F}$ gives rise to a $C^{r-1}$ subbundle of $TM$, noted $T\mathcal{F}$, called the \textit{tangent bundle} to the foliation $\mathcal{F}$, whose fibers are the tangent spaces to the leaves. We also define the \textit{normal bundle} $\nu:= TM \diagup T\mathcal{F}$ of the foliation. It is a vector bundle whose transition functions are given by the differential of the transition maps $h_{ij}$.
        \item We can define in the same way a \emph{holomorphic} foliation on a complex manifold by replacing "$C^r$" in the above definition with "holomorphic" and $\mathbb R$ with $\mathbb C$.
    \end{itemize}
    \end{remark}

From the first and third formulations we can define a more general notion of foliation adapted to our purposes: 
\begin{definition}
\label{def:hol}
    Let $M$ a smooth manifold of dimension $n\in \mathbb N$. Let $r \geq 1$ (or $r=\infty$) and $k\leq n$.\\
    A \emph{(topological) foliation} with $C^r$ leaves, $\mathcal{F}$, of dimension $k$ (or codimension $n-k$) on $M$ is defined by one of the following equivalent assertions:
    \begin{enumerate}
        \item a $C^0$-atlas $(U_i, \psi_i)_i$ on $M$ which is maximal with respect to the following properties:
            \begin{enumerate}
                \item For all $i$, $\psi_i(U_i)=U_i^1 \times U_i^2$, where $U_i^1$ and $U_i^2$ are connected open subsets of $\mathbb R^k$ and $\mathbb R^{n-k}$ respectively ;
                \item For all $i,j$, there exist $C^0$ maps $f_{ij}$ and $h_{ij}$ such that
                \[\forall(x,y) \in \psi_j(U_i \cap U_j) \subset \mathbb R^k \times \mathbb R^{n-k}, \; \psi_i \circ \psi_j^{-1}(x,y)=(f_{ij}(x,y), h_{ij}(y))\]
                and for every such $y$, the map $x\mapsto f_{ij}(x,y)$ is $C^r$.
            \end{enumerate}
        \item a partition of $M$ into a family of disjoint connected $C^r$ immersed submanifolds $(L_\alpha)_\alpha$ of dimension $k$  such that for every $x \in M$, there is a $C^0$ chart $(U, \psi)$ at $x$, of the form $\psi: U \to U^1 \times U^2$ where $U^1$ and $U^2$ are connected open subsets of $\mathbb R^k$ and $\mathbb R^{n-k}$ respectively, satisfying: for each $L_\alpha$, for each connected component $(U \cap L_\alpha)_\beta$ of $U\cap L_\alpha$, there exists $c_{\alpha,\beta} \in \mathbb R^{n-k}$ such that 
        \[ \psi ((U\cap L_\alpha)_\beta) = U^1 \times \{c_{\alpha, \beta}\}\]
        and the map $\operatorname{pr}_1\circ \psi|_{(U \cap L_\alpha)_\beta}:(U \cap L_\alpha)_\beta\to U^1$ is a $C^r$ chart for $L$.
     \end{enumerate}
     We call such an atlas a \emph{foliated atlas} adapted to $\mathcal{F}$. 
\end{definition}

\begin{remark}
\label{rem:fol}
    \leavevmode
        \begin{itemize}
        \item For every $y$, the map $x\mapsto f_{ij}(x,y)$ in the previous definition is automatically a $C^r$ diffeomorphism.  
        \item We can still define the tangent bundle $T\mathcal{F}$ to the foliation $\mathcal{F}$ if it is a foliation with $C^r$ leaves, $r\geq 1$.
        It is a vector bundle over $M$ whose transition maps are given by the differential along the first variable of the maps $f_{ij}$, ie by $d_1f_{ij}$.
        \item In case $T\mathcal{F}$ is a continuous subbundle of $TM$, the foliation $\mathcal{F}$ is said to be \emph{integral}. This is the same as saying that the partial derivatives of $f_{ij}$ with respect to $x$ vary continuously with $(x,y)$, i.e. $\mathcal{F}$ has \emph{uniformly $C^1$ leaves} (see \cite{pugh_holder_1997} and \cite{wilkinson_dynamical_2008} for a discussion about integrability of continuous subbundles of $TM$).
        In order to facilitate the statement of some results, we will call integral foliations \emph{$C^0$ foliations}.\\
        Note that every $C^r$ foliation, $r\geq 1$, is integral, and by definition any integral foliation has $C^1$ leaves.
        \item We can define in the same way (if $k=2k'$ is even) a \emph{foliation with holomorphic leaves} by replacing "$C^r$" in the above definition with "holomorphic".
        \end{itemize}
\end{remark}

As mentioned in \cite{bohnet_partially_2014}, an integral foliation $\mathcal{F}^*$ on a smooth compact manifold $M$ has the following properties. Denote by $d^*$ the distance in a leaf $L$ of $\mathcal{F}^*$ coming from the induced metric of $M$ on $L$. Fix a finite foliated atlas $(U_i,\psi_i)$ for $\mathcal{F}$. Then:

\begin{itemize}
    \item For every $\eta > 0$, there is $\mu$ so that, if $x,y$ are points in the same plaque such that $d(x,y) < \mu$, then and $d^{*}(x,y) < (1+\eta)d(x,y)$. As a consequence, there exists $\Delta^{*} > 0$ such that for every $i$ and every $x,y \in U_{i}$, if $d^{*}(x,y) < \Delta^{*}$, then $x$ and $y$ are in the same plaque.
    \item If $\mathcal{F}^{*}$ is a compact foliation (that is, every leaf is compact), then each of its leaf intersects any foliation chart in a finite number of plaques (see \cite{camacho_geometric_1985}). Therefore, two points in the same leaf which are close enough in the ambient manifold are in the same plaque. In that case, the above can be reformulated as:\\
    For every $\eta > 0$, there is $\mu > 0$ so that, if $x,y$ are points in the same leaf such that $d(x,y) < \mu$, then $d^{*}(x,y) < (1+\eta)d(x,y)$.
\end{itemize}

From now on, we will always consider $C^0$ foliations on a smooth manifold $M$.

\subsection{Holonomy}
    
We now define the fundamental concept of \textit{holonomy} for an integral foliation (see \cite{moerdijk_introduction_2003}, \cite{camacho_geometric_1985}).
In the following, a $C^0$ diffeomorphism means a homeomorphism. 

\begin{definition}
    A \emph{transversal} to the (integral) foliation $\mathcal{F}$ is a smooth submanifold $T$ of $M$ such that for every $x\in T$, $T_xT\oplus T_x\mathcal{F}=T_xM$.
\end{definition}
\begin{defprop}
\label{defprop:hol}
    Let $ \mathcal{F}$ a $C^r$ foliation on a smooth manifold $M$ ($r \in \mathbb N \cup \{\infty\}$). Let $x,y$ belonging to the same leaf $L$ of $\mathcal{F}$ and $\alpha$ a path from $x$ to $y$ in $L$. Let also $T,S$ two small transversal to $\mathcal{F}$ at $x$ and $y$ respectively.
    \begin{itemize}
        \item If there exists a foliation chart $U$ containing $\alpha([0,1])$, we can define a germ of $C^r$ diffeomorphism $h(\alpha)^{S,T}$ from a small open subset $A$ of $T$ to an open subset of $S$ such that: \begin{enumerate}
            \item $h(\alpha)^{S,T}(x)=y$ ;
            \item For any $x'\in A$, $h(\alpha)^{S,T}(x')$ lies on the same plaque in $U$ as $x'$.
        \end{enumerate}
        Moreover, the germ $h(\alpha)^{S,T}$ does not depend on $U$ nor the path in $L\cap U$ connecting $x$ and $y$.
        \item In the general case, we can choose a sequence of foliation charts $U_1, \ldots, U_k$ such that for all $i$, $\alpha([\frac{i-1}{k}, \frac{i}{k}]) \subset U_i$, and a sequence $(T_i)_i$ of transversal sections of $\mathcal{F}$ at $\alpha(\frac{i}{k})$, with $T_0=T$ and $T_k=S$. Let $\alpha_i$ a path in $L\cap U_i$ from $\alpha(\frac{i-1}{k})$ to $\alpha(\frac{i}{k})$. \\
        We define 
        \[h(\alpha)^{S,T}:=h(\alpha_k)^{T_k, T_{k-1}} \circ \cdots \circ h(\alpha_1)^{T_1, T_0}.
        \]
        Moreover, $h(\alpha)^{S,T}$ depends only on $T, S$ and $\alpha$, and satisfies the same properties $(1), (2)$.
    \end{itemize}
    The germ of $C^r$ diffeomorphism $h(\alpha)^{S,T}$ is called the \emph{$\mathcal{F}$-holonomy} of the path $\alpha$ with respect to the transversals $T$ and $S$.
\end{defprop} 
    We list some basic properties of holonomy maps:
    \begin{enumerate}[label=(\roman*)]
        \item If $\alpha$ is a path in $L$ from $x$ to $y$ and $\beta$ a path in $L$ from $y$ to $z$, and if $T$, $S$ and $R$ are transversal sections of $\mathcal{F}$ at $x$, $y$ and $z$ respectively, then
        \[h(\beta \alpha)^{R,T}=h(\beta)^{R,S} \circ h(\alpha)^{S,T},\]
        where $\beta\alpha$ is the concatenation of the paths $\alpha$ and $\beta$ ;
        \item If $\alpha$ and $\beta$ are homotopic paths in $L$ relatively to endpoints from $x$ to $y$, and if $T$ and $S$ are transversals sections at $x$ and $y$ respectively, then $h(\beta)^{S,T}= h(\alpha)^{S,T}$.
        \item If $\alpha$ is a path in $L$ from $x$ to $y$, and if $T,T'$ and $S,S'$ are pairs of transverse sections at $x$ and $y$, respectively, then \[h(\alpha)^{S',T'}=h(\overline{y})^{S',S} \circ h(\alpha)^{S,T} \circ h(\overline{x})^{T,T'},\] where $\overline{x}$ is the constant path with image $x$.
    \end{enumerate}
    \begin{remark}
    \label{rem:hol}
    \begin{itemize}
    \leavevmode
        \item The construction of the holonomy diffeomorphism for two points in the same plaque of the same foliation chart $(U, \psi)$ is as follows. Keep the notations of Definition/Proposition \ref{defprop:hol}. Denote by $n$ the dimension of $M$ and $q$ the codimension of $\mathcal{F}$. 
        $\psi(T) , \psi(S)\subset \mathbb R^{n}$ are vertical graphs of $C^r$ maps from $\mathbb R^q$ to $\mathbb R^{n-q}$.
        By projecting these graphs on $\mathbb R^q$, we obtain two $C^r$ diffeomorphisms $\Psi: T \to \mathbb R^q$ and $\Psi': S \to \mathbb R^q$. Since the leaves of the foliation are the horizontal lines, it comes 
        \[h(\alpha)^{S,T}=\Psi'^{-1} \circ \Psi.\]
        This means that under these coordinates, the holonomy between two points in the same plaque of the same foliation chart is equal to the identity.
        \item The holonomy of a path in a leaf with respect to the natural transversals given by the foliation charts can be expressed in local coordinates as a composition of transition functions $h_{ij}$ (see Definition/Proposition \ref{defprop:fol}). More precisely, let $\alpha$ a path in $L$ from $x$ to $y$ and a sequence of foliation charts $U_1, \ldots, U_k$ such that for all $i$, $\alpha([\frac{i-1}{k}, \frac{i}{k}]) \subset U_i$ and $U_1$ and $U_k$ are centered at $x$ and $y$ respectively.
        Let $T=\psi_1^{-1}(\{0\} \times U_1^2)$ and $S=\psi_k^{-1}(\{0\} \times U_k^2)$.
        Then there exists a small open subset $V$ of $T$ such that 
        \[h(\alpha)^{S,T}= \restr{\psi_k^{-1} \circ h_{k,k-1} \circ \cdots \circ h_{2,1} \circ \psi_1}{V}.
        \]
    \end{itemize}
    \end{remark}
    
\begin{definition}
    Given a $C^r$ foliation $\mathcal{F}$ on a manifold $M$, the set of all holonomy maps of any path with respect to any transversals to $\mathcal{F}$ defines a pseudo-group called the \emph{holonomy pseudo-group} of the foliation $\mathcal{F}$.\\
    For every leaf $L$ of $\mathcal{F}$, $x\in L$ and small transversal section $T$ at $x$, there is a group homomorphism 
    \[h^T:=h^{T,T}: \pi_1(L,x) \to \text{Diff}^r_x(T), \]
    where $\text{Diff}^r_x(T)$ is the group of $C^r$ diffeomorphisms of $T$ which fix $x$, 
    called the \emph{holonomy homomorphism} of $L$, determined up to conjugation.\\
    The set $h^T(\pi_1(L,x))$ is a group called the \textit{holonomy group} of the leaf $L$, determined also up to conjugation.\\
    We define the space $\widetilde{L}\diagup \ker(h^T)$, where $\widetilde{L}$ is the universal covering space of $L$. It does not depend on $x \in L$ nor $T$ and is called the \emph{holonomy covering} of $L$.
    \end{definition}
    
    We say that a $C^r$ foliation $\mathcal{F}$ has \textit{trivial} (respectively \emph{finite}) \textit{holonomy} if the holonomy group of every leaf of $\mathcal{F}$ is trivial (respectively finite).
    
\subsection{Compact foliations}
We recall some results about compact foliations, that is foliations whose leaves are compact.\\
The following result is proven in \cite{carrasco_compact_2015}. 
\begin{theorem}
\label{thm:carra}
    Let $\mathcal{F}$ a compact $C^0$ foliation on a smooth manifold $M$.\\
    Then the following assertions are equivalent:
    \begin{enumerate}[label=(\roman*)]
        \item The quotient map $\pi: M\to M/\mathcal{F}$ is closed.
        \item $M/\mathcal{F}$ is Hausdorff.
        \item Every leaf of $\mathcal{F}$ has arbitrarily small saturated neighborhoods.
        \item The saturation of every compact subset of $M$ is compact.
    \end{enumerate}
    If $M$ is compact, then the above properties are also equivalent to:
    \begin{enumerate}[label=(\roman*),start=5]
        \item The holonomy group of every leaf of $\mathcal{F}$ is finite.
        \item The volume of the leaves is uniformly bounded from above.
    \end{enumerate}
    Moreover if $\mathcal{F}$ has trivial holonomy, then the resulting leaf space $M/\mathcal{F}$ is a topological manifold.
\end{theorem}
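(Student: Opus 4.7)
The plan is to split the theorem into three parts. First, I would prove the cycle of equivalences among (i)--(iv) for arbitrary smooth $M$ without any compactness assumption on the ambient manifold. Second, I would append (v) and (vi) to this cycle under the hypothesis that $M$ is compact, appealing to Epstein's theorem. Third, I would deduce the topological manifold structure under the trivial holonomy assumption via Reeb's local stability theorem, which is the central black-box input throughout.

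For the core cycle I would establish (i) $\Rightarrow$ (iii) $\Rightarrow$ (ii) $\Rightarrow$ (iv) $\Rightarrow$ (i). The implication (i) $\Rightarrow$ (iii) is essentially formal: given a leaf $L$ and an open $U \supseteq L$, the complement $A = M \setminus U$ is closed, so by (i), $\pi(A)$ is closed in $M/\mathcal{F}$ and misses $[L]$; therefore $V := M \setminus \pi^{-1}(\pi(A))$ is an open saturated neighborhood of $L$ sitting inside $U$. For (iii) $\Rightarrow$ (ii), I would separate two distinct (hence disjoint) compact leaves $L \neq L'$ by disjoint open sets in $M$ (possible since $M$ is a metrizable manifold), then shrink each to a saturated open set by (iii); their images under $\pi$ separate $[L]$ and $[L']$ in $M/\mathcal{F}$. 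For (ii) $\Rightarrow$ (iv), I would invoke that on a locally compact Hausdorff $M$, a continuous surjection onto a Hausdorff space with compact fibers is proper, so the saturation $\pi^{-1}(\pi(K))$ of a compact $K$ is compact as the preimage of $\pi(K)$. Finally, (iv) $\Rightarrow$ (i) follows by a sequential argument: for a closed $A \subseteq M$ and a sequence $[L_n] \to [L]$ in $M/\mathcal{F}$ with $L_n \cap A \neq \emptyset$, choose points $y_n \in L_n$ converging to $y \in L$; applying (iv) to a compact neighborhood containing $\{y\} \cup \{y_n : n \geq 0\}$ places every $L_n$ in a fixed compact saturated set, from which a subsequence of points of $L_n \cap A$ converges to a point of $L \cap A$.

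For the compact-$M$ additions, the equivalence (ii) $\Leftrightarrow$ (v) is precisely Epstein's theorem (\cite{epstein_foliations_1976}): finite holonomy gives arbitrarily small saturated neighborhoods by Reeb's local stability theorem, hence (iii) and thus (ii); while an infinite-holonomy leaf in a compact ambient manifold produces a sequence of nearby leaves of unbounded combinatorial complexity, obstructing the separation required for Hausdorffness. The equivalence (v) $\Leftrightarrow$ (vi) then follows because on compact $M$ the volume of a leaf $L$ is comparable, up to a uniform geometric factor coming from the bounded geometry of the ambient metric, to the degree of the holonomy cover $\widetilde{L} \to L$ times the volume of a fundamental domain; uniform finiteness of holonomy is then equivalent to uniform boundedness of leaf volumes. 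For the topological manifold assertion under trivial holonomy, Reeb stability provides around each leaf $L$ a saturated neighborhood $U \cong L \times T$ with $T$ a small transversal; the quotient $\pi(U) \cong T$ is an open subset of $\mathbb{R}^{n-k}$, so $M/\mathcal{F}$ is locally Euclidean of dimension $n-k$, and it is Hausdorff by (v) $\Rightarrow$ (ii).

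The main obstacle is the direction (v) $\Rightarrow$ (ii) of Epstein's theorem: ruling out non-Hausdorff behavior in the presence of infinite holonomy on a compact manifold requires the careful accumulation argument of \cite{epstein_foliations_1976}, which I would cite rather than reprove; likewise the implication (iv) $\Rightarrow$ (i) would benefit from the more polished treatment of \cite{carrasco_compact_2015}. The remaining steps are routine once Reeb's local stability theorem is available.
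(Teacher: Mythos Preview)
The paper does not give its own proof of this theorem: it is introduced with ``The following result is proven in \cite{carrasco_compact_2015}'', and the equivalence with (v) and (vi) under compactness of $M$ is attributed immediately afterwards to the Generalized Reeb Stability theorem (stated as Theorem~\ref{thm:reeb}, also without proof). So there is no in-paper argument to compare against; the result is quoted as background.

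That said, your outline has a genuine gap at (ii) $\Rightarrow$ (iv). The general fact you invoke---that a continuous surjection from a locally compact Hausdorff space onto a Hausdorff space with compact fibers is proper---is false: the bijection $[0,2\pi)\to S^1$, $t\mapsto e^{it}$, satisfies all the hypotheses yet is not proper (the preimage of $S^1$ is not compact). Foliation quotients are additionally open, but even with that extra hypothesis properness is not a one-line general-topology citation; one really needs the foliated-chart structure, or a tube-lemma argument passing through (iii). Your (iv) $\Rightarrow$ (i) also has a defect at the final step: after extracting $a_{n_k}\to a\in A$ inside the compact saturated set, you need $a\in L$, but continuity of $\pi$ only gives $[L_{n_k}]\to[L_a]$, and without Hausdorffness of $M/\mathcal{F}$---which is (ii), not yet established in that branch of the cycle---you cannot conclude $[L_a]=[L]$. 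You flag this step as rough and defer to \cite{carrasco_compact_2015}, which is fair enough given that this is exactly what the paper itself does.
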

We say that a compact foliation is \emph{uniformly compact} if the holonomy group of each of its leaves is finite.\\
In fact, the equivalence with the two last assertions is due to the following theorem (see \cite{candel_foliations_2000} for more details):
\begin{theorem}[Generalized Reeb Stability]
\label{thm:reeb}
    Let $L$ a compact leaf of a $C^r$ foliation on a smooth manifold $M$. Assume its holonomy group $\operatorname{Hol}(L)$ is finite.\\
    Then there exists a normal neighborhood $p:V\to L$ of $L$ in $M$ which is a $C^r$ fiber bundle with structure group $\operatorname{Hol}(L)$.\\
    Furthermore, each leaf $L'|_V$ is a covering space $p|_{L'}:L' \to L$ with $k\leq |\operatorname{Hol}(L)|$ sheets and the leaf $L'$ has a finite holonomy group of order $\frac{\operatorname{Hol}(L)}{k}$.
\end{theorem}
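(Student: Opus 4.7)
The plan is to realize the normal neighborhood $V$ as the associated bundle $\hat{L} \times_{\operatorname{Hol}(L)} T_0$, where $\hat{p}:\hat{L}\to L$ is the holonomy cover of $L$ and $T_0$ is a $\operatorname{Hol}(L)$-invariant transversal, and then embed this abstract model foliation-preservingly into $M$. This makes the fiber bundle structure and the description of nearby leaves essentially combinatorial.

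I first construct the transversal carrying the $\operatorname{Hol}(L)$-action. Choose $x_0 \in L$ and a small transversal $T$ at $x_0$. Since $L$ is compact, $\pi_1(L, x_0)$ is finitely generated; composing a finite generating set under the holonomy homomorphism produces the whole finite group $\operatorname{Hol}(L)$ of germs of $C^r$ diffeomorphisms of $T$ at $x_0$. Shrinking $T$ to a sub-transversal $T_0 \ni x_0$ makes every such germ an honest $C^r$ diffeomorphism of $T_0$ fixing $x_0$, and a further shrinking (or invariant averaging) yields a $T_0$ on which the finite group $\operatorname{Hol}(L)$ acts by $C^r$ diffeomorphisms.

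I then build the abstract model and embed it. The holonomy cover $\hat{L} = \tilde{L}/\ker h^T \to L$ is a regular covering of the compact $L$ with deck group $\operatorname{Hol}(L)$, hence $\hat{L}$ is itself compact. The diagonal action of $\operatorname{Hol}(L)$ on $\hat{L} \times T_0$---by deck transformations on the first factor and by the holonomy action on the second---preserves the horizontal product foliation with leaves $\hat{L}\times\{t\}$. Set $V' := \hat{L} \times_{\operatorname{Hol}(L)} T_0$; the projection to $\hat{L}/\operatorname{Hol}(L) = L$ exhibits $V'$ as a $C^r$ fiber bundle over $L$ with fiber $T_0$ and structure group $\operatorname{Hol}(L)$. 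I then define $\Psi: V' \to M$ by sending $[\hat{y}, t]$ to the endpoint of the $\mathcal{F}$-holonomy transport of $t$ along $\hat{p}\circ\hat{\alpha}$, where $\hat{\alpha}$ is any path in $\hat{L}$ from a chosen base lift $\hat{x}_0$ to $\hat{y}$. Well-definedness follows because two such paths differ up to homotopy by a deck loop, whose projection is a loop in $L$ acting on $T_0$ by exactly the element of $\operatorname{Hol}(L)$ that the quotient identifies away.

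By construction $\Psi$ is foliation-preserving. The main technical point---and the principal obstacle of the argument---is to promote $\Psi$ to a $C^r$ embedding onto an open saturated neighborhood of $L$: local diffeomorphism at each point is immediate from holonomy germs being local diffeomorphisms, but global injectivity and openness of the image require a uniform lower bound on the size of transversals on which all relevant holonomy maps $h(\alpha)^{S,T}$ (for $\alpha$ a path in $L$ between chosen transversal sections) are simultaneously defined. Compactness of $L$ supplies this via a finite foliated cover of $L$ together with a Lebesgue-number argument on the path space, and is where the hypothesis cannot be relaxed. Once $\Psi$ is shown to be such an embedding, the analysis of nearby leaves is purely group-theoretic: for $t \in T_0$, the $V'$-leaf through $[\hat{x}_0, t]$ is the image of $\hat{L} \times (\operatorname{Hol}(L)\cdot t)$, hence homeomorphic to $\hat{L}/\operatorname{Stab}_{\operatorname{Hol}(L)}(t)$, which covers $L$ with $k = [\operatorname{Hol}(L):\operatorname{Stab}(t)] \le |\operatorname{Hol}(L)|$ sheets, and its holonomy group is $\operatorname{Stab}(t)$, of order $|\operatorname{Hol}(L)|/k$, as claimed.
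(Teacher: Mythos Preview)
The paper does not supply a proof of this theorem: it is stated as the classical Generalized Reeb Stability Theorem with a reference to \cite{candel_foliations_2000} and then used as a tool. Your sketch is therefore not to be compared with anything in the paper, but it is worth noting that what you outline is precisely the standard argument one finds in the cited references (Candel--Conlon, Moerdijk--Mr\v{c}un, Camacho--Lins Neto): realize a tubular neighborhood as the associated bundle $\hat{L}\times_{\operatorname{Hol}(L)} T_0$ over the holonomy cover, embed it into $M$ by holonomy transport, and read off the structure of nearby leaves from the orbit--stabilizer relation for the finite $\operatorname{Hol}(L)$-action on $T_0$.

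Two small points of care in your write-up. First, when you check well-definedness of $\Psi$, the phrase ``two such paths differ up to homotopy by a deck loop'' is not quite right: two paths in $\hat{L}$ from $\hat{x}_0$ to $\hat{y}$ differ by a loop in $\hat{L}$ based at $\hat{x}_0$, and what you use is that such a loop projects to a loop in $L$ lying in $\ker h^T$, hence with trivial holonomy; the separate invariance under the diagonal $\operatorname{Hol}(L)$-action (i.e.\ $[\hat{y},t]=[g\hat{y},gt]$) is a distinct verification which you should spell out. Second, the claim that the holonomy group of $L'|_V$ equals $\operatorname{Stab}_{\operatorname{Hol}(L)}(t)$ is correct but deserves a sentence: one identifies $\pi_1(L'|_V)$ with $\hat{p}_*^{-1}(\operatorname{Stab}(t))\subset\pi_1(L,x_0)$ via the covering $L'|_V\to L$, and observes that its image under the holonomy homomorphism of $\mathcal{F}$ at the point $t\in T_0$ is exactly $\operatorname{Stab}(t)$. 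With those two clarifications the sketch is complete and matches the textbook proof.
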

A corollary of this result, which motivates our assumption of trivial holonomy for the subcenter foliation, is that the leaves with trivial holonomy of a compact foliation are \emph{generic} (see \cite{candel_foliations_2000}):
\begin{corollary}
    Let $\mathcal{F}$ a compact $C^0$ foliation.\\
    Then the set of leaves with trivial holonomy is open and dense.
\end{corollary}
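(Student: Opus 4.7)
The proof will rest entirely on Generalized Reeb Stability (Theorem \ref{thm:reeb}). In the setting of this paper $M$ is compact, so Theorem \ref{thm:carra} guarantees that every leaf of $\mathcal{F}$ has finite holonomy and Reeb stability applies at every point; if one wished to state the corollary without compactness of $M$, one would first restrict to the open saturated set of leaves with finite holonomy, which is itself dense by Epstein \cite{epstein_foliations_1976}.

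Openness of the trivial-holonomy locus is immediate from Reeb stability. If $\operatorname{Hol}(L) = \{e\}$, Theorem \ref{thm:reeb} produces a normal neighborhood $p : V \to L$ which is a trivial fiber bundle; every leaf $L' \subset V$ is then a covering of $L$ with $k \leq |\operatorname{Hol}(L)| = 1$ sheets, so $k = 1$, and the order formula $|\operatorname{Hol}(L')| = |\operatorname{Hol}(L)|/k = 1$ shows $L'$ also has trivial holonomy. Hence the trivial-holonomy set is a union of open fibered neighborhoods.

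For density, I would fix any leaf $L$ and set $G := \operatorname{Hol}(L)$, which is finite. A Reeb neighborhood $V \to L$ has structure group $G$, so its leaves correspond bijectively to $G$-orbits on a small transversal $T$ at a point of $L$, on which $G$ acts by the holonomy representation. By the order formula of Theorem \ref{thm:reeb}, the leaf through $t \in T$ has trivial holonomy if and only if the $G$-orbit of $t$ is free. It therefore suffices to show that the subset of $T$ with trivial $G$-stabilizer is dense in $T$, and this reduces to showing, for each non-identity $g \in G$, that the fixed set $\operatorname{Fix}(g) \subset T$ has empty interior: a finite union of nowhere dense sets is nowhere dense, so its complement meets every nonempty open subset of $T$, producing a trivial-holonomy leaf arbitrarily close to $L$.

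The main obstacle is precisely this last topological fact in the $C^0$ setting. In the smooth case ($r \geq 1$) one averages a Riemannian metric over $G$ and linearizes the action at a fixed point, whereupon each nontrivial $g$ becomes a nontrivial orthogonal transformation whose fixed subspace is proper. In the $C^0$ case one must invoke Newman's theorem: no nontrivial finite group of homeomorphisms of a connected manifold can fix a nonempty open set. This applies to the $G$-action on $T$ because $G$ acts faithfully by construction, being the image of $h^T$ inside $\operatorname{Diff}^0_x(T)$; together with the previous reductions this closes the proof.
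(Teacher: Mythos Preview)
The paper does not supply its own proof here; it simply attributes the statement to \cite{candel_foliations_2000}. So there is no in-paper argument to compare against, and the question reduces to correctness.

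Your openness argument is fine: a leaf with trivial holonomy has in particular finite holonomy, so Theorem \ref{thm:reeb} applies at $L$ and the order formula forces every nearby leaf to have trivial holonomy as well.

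The density argument has a genuine gap. You write that ``Theorem \ref{thm:carra} guarantees that every leaf of $\mathcal{F}$ has finite holonomy'' once $M$ is compact, but Theorem \ref{thm:carra} only asserts that conditions (i)--(vi) are \emph{equivalent}; it does not say any of them holds automatically. The Sullivan and Epstein--Vogt examples exhibit compact $C^0$ foliations on compact manifolds with leaves of infinite holonomy, so you cannot invoke Reeb stability at an arbitrary leaf $L$. Your Reeb-neighborhood-plus-Newman argument is sound once $\operatorname{Hol}(L)$ is known to be finite, and it is exactly the proof one gives in the uniformly compact case; the fix you mention parenthetically---first pass to the dense open saturated set of leaves with finite holonomy---is the correct one, but it is needed regardless of whether $M$ is compact. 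Alternatively, the general Baire-category argument (Epstein--Millett--Tischler, Hector) yields density of the trivial-holonomy locus for \emph{any} $C^0$ foliation and sidesteps the issue entirely.
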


\subsection{Partially hyperbolic flows}
See \cite{hirsch_invariant_1977}, \cite{brin_partially_1974} and \cite{barreira_nonuniform_2007} for more details.
\begin{definition}
\label{def:parthypflo}
    Let $M$ a smooth compact manifold and $(\varphi^t)_{t \in \mathbb R}$ a smooth flow on $M$  generated by a nowhere vanishing smooth vector field $X$.\\
   The flow $(\varphi^t)$ is said to be \textit{(uniformly) partially hyperbolic} on $M$ if there exist $(d\varphi^t)$-invariant continuous subbundles of $TM$ - $E^{s}$, $E^{\hat{c}}$ and $E^{u}$ - a Riemannian metric $\|.\|$ on $M$, real numbers $\alpha <\alpha'< 1 < \beta'<\beta$ and $C>0$ such that:
    \begin{enumerate}
        \item $TM = E^{s} \oplus E^{\hat{c}} \oplus  \mathbb R X \oplus E^{u}$ ;
        \item For every $t\geq 0$:
        \label{eq:ano}
            \begin{alignat*}{2}
                &\forall v \in E^{s}, \quad &&\|d\varphi^t(v)\| \leq C\alpha^t\|v\|,\\
                &\forall v \in E^{u}, \quad &C^{-1}\beta^{t}\|v\| \leq{} &\|d\varphi^{t}(v)\|,\\
                &\forall v \in E^{\hat{c}}, \quad &C^{-1}\alpha'^{t}\|v\| \leq{} &\|d\varphi^{t}(v)\| \leq C\beta'^{t}\|v\| .
            \end{alignat*}
    \end{enumerate}
    The vector bundles $E^{s}$, $E^{\hat{c}}$ and $E^{u}$ are called respectively the \emph{strong stable, subcenter, \emph{and} strong unstable distributions} of $(\varphi^t)$, while $\mathbb R X \oplus E^{s}$, $E^c:=\mathbb R X \oplus E^{\hat{c}}$ and $\mathbb R X \oplus E^{u}$ are called respectively the \emph{weak stable, center \emph{and} weak unstable distributions} of $(\varphi^t)$.
\end{definition}
Remark that for every $x \in M$ and $t\geq 0$, $d_x\varphi^t(X(x))=X(\varphi^t(x))$, so if we note $C_1,C_2>0$ such that for every $x \in M$, $C_1 \leq \|X(x)\|\leq C_2$ (which exist since $M$ is compact and $X$ is continuous nowhere vanishing), it comes that:
$$\forall v \in \mathbb RX(x), \quad \frac{C_1}{C_2}\|v\| \leq{} \|d_x\varphi^{t}(v)\| \leq\frac{C_2}{C_1}\|v\| .$$
In particular, the time-$t$ map of a partially hyperbolic flow is a partially hyperbolic diffeomorphism whose stable, center and unstable distributions are $E^s, E^c$ and $E^u$ respectively.\\
This definition recovers that of an \textit{Anosov flow}, that is when the subcenter bundle $E^{\hat{c}}$ is trivial.\\
Via a change of Riemannian metric we can assume that $C = 1$ (see \cite{barreira_nonuniform_2007}).

Our definition of partially hyperbolic flow is identical to that of \cite{wang_quasi-shadowing_2023}, or \cite{carneiro_partially_2014}, and has the advantage of considering explicitly a subbundle of $TM$ which is transverse to the vector field inducing the flow. 
Therefore, one of the main techniques used in order to study a flow - which is assuming a transverse structure invariant by flow holonomies - can be very efficient: in \cite{}, we study partially hyperbolic transversely holomorphic flows on $7$-dimensional manifolds whose subcenter distribution is integrable to a flow-invariant foliation with $C^1$ leaves and trivial holonomy and we prove that their study is analogous of that of transversely holomorphic Anosov flows on $5$-dimensional manifolds, which are classified in \cite{abouanass_global_2025}.
Also, it is sometimes difficult to extrapolate some techniques used in the discrete case: in \cite{bohnet_partially_2011} and \cite{bohnet_partially_2014}, the authors study partially hyperbolic diffeomorphisms whose center distribution is integrable (see the next subsection) to a compact foliation with finite holonomy.
In our case, such a phenomenon is impossible as the center foliation would be subfoliated by the orbit foliation which possesses non-compact leaves (the non periodic orbits).

\subsection{Dynamical coherence}
As was discussed in \cite{wilkinson_dynamical_2008}, one can define different notions of integrability of a continuous subbundle of $TM$.
We recall some of them.
Let $E$ a continuous subbundle of dimension $k$ of the tangent bundle of a smooth manifold $M$.
$E$ is said to be \textit{integrable} if there exists a foliation $\mathcal{F}$ of $M$ by $C^1$ immersed submanifolds of dimension $k$ which are everywhere tangent to $E$.
$E$ is said to be \textit{uniquely integrable} if it is integrable to a foliation $\mathcal{F}$ and every $C^1$ path of $M$ everywhere tangent to $E$ lies in a single leaf of $\mathcal{F}$.
The notion of unique integrability is different from the fact that there exists a unique foliation tangent to $E$.
If $E$ is uniquely integrable, then there exists a unique foliation whose leaves are everywhere tangent to $E$.

The stable and unstable bundles of a partially hyperbolic flow are uniquely integrable to invariant foliations $\mathcal{F}^s$ and $\mathcal{F}^u$ respectively, whose leaves are as smooth as the flow (see \cite{hirsch_invariant_1977} or \cite{barreira_nonuniform_2007}).

\begin{definition}
    We say that a partially hyperbolic flow $(\varphi^t)$ is \textit{dynamically coherent} if there exist flow-invariant foliations $\mathcal{F}^{\hat{c}s}$ and $\mathcal{F}^{\hat{c}u}$ whose leaves are everywhere tangent to $E^{\hat{c}s}=E^{s}\oplus E^{\hat{c}}$ and $E^{\hat{c}u}=E^{u}\oplus E^{\hat{c}}$ respectively.
\end{definition}

This implies in particular, by Lemma \ref{lem:weakfol}, that the time-one map $\varphi^1$ is dynamically coherent.
Also, we can proceed as in \cite{wilkinson_dynamical_2008} to prove:

\begin{proposition}
\label{prop:dyncohsubf}
    Let $(\varphi^t)$ a partially hyperbolic flow on a smooth compact manifold which is dynamically coherent.\\
    Then the subcenter distribution is integrable to a flow-invariant foliation $\mathcal{F}^{\hat{c}}$. 
    Moreover, $\mathcal{F}^s$ and $\mathcal{F}^{\hat{c}}$ subfoliate $\mathcal{F}^{\hat{c}s}$ while $\mathcal{F}^u$ and $\mathcal{F}^{\hat{c}}$ subfoliate $\mathcal{F}^{\hat{c}u}$.
\end{proposition}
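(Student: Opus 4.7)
The plan is to adapt Proposition~2.4 of \cite{wilkinson_dynamical_2008} to the flow setting, using the flow direction to restore transversality where needed. I would first establish that $\mathcal F^s$ subfoliates $\mathcal F^{\hat c s}$, and dually $\mathcal F^u$ subfoliates $\mathcal F^{\hat c u}$. The key observation is that in any foliated chart $(U,\psi)$ for the integral foliation $\mathcal F^{\hat c s}$, the pushforward $d\psi$ sends the continuous tangent subbundle $T\mathcal F^{\hat c s} = E^{\hat c s}$ onto the horizontal subbundle of the chart. Any $C^1$ path with velocity pointwise in $E^s \subset E^{\hat c s}$ therefore has horizontal velocity in chart coordinates, so it stays inside a single plaque of $\mathcal F^{\hat c s}$. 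Covering any compact piece of a strong stable leaf by finitely many such charts and chaining, this gives $\mathcal F^s(x) \subset \mathcal F^{\hat c s}(x)$ for all $x \in M$, and dually $\mathcal F^u(x) \subset \mathcal F^{\hat c u}(x)$.

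By Lemma~\ref{lem:weakfol}, the time-one map $\varphi^1$ is itself dynamically coherent, and the weak foliations $\mathcal F^{cs}$ and $\mathcal F^{cu}$ of $\varphi^1$ can be taken to be the flow-saturations of $\mathcal F^{\hat c s}$ and $\mathcal F^{\hat c u}$, which are flow-invariant and tangent to $E^{cs} = E^{\hat c s} \oplus \mathbb R X$ and $E^{cu} = E^{\hat c u} \oplus \mathbb R X$ respectively. I would then define
\[
\mathcal F^{\hat c}(x) := \text{connected component of } x \text{ in } \mathcal F^{\hat c s}(x) \cap \mathcal F^{cu}(x).
\]
At the tangent level, the direct-sum decomposition $E^s \oplus E^{\hat c} \oplus \mathbb R X \oplus E^u$ yields $E^{\hat c s} \cap E^{cu} = E^{\hat c}$, and crucially $E^{\hat c s} + E^{cu} = TM$, so $\mathcal F^{\hat c s}$ and $\mathcal F^{cu}$ are now genuinely transverse. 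A standard simultaneous-trivialization argument in the $C^0$ category then produces, around each $x$, a distinguished chart in which the plaques of $\mathcal F^{\hat c s}$ and $\mathcal F^{cu}$ appear as continuous graphs over complementary factors; their intersection is a continuous graph of dimension $\dim E^{\hat c}$, and these assemble into the $\mathcal F^{\hat c}$-plaques. Flow-invariance of $\mathcal F^{\hat c}$ is inherited from the two intersected foliations, and by the symmetric argument $\mathcal F^{\hat c}$ also equals $\mathcal F^{cs} \cap \mathcal F^{\hat c u}$, which yields the companion subfoliation in $\mathcal F^{\hat c u}$; the subfoliation of $\mathcal F^s$ and $\mathcal F^{\hat c}$ inside $\mathcal F^{\hat c s}$ is then read off directly from the distinguished chart.

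The main obstacle is this simultaneous-trivialization step, which upgrades the transverse intersection to an integral foliation. Even with the transversality $E^{\hat c s} + E^{cu} = TM$ in hand, one must construct around each $x$ a chart in which the plaques of both foliations become continuous graphs over the appropriate factors of the full splitting $TM = E^s \oplus E^{\hat c} \oplus \mathbb R X \oplus E^u$; the first step is essential here, as it allows us to split each $\mathcal F^{\hat c s}$-plaque internally into its strong stable direction and the transverse $\mathcal F^{\hat c}$-direction, and dually for $\mathcal F^{cu}$. Verifying continuity of the resulting intersection graphs in the transverse parameters and gluing across overlapping charts is the technical heart of the argument, and mirrors the discrete-case treatment in \cite{wilkinson_dynamical_2008}.
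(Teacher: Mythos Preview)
Your first step---showing $\mathcal F^s$ subfoliates $\mathcal F^{\hat c s}$ via a chart argument---has a genuine gap. A foliated chart $\psi$ for the integral foliation $\mathcal F^{\hat c s}$ is only $C^0$ in the transverse direction (it is $C^1$ only along individual plaques), so $d\psi$ is not defined on all of $TM$ and you cannot conclude that a $C^1$ path tangent to $E^s \subset E^{\hat c s}$ has ``horizontal velocity in chart coordinates.'' This is precisely the gap between integrability and \emph{unique} integrability of $E^{\hat c s}$: a $C^1$ curve tangent to the continuous bundle $E^{\hat c s}$ can escape the $\mathcal F^{\hat c s}$-leaf through its initial point. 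The paper instead runs the dynamical argument of Lemma~2.5 in \cite{wilkinson_dynamical_2008}: for a $C^1$ disk $D$ tangent to $E^{\hat c u}$ one forms the disjoint union of its $\varphi^1$-iterates, on which $\varphi^1$ acts as a partially hyperbolic diffeomorphism with trivial stable bundle and center bundle $E^{\hat c}$, and the strong unstable manifold theorem then yields the $\mathcal F^u$-subfoliation of $D$. The hyperbolic dynamics is genuinely needed here and cannot be replaced by a static tangency argument.

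Your intersection step is in one respect cleaner than the paper's: you intersect $\mathcal F^{\hat c s}$ with the flow-saturated $\mathcal F^{cu}$, which are transverse in $M$, whereas the paper intersects $\mathcal F^{\hat c s}$ and $\mathcal F^{\hat c u}$, whose tangent sum misses $\mathbb R X$. Two caveats remain. First, your claim that the symmetric construction $\mathcal F^{cs} \cap \mathcal F^{\hat c u}$ gives the \emph{same} subcenter foliation is not automatic, since distinct integral foliations can share a continuous tangent bundle. Second, to conclude that $\mathcal F^{\hat c}$ subfoliates $\mathcal F^{\hat c u}$ you are implicitly reusing the flawed path argument. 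Once the first step is repaired via the dynamical lemma, a clean route is to work inside a single $\mathcal F^{cu}$-plaque: the adapted chart produced in the proof of Lemma~\ref{lem:weakfol} is $C^1$ along each such plaque and straightens the $\mathcal F^{\hat c u}$-plaques to horizontal slices, so there the path argument becomes legitimate and forces $\mathcal F^{\hat c}(x)\subset\mathcal F^{\hat c u}(x)$.
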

\begin{proof}
    The proof is identical to that of Proposition 2.4 of \cite{wilkinson_dynamical_2008}.
    We first prove that any disk tangent to $E^{\hat{c}u}$ is subfoliated by $\mathcal{F}^{u}$-plaques.
    Let $D$ such a disk and $X$ the disjoint union of all the $\varphi^1$-iterates of $D$, called the leaves of $X$.
    On the manifold $X$ there is a partially hyperbolic diffeomorphism $F$ whose stable bundle is trivial and whose center bundle, when restricted to a leaf $L$ of $X$, is the restriction to $L$ of the subcenter bundle of $(\varphi^t)$.
    We conclude as in Lemma 2.5 and finish the proof by intersecting the leaves of $\mathcal{F}^{\hat{c}u}$ and $\mathcal{F}^{\hat{c}s}$ to obtain $\mathcal{F}^{\hat{c}}$, which is flow-invariant since $\mathcal{F}^{\hat{c}s}$ and $\mathcal{F}^{\hat{c}u}$ are both flow-invariant.
\end{proof}
\begin{definition}
For a partially hyperbolic flow $(\varphi^t)$ whose subcenter distribution is integrable to a foliation $\mathcal{F}^{\hat{c}}$, we say that its subcenter foliation is \textit{complete} if for every $x \in M$ and $* \in \{s,u\}$, $\bigcup_{z \in \mathcal{F}^{\hat{c}}(x)} \mathcal{F}^{*}(z)=\bigcup_{w \in \mathcal{F}^{*}(x)} \mathcal{F}^{\hat{c}}(w)$.
\end{definition}
Again, as we will see later, if the subcenter foliation of $(\varphi^t)$ is complete, then the time-one map $\varphi^1$ has a complete center foliation and is thus dynamically coherent.

\subsection{Global $su\Phi$-holonomy maps}

If $(\varphi^t)$ is a dynamically coherent partially hyperbolic flow, then each leaf of $\mathcal{F}^{\hat{c}s}$ is simultaneously subfoliated by $\mathcal{F}^{\hat{c}}$ and $\mathcal{F}^{s}$.
In particular, for any $x\in M$, we can define the holonomy of the continuous foliation $\mathcal{F}^s|_{\mathcal{F}^{\hat{c}s}(x)}$ of $\mathcal{F}^{\hat{c}s}(x)$, which we call the local stable holonomy. 
For $y \in \mathcal{F}^s(x)$, since stable leaves are contractible, the local stable holonomy of a path $\gamma$ in $\mathcal{F}^s$ between $x$ and $y$ with respect to $\mathcal{F}_{\text{loc}}^{\hat{c}}(x)$ and $\mathcal{F}_{\text{loc}}^{\hat{c}}(y)$ does not depend on $\gamma$.

\begin{definition}
    A dynamically coherent partially hyperbolic flow $(\varphi^t)$ admits \emph{global stable holonomy maps} if for any $x\in M$, $y\in \mathcal{F}^{s}(x)$, there exists a globally defined homeomorphism $h_{xy}^{s}: \mathcal{F}^{ \hat{c}}(x)\to \mathcal{F}^{\hat{c}}(y)$ such that for every $z\in \mathcal{F}^{\hat{c}}(x)$, $h_{xy}^{s}(z)\in \mathcal{F}^{s}(z)\cap \mathcal{F}^{\hat{c}}(y)$.\\
    Similarly, we can define global unstable holonomy maps $h^{u}$ and global flow holonomy maps $h^\Phi$.\\
    We say that $(\varphi^t)$ admits \emph{global $su\Phi$-holonomy maps} if it admits global stable, global unstable and global flow holonomy maps.
\end{definition}
Since global holonomy maps coincide locally with local holonomy maps, we use $h_{xy}^{s}$ to denote both local holonomy maps and global holonomy maps. 
If $\mathcal{F}^{\hat{c}}$ is invariant by the flow, then the latter admits global flow holonomy maps as we can define, for $x \in M$ and $y =\varphi^{t_0}(x) \in \Phi(x)$, the homeomorphism $z \in \mathcal{F}^{\hat{c}}(x) \mapsto \varphi^{t_0}(z) \in \mathcal{F}^{\hat{c}}(y)$ which coincides locally with local flow holonomy maps.
In that respect, by analogy with the discrete case, we can prove:

\begin{lemma}
\label{lem:subleavhomeo}
    If $(\varphi^t)$ admits global $su\Phi$-holonomy maps, then all subcenter leaves are homeomorphic. 
\end{lemma}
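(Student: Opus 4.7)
I would prove this by a standard connectedness/accessibility argument. For a fixed $x \in M$, define
\[ A_x := \{\, y \in M : \mathcal{F}^{\hat{c}}(y) \text{ is homeomorphic to } \mathcal{F}^{\hat{c}}(x) \,\}. \]
The goal is to show $A_x = M$; since homeomorphism is an equivalence relation, it suffices to prove $A_x$ is open, because then its complement $M\setminus A_x$ is a union of sets of the form $A_y$ with $y\notin A_x$ and hence also open, so $A_x$ is clopen, and connectedness of $M$ forces $A_x = M$.

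\textbf{Step 1: moving along a single foliation preserves the homeomorphism class.} By hypothesis, for any $x'\in M$ and any $y'\in \mathcal{F}^s(x')$, $\mathcal{F}^u(x')$, or $\Phi(x')$, the global holonomy map $h^s_{x'y'}$, $h^u_{x'y'}$, or $h^\Phi_{x'y'}$ is a homeomorphism $\mathcal{F}^{\hat{c}}(x')\to \mathcal{F}^{\hat{c}}(y')$. Moreover if $y'\in\mathcal{F}^{\hat{c}}(x')$ then $\mathcal{F}^{\hat{c}}(y')=\mathcal{F}^{\hat{c}}(x')$ trivially. Thus, if $y$ can be joined to $x$ by a finite concatenation of arcs each contained in a leaf of $\mathcal{F}^s$, $\mathcal{F}^u$, $\Phi$, or $\mathcal{F}^{\hat{c}}$, then $y\in A_x$ via the composition of the corresponding holonomies.

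\textbf{Step 2: local product structure gives openness.} Since global $su\Phi$-holonomy presupposes dynamical coherence, Proposition~\ref{prop:dyncohsubf} yields a flow-invariant subcenter foliation $\mathcal{F}^{\hat{c}}$, and $\mathcal{F}^s,\mathcal{F}^{\hat{c}}$ subfoliate $\mathcal{F}^{\hat{c}s}$ while $\mathcal{F}^u,\mathcal{F}^{\hat{c}}$ subfoliate $\mathcal{F}^{\hat{c}u}$. Combined with the flow $\Phi$, which is transverse to $\mathcal{F}^{\hat{c}s}$ inside the weak stable foliation (since $E^{cs}=E^{\hat{c}s}\oplus \mathbb R X$), and with the transversality of $\mathcal{F}^{\hat{c}s}$ and $\mathcal{F}^u$ coming from the splitting $TM=E^s\oplus E^{\hat{c}}\oplus \mathbb R X\oplus E^u$, one obtains in a sufficiently small neighborhood $U$ of any point $x\in M$ a local product decomposition of $U$ as
\[ U \;\cong\; \mathcal{F}^s_{\mathrm{loc}}(x)\times\mathcal{F}^{\hat{c}}_{\mathrm{loc}}(x)\times\Phi_{\mathrm{loc}}(x)\times\mathcal{F}^u_{\mathrm{loc}}(x). \]
Consequently every $y\in U$ is reached from $x$ by a four-step concatenation of local arcs, one in each of the four foliations. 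By Step~1 this gives $y\in A_x$, so $A_x\supset U$ is open.

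\textbf{Step 3: conclusion.} Openness of all $A_y$'s plus connectedness of $M$ imply $A_x=M$, i.e.\ every subcenter leaf is homeomorphic to $\mathcal{F}^{\hat{c}}(x)$.

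\textbf{Main obstacle.} The only non-formal point is the local product structure of the four foliations in Step~2: one must verify that in a small foliated chart the local leaves of $\mathcal{F}^s$, $\mathcal{F}^{\hat{c}}$, $\Phi$ and $\mathcal{F}^u$ intersect transversally in single points so that one can split any short displacement into these four components. This is a routine consequence of the transversality of the four subbundles and the (local) integrability of each of them provided by dynamical coherence, but it is the step that requires care. Once it is in hand, the rest of the argument is purely a connectedness/equivalence-relation argument.
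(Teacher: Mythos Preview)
Your proposal is correct and follows essentially the same approach as the paper: show that the set of points whose subcenter leaf is homeomorphic to $\mathcal{F}^{\hat c}(x)$ is open, using that any nearby $y$ can be reached from $x$ by a concatenation of arcs in $\mathcal{F}^{\hat c}$, $\mathcal{F}^s$, $\mathcal{F}^u$ and $\Phi$, and then conclude by connectedness of $M$. The only cosmetic difference is that the paper, instead of asserting a full four-way product chart, constructs the neighborhood more concretely by first using that $\mathcal{F}^s$ subfoliates $\mathcal{F}^{\hat c s}$ (Proposition~\ref{prop:dyncohsubf}) to build $\mathcal{F}^{\hat c s}_{\mathrm{loc}}(x)$, and then sweeping it by local weak-unstable leaves $\mathcal{F}^{wu}_{\mathrm{loc}}$; this yields exactly the decomposition $y\leadsto \varphi^t(y)\leadsto z\leadsto w\in\mathcal{F}^{\hat c}(x)$ that you describe in Step~1.
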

\begin{proof}
    We show that for every $x \in M$, there exists a neighborhood $U_x$ of $x$ in $M$ such that for every $y \in U_x$, $\mathcal{F}^{\hat{c}}(y)$ is homeomorphic to $\mathcal{F}^{\hat{c}}(x)$. 
    The result will follow since we can always consider a finite cover of $M$ by such neighborhoods (since $M$ is compact).
    Let a foliation chart $V$ for $\mathcal{F}^s|_{\mathcal{F}^{\hat{c}s}(x)}$ centered at $x \in \mathcal{F}^{\hat{c}s}(x)$ (which exists since $\mathcal{F}^s$ subfoliates $\mathcal{F}^{\hat{c}s}$ by Proposition \ref{prop:dyncohsubf}).
    Let $\mathcal{F}_{\text{loc}}^{\hat{c}}(x)$ a small open neighborhood of $x$ in $\mathcal{F}^{\hat{c}}(x)$ which lies in $V$.
    For every $w \in \mathcal{F}_{\text{loc}}^{\hat{c}}(x)$, denote by $P_w$ the associated $\mathcal{F}^s$ plaque.
    The set $\mathcal{F}_{\text{loc}}^{\hat{c}s}(x):=\bigcup_{w \in \mathcal{F}_{\text{loc}}^{\hat{c}}(x)}P_w$ is a small open neighborhood of $x$ in $\mathcal{F}^{\hat{c}s}(x)$.
    Therefore, $U_x:=\bigcup_{z \in \mathcal{F}_{\text{loc}}^{\hat{c}s}(x)}\mathcal{F}_{\text{loc}}^{wu}(z)$ is a small open neighborhood of $x$ in $M$.
    If $y \in U_x$, there exist  $w \in \mathcal{F}_{\text{loc}}^{\hat{c}}(x)$, $z \in P_w$ and a small number $t$ such that $\varphi^t(y) \in \mathcal{F}^{u}_{\text{loc}}(z)$.
    By assumption, the global flow holonomy map gives a homeomorphism between $\mathcal{F}^{\hat{c}}(y)$ and $\mathcal{F}^{\hat{c}}(\varphi^t(y))$ ; the global unstable holonomy map gives a homeomorphism between $\mathcal{F}^{\hat{c}}(\varphi^t(y))$ and $\mathcal{F}^{\hat{c}}(z)$ and the global stable holonomy map gives a homeomorphism between $\mathcal{F}^{\hat{c}}(z)$ and $\mathcal{F}^{\hat{c}}(w)=\mathcal{F}^{\hat{c}}(x)$, which gives the result.
\end{proof}

\begin{lemma}
\label{lem:admitglobhol}
    Let $(\varphi^t)$ a partially hyperbolic flow on a smooth compact manifold $M$, with a flow invariant subcenter foliation $\mathcal{F}^{\hat{c}}$ with $C^1$-leaves and trivial holonomy.\\
    Then $(\varphi^t)$ admits global $su\Phi$-holonomies.
\end{lemma}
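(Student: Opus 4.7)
The flow-holonomy direction has already been handled in the text via flow-invariance of $\mathcal{F}^{\hat{c}}$, so the real content is to construct global stable (and, symmetrically, unstable) holonomy maps. The plan is to use the dynamics to reduce to a local holonomy problem, and then exploit the trivial holonomy of $\mathcal{F}^{\hat{c}}$ to globalize.

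First, using compactness of $M$ together with the fact that $\mathcal{F}^{s}$ subfoliates $\mathcal{F}^{\hat{c}s}$ (Proposition \ref{prop:dyncohsubf}), I would fix a uniform constant $\delta>0$ such that, whenever $y'\in \mathcal{F}^{s}(x')$ with $d(x',y')<\delta$, the local stable holonomy $h^{s}_{x'y'}$ is defined on a uniform-size neighborhood of $x'$ in $\mathcal{F}^{\hat{c}}(x')$ and takes values in $\mathcal{F}^{\hat{c}}(y')$. Given $y \in \mathcal{F}^{s}(x)$ and $z \in \mathcal{F}^{\hat{c}}(x)$, I would then choose $T>0$ large enough that $d(\varphi^{T}(x), \varphi^{T}(y)) < \delta$, using uniform contraction along $E^{s}$. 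By flow-invariance $\varphi^{T}(z)\in \mathcal{F}^{\hat{c}}(\varphi^{T}(x))$. Fix a path $\gamma$ from $\varphi^{T}(x)$ to $\varphi^{T}(z)$ inside $\mathcal{F}^{\hat{c}}(\varphi^{T}(x))$ and cover it by a finite chain of $\mathcal{F}^{\hat{c}}$-foliation charts. At each chart the local stable holonomy produces a segment inside a nearby subcenter leaf; composition of these segments yields a candidate image for $\varphi^{T}(z)$. I would then set
\[ h^{s}_{xy}(z) := \varphi^{-T}\bigl(h^{s}_{\varphi^{T}(x),\varphi^{T}(y)}(\varphi^{T}(z))\bigr) \]
and verify that this definition is independent of $T$ by a direct semigroup computation against the flow-holonomy identity.

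The main obstacle is the globalization step: one must ensure that the chain of local stable holonomies assembles into a single map whose image lies in $\mathcal{F}^{\hat{c}}(\varphi^{T}(y))$ and whose endpoint is independent of the path $\gamma$ and of the chain of charts covering it. This is precisely what trivial holonomy of $\mathcal{F}^{\hat{c}}$ is designed to provide: transport around any loop in $\mathcal{F}^{\hat{c}}(\varphi^{T}(x))$ fixes the nearby transversal pointwise (using the local model of Remark \ref{rem:hol} together with generalized Reeb stability, Theorem \ref{thm:reeb}), which forces the lifted stable-holonomy segments to glue into a single well-defined path inside $\mathcal{F}^{\hat{c}}(\varphi^{T}(y))$. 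Once the global map is in hand, continuity follows from continuity of the local stable holonomies and the finite-chain construction, while the homeomorphism property is obtained by observing that $h^{s}_{yx}$ provides an explicit inverse. The unstable case is strictly symmetric.
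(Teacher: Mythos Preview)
The paper does not actually prove this lemma: its proof consists of the remark that global flow holonomy is already available and a citation of \cite{avila_absolute_2022}, Lemma 3.5, for the stable and unstable cases ``analogous to the discrete case''. Your sketch is a correct and complete reconstruction of the standard argument behind that citation---iterate forward until the stable pair is $\delta$-close, continue the local $\mathcal{F}^{s}$-holonomy along a subcenter path via a chain of foliated charts, use trivial holonomy of $\mathcal{F}^{\hat{c}}$ to make the continuation path-independent, and pull back by $\varphi^{-T}$. So your approach is the one the paper intends, only made explicit.

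One organizational caveat worth noting: your first step invokes Proposition~\ref{prop:dyncohsubf} (that $\mathcal{F}^{s}$ subfoliates $\mathcal{F}^{\hat{c}s}$), whose hypothesis is dynamical coherence. The statement of Lemma~\ref{lem:admitglobhol} does not literally assume this; however, the paper's introduction describes the lemma as being about dynamically coherent flows, and the very definition of global $su\Phi$-holonomy in the paper is formulated only for dynamically coherent flows, so the hypothesis is implicit. If you want the argument to stand under the hypotheses exactly as written, you should replace the appeal to Proposition~\ref{prop:dyncohsubf} by a direct local-product-structure statement for the transverse pair $(\mathcal{F}^{s},\mathcal{F}^{\hat{c}})$, which suffices for the chain-of-charts continuation without ever naming $\mathcal{F}^{\hat{c}s}$.
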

\begin{proof}
    As we mentioned right before, $(\varphi^t)$ already admits global flow holonomy maps.
    The proof that it admits both global stable and global unstable holonomy maps is analogous to the discrete case and can be found in \cite{avila_absolute_2022}, Lemma 3.5.
\end{proof}

\subsection{Subcenter bunching}
\begin{definition}
    Let $(\varphi^t)$ a smooth partially hyperbolic flow on a smooth compact manifold $M$ such that there exists a flow-invariant subcenter foliation $\mathcal{F}^{\hat{c}}$ with $C^1$-leaves.\\
    We say that $(\phi^t)$ is \textit{subcenter bunched} if there exists $t_0>0$ such that for every $p \in M$,
    \begin{align*}
        \left\|\restr{d_p\varphi^{t_0}}{E^{s}(p)} \right\| \cdot&\left\|\restr{d_p\varphi^{t_0}}{E^{\hat{c}}(p)} \right\| < m\left( \restr{d_p\varphi^{t_0}}{E^{\hat{c}}(p)} \right)\\
        \left\|\restr{d_p\varphi^{t_0}}{E^{\hat{c}}(p)} \right\|<&\;m\left( \restr{d_p\varphi^{t_0}}{E^{\hat{c}}(p)} \right)\cdot m\left( \restr{d_p\varphi^{t_0}}{E^{u}(p)} \right).
    \end{align*}
\end{definition}
Again, by analogy with the discrete case, we prove:

\begin{proposition}
\label{prop:subunchC1}
     Let $(\varphi^t)$ a smooth partially hyperbolic dynamically coherent flow which is subcenter bunched.\\
     Then for every $p \in M$, the restriction $\restr{\mathcal{F}^u}{\mathcal{F}^{\hat{c}u}(p)}$ to $\mathcal{F}^{\hat{c}u}(p)$ of the strong unstable foliation (respectively the restriction $\restr{\mathcal{F}^s}{\mathcal{F}^{\hat{c}s}(p)}$ to $\mathcal{F}^{\hat{c}s}(p)$ of the strong stable foliation) is $C^1$.
\end{proposition}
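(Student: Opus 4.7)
The plan is to reduce to the discrete center-bunching / $C^1$ section theorem of Hirsch--Pugh--Shub, applied to the time-$t_0$ map $f := \varphi^{t_0}$. The key observation is that the foliation $\mathcal F^{\hat c u}$ is $f$-invariant with each leaf tangent to $E^u \oplus E^{\hat c}$, so the leafwise map $f : \mathcal F^{\hat c u}(p) \to \mathcal F^{\hat c u}(f(p))$ never sees the flow direction $\mathbb R X$. This is precisely why subcenter bunching on $E^{\hat c}$ alone, rather than full center bunching on $E^c = \mathbb R X \oplus E^{\hat c}$, is the right hypothesis: the restriction $df|_{T\mathcal F^{\hat c u}}$ is a two-piece partially hyperbolic splitting $E^u \oplus E^{\hat c}$, with $E^u$ playing the role of the unstable and $E^{\hat c}$ that of the center.

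Concretely, I would set up the leafwise graph transform as in \cite{hirsch_invariant_1977} and \cite{pugh_holder_1997} (compare the discrete adaptation in \cite{bohnet_partially_2011}): over each $\mathcal F^{\hat c u}$-leaf, consider the bundle of $(\dim E^u)$-dimensional planes in $T\mathcal F^{\hat c u}$ transverse to $E^{\hat c}$, on which $df$ acts as a fibre contraction covering $f$. The fibrewise contraction rate is controlled by $\|df|_{E^{\hat c}}\|/m(df|_{E^u})$ and the base distortion along $E^{\hat c}$ by $\|df|_{E^{\hat c}}\|/m(df|_{E^{\hat c}})$, so the second subcenter-bunching inequality $\|df|_{E^{\hat c}}\| < m(df|_{E^{\hat c}}) \cdot m(df|_{E^u})$ furnishes exactly the $C^1$-pinching hypothesis of the Hirsch--Pugh--Shub invariant section theorem (in iterated form if needed). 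The unique invariant section is $p \mapsto E^u(p)$, so it varies $C^1$ along leaves of $\mathcal F^{\hat c u}$; by the standard foliated-atlas argument this is equivalent to $\mathcal F^u$ $C^1$-subfoliating $\mathcal F^{\hat c u}$. The companion statement for $\mathcal F^s \subset \mathcal F^{\hat c s}$ follows by applying the same argument to the reversed flow $\varphi^{-t}$, which swaps the roles of $E^s$ and $E^u$ and for which the first subcenter-bunching inequality takes over as the $u$-bunching.

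The main obstacle I expect is bookkeeping: matching the subcenter-bunching inequality, stated pointwise at time $t_0$, to the precise contraction-rate hypothesis of the section theorem. If a pointwise match is not immediate, the standard remedy is to replace $t_0$ by a large multiple $N t_0$, since the flow-direction contribution to $df^N$ remains pinned in $[C_1/C_2, C_2/C_1]$ independently of $N$ and is therefore dominated by the genuine expansion and contraction rates; subcenter bunching at time $t_0$ then persists for all sufficiently large iterates of $f$, which is enough to run the section theorem.
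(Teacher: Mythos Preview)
Your proposal is correct and follows essentially the same route as the paper: reduce to the time-$t_0$ diffeomorphism, work leafwise on $\mathcal F^{\hat c u}$ so that only the splitting $E^u \oplus E^{\hat c}$ is seen, and apply the $C^1$ invariant section theorem (the paper cites \cite{pugh_holder_1997}, Theorem B) with fibre contraction $\|d\varphi^{t_0}|_{E^{\hat c}}\| \cdot m(d\varphi^{t_0}|_{E^u})^{-1}$ against base conorm $m(d\varphi^{t_0}|_{E^{\hat c}})$, which is exactly the second subcenter-bunching inequality. Your explicit remark that the flow direction drops out of the leafwise derivative, and hence that subcenter rather than full center bunching is the correct hypothesis, is the point the paper leaves implicit.
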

\begin{proof}
    The proof is identical to that of \cite{pugh_holder_1997} Theorem B by considering the partially hyperbolic diffeomorphism $\varphi^{t_0}: M \to M$ where $t_0$ is given by the subcenter-bunching assumption.
    The only difference comes at the end. In page 539, the base space is $\mathcal{F}^{\hat{c}u}(p)$.
    Therefore, for $z \in \mathcal{F}^{\hat{c}u}(p)$, the base is contracted by $$m(\left(d_z\varphi^{t_0}|_{E^{cs}(z)}\right)|_{E^{\hat{c}u}(z)} )=m(\left(d_z\varphi^{t_0}\right)|_{E^{\hat{c}}(z)}),$$ $\varphi^{t_0}$ overflows the base, and the fiber is contracted by $$\left\|(d_z\varphi^{t_0}|_{E^{cs}(z)})|_{E^{\hat{c}u}(z)}) \right\| \cdot \left (m(\left(d_z\varphi^{t_0}|_{E^{u}(z)}\right)|_{E^{\hat{c}u}(z)} ) \right)^{-1}=\left\|\left(d_z\varphi^{t_0}\right)|_{E^{\hat{c}}(z)} \right\| \cdot \left ( m\left(d_z\varphi^{t_0}|_{E^{u}(z)}\right) \right)^{-1}.$$
    Therefore, one can apply the invariant section theorem as in \cite{pugh_holder_1997} and conclude.
\end{proof}

\section{Sketch of proof of the results}
In section \ref{sec:4}, we state and prove some fundamental general results on foliations transverse to flows which will allow us to extrapolate some of the work done in the discrete case in ours. The main result is Lemma \ref{lem:weakfol} which gives, for an invariant foliation $\mathcal{F}$ with $C^1$-leaves transverse to the direction of the flow, the existence of a foliation tangent to $T\mathcal{F}\oplus \mathbb R X$.

In section \ref{sec:5}, we prove that the center foliation is complete if the subcenter foliation is compact with trivial holonomy.
We use a result from Bonhet and Bonatti in \cite{bohnet_partially_2014} to define the local unstable projection for the time-one map $\varphi^1$ (Proposition \ref{prop:unstproj}). Thanks to the trivial holonomy of the subcenter foliation, this allows us to define the local unstable projection on the local weak stable leaves through a subcenter leaf (Lemma \ref{lem:unstprojtrivholo}).
Therefore, by the same techniques mentioned in \cite{bohnet_partially_2014}, we prove that the subcenter leaves trough a local weak stable leaf of some $x$ are contained in the local weak stable leaves through the subcenter leaf of $x$, that is if a subcenter leaf $L$ intersects the local stable leaf of $x$, then we know the Hausdorff distance between $\varphi^n(L)$ and $\mathcal{F}^{\hat{c}}(\varphi^n(x))$ tends to $0$ as $n$ goes to $+\infty$, so we get the result with the help of the local unstable projection and the fact that two points on the same local unstable leaf are uniformly expanded with the action of $\varphi^1$ (Lemma \ref{lem:DeltaUinv}).
This proves completeness of the center distribution (Corollary \ref{cor:compcent}), and thus the dynamical coherence of the diffeomorphism $\varphi^1$ (Proposition \ref{prop:dyncohcent}).

In section \ref{sec:6}, we prove the main theorem, that is the subcenter foliation is complete. 
We use the local unstable projection as well as some dynamical properties of a flow to define the local weak unstable projection (Proposition \ref{prop:weakunstproj}).
Thanks to the trivial holonomy of the subcenter foliation, this allows us to define the local weak unstable projection on the local weak stable leaves through a subcenter leaf (Lemma \ref{lem:weakunstprojtrivholo}).
However, we cannot use the same previous arguments since the time-one map $\varphi^1$ does not necessarily uniformly expand two points on the same weak unstable leaf.
We bypass this issue by using the results of the previous section which gives that the restriction to a stable center leaf of the local weak unstable projection can be expressed dynamically with the flow (Lemma \ref{lem:piwu=phi^t(piu)}).
Since the subcenter foliation is invariant by the flow, we prove that the weak unstable projection of every point in a subcenter leaf intersecting the local stable leaf of a point $x$ is equal to the identity, thanks in particular to the fact that the leaf space $M/\mathcal{F}^{\hat{c}}$ is a compact metric space with the Hausdorff metric.
This proves completeness of the center distribution (Corollary \ref{cor:compsubc}), and thus the dynamical coherence of the flow (Proposition \ref{prop:dyncohsubcen}).

Eventually, we prove the local product structure between the center-unstable and subcenter-stable foliation in the quotient leaf space $M/\mathcal{F}^{\hat{c}}$ (Proposition \ref{prop:locprodstruquotient}).

\section{General results on foliations transverse to flows}
\label{sec:4}

\begin{lemma}
\label{lem:weakfol}
    Let $(\varphi^t)_{t\in \mathbb R}$ a complete smooth flow on a smooth manifold M, induced by a nowhere vanishing vector field $X$.
    Let $\mathcal{F}$ a continuous foliation of $M$ with $C^r$ leaves $(r \geq 1)$, which is invariant under the flow $\varphi^t$ (i.e. for every $t \in \mathbb R$ and $x \in M$, $\varphi^t(\mathcal{F}_x)=\mathcal{F}_{\varphi^t(x)}$), such that $X$ is nowhere tangent to $\mathcal{F}$. \\
    Then there exists a continuous foliation $\mathcal{F}^w$ with $C^r$ leaves tangent to $T\mathcal{F} \oplus \mathbb R X$ whose leaf at $x \in M$ is $\mathcal{F}^w(x):=\bigcup_{t \in \mathbb R}\varphi^t(\mathcal{F}_x)$.
\end{lemma}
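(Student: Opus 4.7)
The plan is to straighten the flow locally via flow boxes and reduce the construction to that of a foliation on a transverse slice, then check that the resulting local pictures glue to the globally defined sets $\bigcup_{t\in\mathbb R}\varphi^t(\mathcal F_x)$.

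First, I would set $\mathcal F^w(x):=\bigcup_{t\in\mathbb R}\varphi^t(\mathcal F_x)$ and observe that flow-invariance, $\varphi^t(\mathcal F_y)=\mathcal F_{\varphi^t(y)}$, makes ``lying in the same $\mathcal F^w$-class'' an equivalence relation, so the $\mathcal F^w(x)$ partition $M$. Each such set is the image of the smooth map $(t,y)\mapsto\varphi^t(y)$ defined on $\mathbb R\times\mathcal F_x$; since $X$ is nowhere tangent to $\mathcal F$, this map has constant rank $k+1$, so $\mathcal F^w(x)$ is a $(k+1)$-dimensional $C^r$ immersed submanifold with tangent space $T_{\varphi^t(y)}\mathcal F\oplus\mathbb R X(\varphi^t(y))$ at every point.

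Next, I would build local foliation charts for $\mathcal F^w$. Around any $x\in M$, apply the flow box theorem to obtain a smooth chart $\phi\colon V\to(-\delta,\delta)\times W$ with $\phi_*X=\partial_t$ and $\phi(x)=(0,w_0)$. In these coordinates $\mathcal F|_V$ is a continuous foliation with $C^r$ leaves, transverse to $\partial_t$; shrinking $V$ if necessary, each $\mathcal F$-plaque is a $C^r$ graph over an open subset of $W$ via the projection $\pi_W\colon(t,w)\mapsto w$. Flow-invariance of $\mathcal F$ means that $t$-translation permutes $\mathcal F$-plaques: the plaques through $(t_0,w_0)$ and $(t_1,w_0)$ are translates of one another, hence have the same $\pi_W$-image. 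This allows me to define a foliation $\mathcal G$ on $W$ whose $k$-dimensional plaques are the common projections $\pi_W(P)$ of $\mathcal F$-plaques $P$, and $\mathcal G$ inherits continuity and $C^r$ leaves from $\mathcal F$ since $\pi_W$ is smooth and a diffeomorphism from each plaque onto its projection. Pulling $\mathcal G$ back through $\pi_W$ yields a continuous foliation with $C^r$ leaves on $V$ whose plaques are the cylinders $(-\delta,\delta)\times Q$ for $\mathcal G$-plaques $Q\subset W$; these are exactly the $(k+1)$-dimensional subsets swept out by $t$-translating $\mathcal F$-plaques, so they are open pieces of the global sets $\mathcal F^w(y)$ and are tangent to $T\mathcal F\oplus\mathbb R X$ everywhere. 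Since the global leaves are already fixed by the definition $\bigcup_{t}\varphi^t(\mathcal F_x)$, these local charts glue automatically into the desired continuous foliation.

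The main obstacle is the well-definedness of the projected foliation $\mathcal G$, i.e.\ verifying that the $\pi_W$-image of an $\mathcal F$-plaque depends only on its transverse $W$-coordinate and not on the $t$-level of the plaque chosen. This is exactly where the assumption that $\mathcal F$ is flow-invariant enters in an essential way, and it requires $V$ to be chosen small enough that each $\mathcal F$-leaf meets $V$ in plaques that are single graphs over open subsets of $W$, so that no leaf can wrap and create spurious intersections; once this is arranged, $t$-translation identifies any two plaques of $\mathcal F|_V$ with the same $W$-projection and the rest of the construction is a routine transfer of regularity through the smooth projection $\pi_W$.
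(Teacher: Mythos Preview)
Your argument is correct and follows a route dual to the paper's. You begin from a flow box (straightening $\Phi$) and then project $\mathcal{F}$ along the smooth fibration $\pi_W$ to obtain a foliation $\mathcal G$ on the transverse slice $W$, pulling back to get the product charts for $\mathcal{F}^w$. The paper instead begins from a $C^0$-foliated chart $(U,\psi)$ for $\mathcal F$ (straightening $\mathcal F$) and then uses the flow to define the chart explicitly: after replacing the transverse factor by a product $(-\epsilon,\epsilon)\times T$ via a holonomy map, the chart $\eta(y,t,z)=\varphi^t(\psi'^{-1}(y,0,z))$ is written down directly and checked to straighten $\mathcal F$, $\Phi$ and $\mathcal F^w$ simultaneously.

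Two remarks on what each approach buys. First, the paper's explicit chart $\eta$ yields, for free, a single atlas adapted to all three foliations $\mathcal F$, $\Phi$, $\mathcal F^w$ at once, with the stated regularity on plaques; this is recorded in the remark after the lemma and is used downstream (e.g.\ in Lemma~\ref{lem:equivmetricweak}). Your construction also gives this, but you have to observe it separately. Second, the one soft spot in your write-up is the sentence ``$\mathcal G$ inherits continuity and $C^r$ leaves from $\mathcal F$'': the $C^r$-leaves claim is clear since $\pi_W$ restricted to each $\mathcal F$-plaque is a $C^r$ diffeomorphism onto its image, but producing an actual $C^0$ \emph{foliated chart} for $\mathcal G$ requires pulling one back from an $\mathcal F$-chart, and when you unwind that you are essentially carrying out the paper's construction of $\eta$. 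This is not a gap---the step is routine---but it is where the two arguments converge rather than where yours is genuinely more elementary.
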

\begin{proof}
    Denote by $k\geq 1$ the dimension of $\mathcal{F}$. 
    Let $x \in M$ and consider the restriction $\phi_x$ to $\mathbb R\times \mathcal{F}(x)$ of the map 
    $$\phi: \left \{ \begin{array}{ccl}
         \mathbb R\times M& \to & M  \\
         (t,y)&  \mapsto & \varphi^t(y)
    \end{array} \right. \;.$$
    Since $\mathcal{F}$ is invariant under $(\varphi^t)$ and $X$ is nowhere tangent to $\mathcal{F}$, $\phi_x$ is a $C^r$ immersion.
    Therefore $\mathcal{F}^w(x)=\phi( \mathbb R\times \mathcal{F}(x))$ is a $C^r$ immersed submanifold of $M$.\\
    We now prove that the family $(\mathcal{F}^w(x))_{x \in M}$ defines a partition of $M$ by showing:
    \begin{claim}
        For every leaf $L, L'$ of $\mathcal{F}$:
    $$\phi(\mathbb R \times L) \cap \phi(\mathbb R \times L') \neq \emptyset  \iff \phi(\mathbb R \times L) = \phi(\mathbb R \times L').$$
    \end{claim}
    \begin{proof}[Proof of the claim]
    The sufficient part is immediate. We prove the necessary condition. 
    Let $t,t' \in \mathbb R$ and $(x,x') \in L \times L'$ such that $\varphi^{t}(x)=\varphi^{t'}(x')$, i.e. $x'=\varphi^{t-t'}(x)$.
    Now let $z\in L'= \mathcal{F}(x')$ and $t_0 \in \mathbb R$. 
    It comes that $$\varphi^{t_0}(z)\in \varphi^{t_0}(\mathcal{F}(\varphi^{t-t'}(x)))=\varphi^{t_0+t-t'}(\mathcal{F}(x))$$
    by flow invariance of $\mathcal{F}$.
    This proves that $\phi(\mathbb R \times L') \subset \phi(\mathbb R \times L)$.
    By symmetry we conclude.
    \end{proof}
    Denote by $\mathcal{F}^w$ the partition of $M$ by the sets $(\mathcal{F}^w(x))$. We eventually prove that $\mathcal{F}^w$ defines a continuous foliation on $M$.
    Fix $x \in M$ and consider a $C^0$-foliated chart $(U, \psi)$ for $\mathcal{F}$ centered at $x$, i.e. there exist a homeomorphism $\psi:U\to V_1 \times V_2$, where $V_1$ and $V_2$ are open subsets of $\mathbb R^k$ and $\mathbb R^{n-k}$ respectively containing $0$, and $\mathcal{F}|_U$ is sent to the horizontal foliation $V_1\times \{*\}$.\\
    Let $T \subset U$ a small smooth transversal to $\mathbb R X \oplus T\mathcal{F}$ at $x$. Let $\epsilon_0>0$ small enough so that $S:=\bigcup_{|t|<\epsilon_0} \phi^t(T)$ is a smooth transversal to $\mathcal{F}$ diffeomorphic to $\left]-\epsilon_0, \epsilon_0 \right[ \times T$.
    By restricting $U$ if necessary, we define a homeomorphism
    $$\psi'= (\text{Id}, h_0^{\psi(S),V_2})\circ\psi:U \longrightarrow V_1 \times \psi(S)=V_1 \times \left]-\epsilon_0, \epsilon_0 \right[ \times T$$ 
    where $h_0^{\psi(S),V_2}$ is the holonomy of the foliation $\psi_*(\mathcal{F|}_U)$ at $0$ between the transversals $\{0\} \times V_2 \subset \mathbb R^k \times \mathbb R^{n-k}$ and $\psi(S)$. 
    The chart $\psi'$ sends the leaves of $\mathcal{F}$ to $\{V_1\} \times \{*\}\times \{*\}$.
    Consider now, for $\epsilon<\epsilon_0$, the map
    $$\eta: \left \{ \begin{array}{ccl}
         V_1  \times \left] -\epsilon, \epsilon\right[  \times T& \to & M  \\
         (y,t,z)&  \mapsto & \varphi^t(\psi'^{-1}(y,0,z))
         \end{array} \right. \;.$$
    By taking $\epsilon$ small and restricting $V_1$ and $T$ if necessary, we can assume that $\eta$ is a homeomorphism from a neighborhood of $0 \in \mathbb R^n$ to a neighborhood of $x \in M$.
    Moreover, $\eta$ sends the foliation $ \{*\} \times \left] -\epsilon, \epsilon\right[ \times \{*\}$ to $\Phi$ and the foliation $V_1 \times \{*\}\times \{*\}$ to $\mathcal{F}$ by definition of $\psi'$ and since $\mathcal{F}$ is invariant by $(\varphi^t)$. As a consequence $\eta^{-1}$ sends the elements of the partition $\mathcal{F}^w$ to $V_1 \times \left]-\epsilon, \epsilon \right[   \times \{*\}$.\\
    This proves that $\mathcal{F}^w$ is a continuous foliation, and $\eta$ is also $C^0$ adapted to $\mathcal{F}$ and $\Phi$.
    Moreover, since the restriction of $\psi$ to a plaque of $\mathcal{F}$ is $C^r$, the restriction of $\eta$ to a plaque of $\mathcal{F}$ or $\mathcal{F}^w$ is also $C^r$, while its restriction to a plaque of $\Phi$ is $C^\infty$.
\end{proof}
\begin{remark}
\leavevmode
\begin{itemize}
    \item The proof shows that there exists a continuous atlas compatible with the three foliations $\mathcal{F}$, $\Phi$ and $\mathcal{F}^w$ simultaneously and that its restriction to each leaf of these foliations is $C^r$.
    \item If $\mathcal{F}$ is a $C^r$ foliation ($r\geq 1$), then the previous proof shows that $\mathcal{F}^w$ is also a $C^r$ foliation.
\end{itemize}
\end{remark}
If $\mathcal{F}$ is a continuous foliation with $C^1$ leaves on a smooth riemanian manifold $M$ whose metric is fixed, we denote for $x \in M$ and $\delta>0$ by $\mathcal{F}_\delta(x)$ the ball in the leaf $\mathcal{F}(x)$ centered at $x$ and of radius $\delta$ for the induced metric on $\mathcal{F}(x)$.
\begin{lemma}
\label{lem:equivmetricweak}
    Let $(\varphi^t)$ a smooth flow on a smooth compact manifold M, induced by a nowhere vanishing vector field $X$.
    Let $\mathcal{F}$ a continuous foliation of $M$ with $C^r$ leaves $(r \geq 1)$, which is invariant under the flow $\varphi^t$ and such that $X$ is nowhere tangent to $\mathcal{F}$. \\
    Then there exist positive constants $C_1<1$, $C_2>1$ and $\delta_0$ such that for every $\delta<\delta_0$ and $x \in M$, 
    $$\bigcup_{z \in  \Phi_{C_1\delta}(x)}\mathcal{F}_{C_1\delta}(z)\subset \mathcal{F}^{w}_{\delta}(x)\subset \bigcup_{z \in  \Phi_{C_2\delta}(x)}\mathcal{F}_{C_2\delta}(z)$$
    and
    $$\bigcup_{z \in \mathcal{F}_{C_1\delta}(x) }\Phi_{C_1\delta}(z)\subset \mathcal{F}^{w}_{\delta}(x)\subset \bigcup_{z \in \mathcal{F}_{C_2\delta}(x) }\Phi_{C_2\delta}(z).$$
    
\end{lemma}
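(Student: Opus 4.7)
The plan is to reduce everything to a uniform comparison, via compactness of $M$, between the intrinsic metric on an $\mathcal{F}^w$-plaque and a product metric on a parameter space, using the chart constructed in the proof of Lemma \ref{lem:weakfol}.

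Fix $x \in M$ and use the chart $\eta \colon V_1 \times {]-\epsilon,\epsilon[} \times T \to M$ centered at $x$ built in the proof of Lemma \ref{lem:weakfol}. Recall that $\eta$ is only a homeomorphism, but its restriction $\eta_{z_0} := \eta(\cdot,\cdot,z_0)$ to each $\mathcal{F}^w$-plaque (i.e.\ fixing $z_0 \in T$) is a $C^r$ embedding into $M$. Pull back the ambient Riemannian metric $g$ by $\eta_{z_0}$ to obtain a $C^{r-1}$ Riemannian metric $g_{z_0}$ on $V_1 \times {]-\epsilon,\epsilon[}$. Because $\mathcal{F}$ is a $C^0$ foliation and $X$ is smooth, the subbundle $T\mathcal{F}^w = T\mathcal{F} \oplus \mathbb R X$ is continuous, so the family $(g_{z_0})_{z_0 \in T}$ depends continuously on $z_0$. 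Moreover, inside the same chart, the $\mathcal{F}$-plaques correspond to $V_1 \times \{t\} \times \{z_0\}$ and the flow segments to $\{y\} \times {]-\epsilon,\epsilon[} \times \{z_0\}$, so that the product structure $(y,t)$ in the parameter space realizes exactly the decomposition into $\mathcal{F}$-direction and flow direction.

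Shrinking the chart to a compact subregion $\overline{V_1'} \times \overline{J'} \times \overline{T'}$ and using continuity of $(z_0,(y,t)) \mapsto g_{z_0}$, there exist constants $0 < \kappa_1 \leq \kappa_2$ such that
\[
\kappa_1 \|(v_1,v_2)\|_{\mathrm{Eucl}} \;\leq\; \|(v_1,v_2)\|_{g_{z_0}} \;\leq\; \kappa_2 \|(v_1,v_2)\|_{\mathrm{Eucl}}
\]
for every $z_0 \in T'$ and every $(y,t) \in V_1' \times J'$. I would then cover $M$ by finitely many such shrunken charts (possible by compactness), extract uniform $\kappa_1,\kappa_2$ and a uniform $\delta_0 > 0$ valid at every point of $M$. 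The same uniform comparison applied to the one-dimensional subspaces $\{0\}\times\mathbb R$ and $\mathbb R^k \times \{0\}$ gives analogous comparisons between the Euclidean coordinate-norm and the induced metrics on $\mathcal{F}$-plaques and on flow orbits separately.

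Once the uniform comparison is in place, the remainder is elementary. For $\delta < \delta_0$, the ball $\mathcal{F}^w_\delta(x)$, viewed in the parameter space of the chart, lies between the $g_{z_0}$-Euclidean balls of radii $\delta/\kappa_2$ and $\delta/\kappa_1$. Since every Euclidean ball $B^{\mathrm{Eucl}}_r$ in $\mathbb R^k \times \mathbb R$ satisfies
\[
B^y_{r/\sqrt{2}} \times B^t_{r/\sqrt{2}} \;\subset\; B^{\mathrm{Eucl}}_r \;\subset\; B^y_r \times B^t_r,
\]
and since the product boxes correspond, through $\eta$ and through the uniform comparison above, to the two unions $\bigcup_{z \in \Phi_{c\delta}(x)} \mathcal{F}_{c\delta}(z)$ and $\bigcup_{z \in \mathcal{F}_{c\delta}(x)} \Phi_{c\delta}(z)$ for appropriate $c$, one obtains the two sandwich inclusions with constants $C_1, C_2$ depending only on $\kappa_1,\kappa_2$ and $\sqrt{2}$.

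The main, but rather mild, obstacle is the uniformity: constants must not depend on $x$. This is handled exactly as above by a finite cover of $M$. A secondary point requiring care is that $\eta$ itself is only continuous, so the pullback of $g$ must always be done leafwise (to $C^{r-1}$ metrics $g_{z_0}$ on plaques), and continuity across plaques must be invoked separately using the continuity of $T\mathcal{F}^w$; after that, the problem becomes a textbook comparison between a Riemannian ball and a Euclidean product box.
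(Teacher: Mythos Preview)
Your proposal is correct and follows essentially the same approach as the paper: both use the chart $\eta$ from Lemma~\ref{lem:weakfol}, pull back the ambient metric leafwise to each $\mathcal{F}^w$-plaque, compare it with a product-type metric on $V_1\times\left]-\epsilon,\epsilon\right[$ via continuity of $T\mathcal{F}^w$ and compactness of a shrunken chart, and then pass to a finite cover of $M$ to obtain uniform constants. The only cosmetic difference is that the paper takes as intermediate metric the product of the induced metrics on the $\mathcal{F}$-plaque and the flow orbit through the center, whereas you use the Euclidean metric on the parameter space and then separately compare it with the leaf/orbit metrics; both choices lead to the same conclusion.
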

\begin{proof}
    Consider a continuous atlas of charts $(U_i,\psi_i)_i$ for $\mathcal{F}^w$ given by the previous lemma, centered at some points $x_i$. 
    Write $\psi_i:U_i \to V_i \times \left]-\epsilon_i, \epsilon_i\right[ \times T_i$ where $V_i$ and $T_i$ are open subsets of $\mathbb R^k$ and $\mathbb R^{n-k-1}$ respectively, containing $0$. The restriction of $\psi_i$ to a plaque of $\Phi$ is a $C^1$ diffeomorphism onto some $\{y_0\} \times \left]-\epsilon_i \times , \epsilon_i\right[ \times \{z_0\}$, its restriction to a plaque of $\mathcal{F}$ is a $C^1$ diffeomorphism onto some $V_i \times \{t_0\}\times \{z_0\}$ and its restriction to a plaque of $\mathcal{F}^w$ is a $C^1$ diffeomorphism onto some $V_i \times \left]-\epsilon_i \times , \epsilon_i\right[ \times \{z_0\}$.
    We define two smooth metrics on $V_i \times \left]-\epsilon_i, \epsilon_i\right[ $. 
    The first metric $\|\cdot\|^i_1$ is obtained by pushing forward to $V_i \times \left]-\epsilon_i, \epsilon_i\right[ $, thanks to $\psi_i$, the metric on the open subset $\psi_i^{-1}(V_i \times \left]-\epsilon_i, \epsilon_i\right[ \times \{0\})$ of $\mathcal{F}^w(x_i)$.
    As for the second metric $\|\cdot\|^i_2$, by construction $\psi_i$ is a $C^1$ diffeomorphism between $\left]-\epsilon_i, \epsilon_i\right[ $ and an open neighborhood of $x_i$ in $\Phi(x_i)$, and also between $V_i$ and an open neighborhood of $x_i$ in $\mathcal{F}(x_i)$.
    This allows to define a metric on $\left]-\epsilon_i, \epsilon_i\right[$ and on $V_i$ and therefore on $V_i \times \left]-\epsilon_i, \epsilon_i\right[ $ by product.
    These two metrics can naturally be defined on $V_i \times \left]-\epsilon_i, \epsilon_i\right[ \times \{z_i\}$ for every $z_i \in T_i$.
    Also, we can consider a smaller foliated chart $(\psi_i',U_i')$ centered at $x_i$ whose closure is compact. Therefore we can assume $U_i'=U_i$. 
    By compactness of $\overline{U_i}$, continuity of $T\mathcal{F}$ and equivalence of metrics, there exists $c_i>0$ such that for every $z_i \in T_i$ and $v\in T(V_i \times \left]-\epsilon_i, \epsilon_i\right[ )$:
    $$\frac{1}{c_i}\|v\|_{2,z_i}^i \leq \|v\|_{1,z_i}^i\leq c_i\|v\|_{2,z_i}^i. $$
    It comes that there exist positive constants $C_{i,1}<1, C_{i,2}>1$ and $\delta_i>0$ such that for every $x \in U_i$ and every $\delta<\delta_i$:
    $$\bigcup_{z \in  \Phi_{C_{i,1}\delta}(x)}\mathcal{F}_{C_{i,1}\delta}(z)\subset \mathcal{F}^{w}_{\delta}(x)\subset \bigcup_{z \in  \Phi_{C_{i,2}\delta}(x)}\mathcal{F}_{C_{i,2}\delta}(z)$$
    and 
    $$\bigcup_{z \in  \mathcal{F}_{C_{i,1}\delta}(x)}\Phi_{C_{i,1}\delta}(z)\subset \mathcal{F}^{w}_{\delta}(x)\subset \bigcup_{z \in  \mathcal{F}_{C_{i,2}\delta}(x)}\Phi_{C_{i,2}\delta}(z).$$
    Since $M$ is compact, we can consider a finite number of charts $(U_i, \psi_i)$, so $\delta_0=\min_i(\delta_i)$, $C_1=\min_i(C_{i,1})$ and $C_2=\max_i(C_{i,2})$ are well-defined and satisfy the conclusion of the Lemma.
\end{proof}
We can use the same reasoning to prove the following Lemma.
\begin{lemma}
\label{lem:equivmetricflo}
    Let $(\varphi^t)$ a smooth flow on a smooth compact manifold M, induced by a nowhere vanishing vector field $X$.\\
    Then there exist positive constants $C_1<1$, $C_2>1$ and $\delta_0$ such that for every $\delta<\delta_0$ and $x \in M$, 
    $$\bigcup_{t \in  \left]-C_1\delta, C_1\delta \right[}\varphi^t(x)\subset \Phi_\delta(x)\subset \bigcup_{t \in  \left]-C_2\delta, C_2\delta \right[}\varphi^t(x).$$
\end{lemma}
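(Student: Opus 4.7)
The plan is to mirror the proof of Lemma \ref{lem:equivmetricweak}, with the orbit foliation $\Phi$ itself as the ambient one-dimensional foliation and the flow parametrization playing the role of the auxiliary smooth structure on each orbit arc.

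First, I would use the flow-box theorem at every point together with compactness of $M$ to obtain a \emph{finite} atlas of flow-box charts $\psi_i: U_i \to (-\epsilon_i,\epsilon_i) \times T_i$, where $T_i$ is a smooth transversal to $\mathbb R X$ at some base point $x_i$ and $\psi_i$ conjugates $\varphi^t$ to translation $(s,z)\mapsto(s+t,z)$ in the first coordinate. On each plaque $(-\epsilon_i,\epsilon_i)\times\{z\}$ the first coordinate is the flow parameter along the orbit through $\psi_i^{-1}(0,z)$, so two natural smooth metrics present themselves: the Euclidean one in $t$, and the pullback via $\psi_i$ of the Riemannian metric induced from $M$ on the orbit plaque. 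Their ratio at time $t$ is exactly $\|X(\varphi^t(\psi_i^{-1}(0,z)))\|$, which by continuity of $X$, its nowhere-vanishing character, and compactness of $\overline{U_i}$, admits a uniform bound $c_i^{-1}\le\|X\|\le c_i$ for some $c_i>1$, valid for every $z\in T_i$ and every $t\in(-\epsilon_i,\epsilon_i)$.

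Integrating this pointwise comparison along the orbit arc yields, on each chart, the local inclusions $\{\varphi^t(x) : |t| < \delta/c_i\} \subset \Phi_\delta(x) \subset \{\varphi^t(x) : |t| < c_i\delta\}$. Setting $C_1 := \min_i c_i^{-1} \in (0,1)$, $C_2 := \max_i c_i > 1$, and $\delta_0$ small enough relative to the flow-box half-widths $\epsilon_i$, the global statement then follows by finite covering, exactly as at the end of the proof of Lemma \ref{lem:equivmetricweak}.

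The only subtle point --- and thus the main, though mild, obstacle --- is ensuring that $\Phi_\delta(x)$ does not wrap around a short closed orbit and thereby overshoot the flow-box window around $x$. The finite cover yields a uniform $T_0 := \min_i \epsilon_i > 0$ which is a positive lower bound on the period of every closed orbit of $(\varphi^t)$ (since inside any chart, $t \mapsto \varphi^t(y)$ is injective on $(-\epsilon_i,\epsilon_i)$); taking $\delta_0 < C_1 T_0$ then guarantees that $\Phi_\delta(x)$ is an injectively parametrized arc inside a single flow-box window for every $x \in M$ and every $\delta < \delta_0$, at which point the local comparison transfers directly to the stated global inclusions.
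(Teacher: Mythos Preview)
Your proof is correct and follows essentially the same approach as the paper's own argument: the paper simply says to repeat the proof of Lemma~\ref{lem:equivmetricweak} with a $C^\infty$ flow box in place of the foliated chart, comparing the induced Riemannian metric on each orbit plaque against the flat Euclidean metric on $(-\epsilon,\epsilon)$, which is exactly what you carry out in detail. Your explicit identification of the metric ratio as $\|X\|$ and your remark on the lower bound for periods of closed orbits are refinements the paper leaves implicit, but the strategy is the same.
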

\begin{proof}
    The proof is analogous to the previous one, except that we use a $C^\infty$ flow box $(U, \psi)$ centered at $x\in M$ instead of a foliation chart ; the first metric is the push-forward of the metric in the plaque of $x$, while the second metric is the standard flat euclidean metric on $\left]-\epsilon, \epsilon \right[$
\end{proof}

The previous shows that we can define $\mathcal{F}^c:=\mathcal{F}^{w\hat{c}}$, $\mathcal{F}^{ws}$ and $\mathcal{F}^{wu}$. 
We consider common constants $C_1, C_2$ and $\delta_0$ for the foliations $\mathcal{F}^s$, $\mathcal{F}^{\hat{c}}$ and $\mathcal{F}^u$ given by Lemma \ref{lem:equivmetricweak}.

\begin{lemma}
\label{lem:transvfol}
    Let $M$ a smooth compact manifold, and $\mathcal{F}$, $\mathcal{F}'$ two continuous foliations with $C^1$-leaves on $M$ which are transverse (i.e. for every $x\in M$, $T_x\mathcal{F}_1 \cap T_x\mathcal{F}_2=\{0\}$).\\
    Then there exists $\mu>0$ such that for every $x \in M$, $\mathcal{F}_{\mu}(x)\cap \mathcal{F}_{\mu}'(x)=\{x\}$.
\end{lemma}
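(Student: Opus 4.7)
The plan is to argue by contradiction using compactness of $M$, reducing the claim to a local transversality of plaques at a single limit point, which will follow from continuity of the subbundles $T\mathcal{F}$ and $T\mathcal{F}'$ together with the implicit function theorem applied in a smooth chart of $M$.

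First I would suppose no such $\mu$ exists, obtaining for each $n\geq 1$ points $x_n \in M$ and $y_n \neq x_n$ with $y_n \in \mathcal{F}_{1/n}(x_n) \cap \mathcal{F}'_{1/n}(x_n)$. Compactness of $M$ gives, after extraction, $x_n \to x$, and since $d(x_n,y_n) \leq d^{\mathcal{F}}(x_n,y_n) < 1/n$, also $y_n \to x$. By the plaque-diameter property of integral foliations recalled right after Remark~\ref{rem:fol}, for $n$ large $x_n$ and $y_n$ lie in a common plaque $P_n$ of a fixed foliated chart for $\mathcal{F}$ at $x$, and simultaneously in a common plaque $P_n'$ of a fixed foliated chart for $\mathcal{F}'$ at $x$. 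Each $P_n$ (resp.\ $P_n'$) is a $C^1$ embedded disk through $x_n$ with tangent space $T_{x_n}\mathcal{F}$ (resp.\ $T_{x_n}\mathcal{F}'$).

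Next I would transfer the picture to a fixed smooth chart $\phi\colon U \to \mathbb{R}^n$ of $M$ centered at $x$, chosen so that $d\phi_x(T_x\mathcal{F})$ and $d\phi_x(T_x\mathcal{F}')$ are two transverse coordinate subspaces. Since $T\mathcal{F}$ and $T\mathcal{F}'$ are continuous subbundles of $TM$ and transverse at $x$, after shrinking $U$ they remain uniformly close to (and transverse with uniformly bounded-below angle to) their values at $x$ throughout $U$. In $\phi$-coordinates, every small plaque of $\mathcal{F}$ through $y\in U$ can then be written as the $C^1$ graph of a function over $d\phi_x(T_x\mathcal{F})$ whose Lipschitz constant is as small as desired, and similarly for $\mathcal{F}'$. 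The intersection $P_n\cap P_n'$ inside a small $\phi$-neighborhood of $x_n$ is therefore the solution set of a fixed-point equation $a=G_n(a)$ where $G_n$ is a contraction; hence its unique solution is $x_n$ itself, which contradicts $y_n \in P_n \cap P_n' \setminus \{x_n\}$ for $n$ large.

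The main obstacle is executing the graph-representation step rigorously: the foliation charts themselves are only $C^0$, so one cannot differentiate inside them. What rescues the argument is that each plaque is a genuine $C^1$ submanifold of $M$ (since leaves are $C^1$) and that $T\mathcal{F}$, $T\mathcal{F}'$ are continuous subbundles of the smooth bundle $TM$; this lets me work in a fixed smooth chart of $M$ and apply the $C^1$ implicit function theorem with estimates that are uniform in the basepoint $y$.
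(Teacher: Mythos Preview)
Your argument is correct and follows essentially the same route as the paper: both reduce to a local analysis in a smooth chart of $M$, using continuity of the subbundles $T\mathcal{F}$ and $T\mathcal{F}'$ to write nearby plaques as $C^1$ graphs with small Lipschitz constant over the transverse subspaces $T_x\mathcal{F}$ and $T_x\mathcal{F}'$, forcing their local intersection to be a single point. The only cosmetic difference is packaging: the paper argues directly by producing for each $x$ a neighborhood $U_x$ and a $\mu_x$ that work uniformly on $U_x$, then extracts a finite subcover, whereas you run the equivalent contradiction/extraction version of the same compactness argument.
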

\begin{proof}
    We first show that for every $x \in M$, there exists an open neighborhood $U_x$ of $x$ in $M$ and $\mu_x>0$ such that for every $y \in U_x$, $\mathcal{F}_{\mu_x}(y)\cap \mathcal{F}_{\mu_x}'(y)=\{y\}$.
    Since the properties to prove are local, we can assume $M=\mathbb R^n$, $x=0$, $T_0\mathcal{F}=\mathbb R^{k}\times \{0\}^{k'}\times \{0\}^{n-k-k'}$, $T_0\mathcal{F}'= \{0\}^{k} \times \mathbb R^{k'}\times \{0\}^{n-k-k'}$ (where $k=\dim(\mathcal{F}), k'=\dim(\mathcal{F})$).
    Since $T\mathcal{F}$ and $T\mathcal{F}'$ are continuous bundle, for every $\epsilon>0$, there exists a neighborhood $V$ of $0$ such that for every $y \in V$, the angle between $T_y\mathcal{F}$ and $\mathbb R^{k}\times \{0\}^{k'}\times \{0\}^{n-k-k'}$ and the angle between $T_y\mathcal{F}'$ and $ \{0\}^{k} \times \mathbb R^{k'}\times \{0\}^{n-k-k'}$ are less than $\epsilon$.
    For such $\epsilon>0$ fixed, there exists $\mu_\epsilon>0$ and a neighborhood $U_\epsilon$ of $0$ such that for every $y \in U$, $\mathcal{F}_{\mu}(y),\mathcal{F}'_{\mu}(y) \subset V$.
    If we choose initially $\epsilon$ small enough, then for every $y \in U_\epsilon$, $\mathcal{F}_{\mu}(y) \cap \mathcal{F}'_{\mu}(y) =\{y\}$.\\
    Since $M$ is compact, we can recover it by a finite number of such open sets $(U_i)_{i}$. 
    If we take $\mu=\min_i \mu_i>0$, then we get the result.
\end{proof}
We will let $\mu_*$ small enough given by the previous corollary for any encountered couple of continuous transverse foliations (which are in finite number).
\begin{lemma}
\label{lem:transvcompafol}
     Let $M$ a smooth compact manifold, and $\mathcal{F}$, $\mathcal{F}'$ two continuous foliations with $C^1$-leaves on $M$ which are transverse and such that the leaves of $\mathcal{F}'$ are compact.\\
    Then there exists $\mu>0$ such that for every $x \in M$, $\mathcal{F}_{\mu}(x)\cap \mathcal{F}'(x)=\{x\}$.
\end{lemma}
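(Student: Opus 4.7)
The plan is to upgrade the local conclusion of Lemma \ref{lem:transvfol} to a global statement about intersection with the whole leaf of $\mathcal{F}'$, using the compactness of the leaves of $\mathcal{F}'$ in an essential way.

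First, I would invoke Lemma \ref{lem:transvfol} applied to the transverse foliations $\mathcal{F}$ and $\mathcal{F}'$ to obtain a uniform constant $\mu_* > 0$ such that for every $x \in M$, $\mathcal{F}_{\mu_*}(x) \cap \mathcal{F}'_{\mu_*}(x) = \{x\}$. This handles the local obstruction: two points in the same leaf of $\mathcal{F}'$ which are close in both the ambient and the leaf metric must coincide, provided they are in the same $\mathcal{F}$-plaque.

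The main step is to promote closeness in the ambient metric to closeness in the leaf metric of $\mathcal{F}'$, and this is exactly where compactness of the leaves of $\mathcal{F}'$ enters. By the reformulated property recalled in the paper for integral compact foliations (just after Remark~\ref{rem:fol}), there exists $\mu_1 > 0$ such that whenever $x, y$ lie on the same leaf of $\mathcal{F}'$ and satisfy $d(x,y) < \mu_1$, we have $d^{\mathcal{F}'}(x,y) < 2\, d(x,y)$. I would then set
\[
\mu := \min\!\left(\tfrac{\mu_*}{2},\, \mu_1\right).
\]
For $x \in M$ and $y \in \mathcal{F}_\mu(x) \cap \mathcal{F}'(x)$, the bound $d(x,y) \le d^{\mathcal{F}}(x,y) \le \mu \le \mu_1$ forces $y$ to be close to $x$ in the leaf metric of $\mathcal{F}'$: $d^{\mathcal{F}'}(x,y) < 2\mu \le \mu_*$. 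Hence $y \in \mathcal{F}_{\mu_*}(x) \cap \mathcal{F}'_{\mu_*}(x)$, and Lemma \ref{lem:transvfol} yields $y = x$.

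The only subtle point I anticipate is justifying the reformulated estimate for compact integral foliations with the proper uniformity over $x$; this is precisely the content of the bullet just after Remark~\ref{rem:fol}, which relies on the fact that each compact leaf meets any foliated chart in only finitely many plaques. Once that uniform estimate is taken for granted, the rest of the argument is a clean two-scale comparison between the ambient metric and the leaf metric of $\mathcal{F}'$, combined with Lemma~\ref{lem:transvfol}.
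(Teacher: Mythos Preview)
Your proof is correct and follows essentially the same route as the paper's: first use the compactness of $\mathcal{F}'$ (via the estimate recalled after Remark~\ref{rem:fol}) to convert ambient closeness into $\mathcal{F}'$-leaf closeness, then apply Lemma~\ref{lem:transvfol}. The only difference is cosmetic—you make the constants $\mu_*$, $\mu_1$, and $\mu = \min(\mu_*/2,\mu_1)$ explicit, whereas the paper simply takes $\mu$ ``sufficiently small'' in both senses at once.
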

\begin{proof}
    Let $\mu$ small enough so that for every two points $w_1,w_2$ on the same leaf of $\mathcal{F}'$ with $d(w_1,w_2)<\mu$, then $d'(w_1,w_2)<2d(w_1,w_2)$ (which exists by Remark \ref{rem:fol}).
    Now let $x\in M$ and $y\in \mathcal{F}_{\mu}(x)\cap\mathcal{F}'(x)$. Then $d(x,y)<\mu$, so $d'(x,y)<2\mu$ by the above.
    By the previous Lemma, if $\mu$ is sufficiently small (without depending on $x,y$), then $y=x$ which proves the result.
\end{proof}
We end this section by a fundamental result on triples of transverse foliations, which is proven in \cite{bohnet_partially_2014}:
\begin{lemma}
\label{lem:bonhettriple}
    Let $\mathcal{F}_1$, $\mathcal{F}_2$, $\mathcal{F}_3$ three continuous foliations with $C^1$ leaves of a smooth compact $M$, tangent respectively to continuous distributions $E_1$, $E_2$, $E_3$ such that $TM=E_1 \oplus E_2 \oplus E_3$.\\
    Then there exist positive constants $\delta_0,C<1$ such that for every $\delta<\delta_0$ and $x,y \in M$ with $d(x,y)<C\delta$, the set 
    $$\mathcal{F}_{1,\delta}(y)\cap\bigcup_{z \in \mathcal{F}_{2,\delta}(x)}\mathcal{F}_{3, \delta}(z)$$
    is non-empty.
\end{lemma}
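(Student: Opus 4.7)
The plan is to reduce the statement to a local one via compactness of $M$, and then in a local chart to construct an iterated plaque-composition map whose image contains a uniform neighborhood of $x$; the desired intersection point is then read off as an intermediate step of the composition.

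First, I would fix $x_0 \in M$ and choose a smooth chart around $x_0$ in which $T_{x_0}\mathcal{F}_i$ is identified with the $i$-th factor of $\mathbb{R}^n = \mathbb{R}^{k_1} \times \mathbb{R}^{k_2} \times \mathbb{R}^{k_3}$, where $k_i = \dim E_i$. Because each $E_i$ is continuous and $TM = E_1 \oplus E_2 \oplus E_3$, there exist a neighborhood $U$ of $x_0$, constants $\delta_1 > 0$ and $\varepsilon > 0$ (as small as we wish by shrinking $U$) such that for every $z \in U$, every $i \in \{1,2,3\}$, and every $\delta < \delta_1$, the plaque $\mathcal{F}_{i,\delta}(z)$ is, in the chart, a $C^1$-graph over a piece of the $i$-th axis with Lipschitz constant less than $\varepsilon$.

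Second, for $x \in U$ I would define an iterated plaque-composition map
\[
\Psi_x : B_2(0,\delta) \times B_3(0,\delta) \times B_1(0,\delta) \longrightarrow M
\]
as follows: given $(v_2,v_3,v_1)$, let $z_2$ be the point of $\mathcal{F}_{2,\delta}(x)$ whose graph-parameter in the chart equals $v_2$; let $z_3$ be the point of $\mathcal{F}_{3,\delta}(z_2)$ with graph-parameter $v_3$; and let $\Psi_x(v_2,v_3,v_1)$ be the point of $\mathcal{F}_{1,\delta}(z_3)$ with graph-parameter $v_1$. The key properties are: $\Psi_x(0,0,0)=x$; $\Psi_x$ is continuous jointly in all arguments and in $x$; and in the chart $\Psi_x$ is an $O(\varepsilon)$-perturbation of $(v_2,v_3,v_1) \mapsto x + (v_1,v_2,v_3)$. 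The transversality $TM = E_1 \oplus E_2 \oplus E_3$ combined with this perturbation estimate implies that $\Psi_x$ is injective. Applying Brouwer's invariance of domain, $\Psi_x$ is an open embedding, and by the perturbation estimate its image contains a Euclidean ball of radius $(1-O(\varepsilon))\delta$ around $x$, and hence a geodesic ball $B(x, C\delta)$ for some $C < 1$ that is uniform over $x \in U$.

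Third, given any $y \in U$ with $d(x,y) < C\delta$, write $y = \Psi_x(v_2,v_3,v_1)$ with $|v_i| < \delta$, and set $w := \Psi_x(v_2,v_3,0) = z_3$. By construction $w \in \mathcal{F}_{3,\delta}(z_2)$ with $z_2 \in \mathcal{F}_{2,\delta}(x)$, so $w \in \bigcup_{z \in \mathcal{F}_{2,\delta}(x)} \mathcal{F}_{3,\delta}(z)$; and $y \in \mathcal{F}_{1,\delta}(w)$ gives $w \in \mathcal{F}_{1,\delta}(y)$, which produces the required intersection. A finite subcover of $M$ by such neighborhoods $U$ and the minimum of the corresponding constants yields uniform $\delta_0$ and $C$.

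The main obstacle is the openness and injectivity of $\Psi_x$: because the foliations are only continuous (even if their leaves are $C^1$), $\Psi_x$ is not jointly $C^1$ in $(v_2,v_3,v_1)$, so the inverse function theorem is unavailable. Injectivity and openness must therefore be derived from Brouwer's invariance of domain, which forces us to establish, uniformly in $x \in U$ and in $\delta < \delta_1$, the near-horizontal $C^1$-graph property of each plaque with small Lipschitz constant — and to push this uniformity across $M$ using compactness together with the uniform continuity of the distributions $E_i$.
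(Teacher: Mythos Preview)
The paper does not give its own proof of this lemma; it simply attributes the result to Bohnet--Bonatti \cite{bohnet_partially_2014}. So there is no in-paper argument to compare against, but your proposal can be assessed on its own.

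Your overall strategy --- working in adapted charts where each $E_i$ is nearly a coordinate factor, building the composed plaque map $\Psi_x$, showing its image covers a uniform ball around $x$, and then reading off the intersection point as the intermediate $w=z_3$ from a pre-image $y=\Psi_x(v_2,v_3,v_1)$ --- is the right one and is essentially how the cited result is obtained.

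The gap is the injectivity step. You assert that ``transversality \dots\ combined with this perturbation estimate implies that $\Psi_x$ is injective'' and then invoke invariance of domain. But the perturbation estimate you actually have is only a $C^0$ one, namely $|\Psi_x(v)-x-L(v)|\le K\varepsilon\delta$ uniformly in $v$, where $L$ is the coordinate reordering. A continuous map that is merely $C^0$-close to a linear isomorphism need not be injective (already in one variable, $v\mapsto v+\varepsilon\delta\sin(v/\varepsilon\delta)$). What would suffice is a Lipschitz estimate $|\Psi_x(v)-\Psi_x(v')-L(v-v')|\le K\varepsilon|v-v'|$, but this is not available: when $v_2$ varies, the graph function of the $\mathcal{F}_3$-plaque through $z_2(v_2)$ varies only \emph{continuously} in $v_2$, since $\mathcal{F}_3$ is only a $C^0$ foliation. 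Hence $\Psi_x$ need not be Lipschitz in $v_2$, and injectivity is not justified by your argument. Your final paragraph recognises the difficulty but does not resolve it; note also that invariance of domain \emph{assumes} injectivity, it does not produce it.

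Fortunately, injectivity is unnecessary: the lemma only asks for non-emptiness. The $C^0$ estimate alone shows that on the boundary of the parameter domain the straight-line homotopy $t\mapsto (1-t)L(v)+t(\Psi_x(v)-x)$ avoids every point $p$ with $|p|<(1-K\varepsilon)\delta$. By Brouwer degree, $\Psi_x$ is surjective onto a ball of radius $(1-K\varepsilon)\delta$ about $x$, which is all you need to write $y=\Psi_x(v_2,v_3,v_1)$ and extract $w=z_3$. Replace the injectivity/invariance-of-domain step by this degree argument and your proof goes through.
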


\section{Completeness of the center foliation}
\label{sec:5}

From now, we assume $(\varphi^t)$ is a partially hyperbolic flow with a flow invariant subcenter foliation $\mathcal{F}^{\hat{c}}$ with $C^1$ leaves.
All of the following results are true if we switch the unstable and stable roles, by considering the reverse flow.
\subsection{Unstable projection}

\begin{proposition}
\label{prop:unstproj}
    Let $(\varphi^t)$ a smooth partially hyperbolic flow with a flow invariant subcenter foliation $\mathcal{F}^{\hat{c}}$ with $C^1$-leaves. Let $0<C,\delta_0<1$ given by Lemma \ref{lem:bonhettriple} for $\mathcal{F}_1=\mathcal{F}^u$, $\mathcal{F}_2=\mathcal{F}^{\hat{c}}$ and $\mathcal{F}_3=\mathcal{F}^{ws}$.\\
    Then there exists $\mu_0<\delta_0$ such that:
    \begin{enumerate}[label=(\roman*)]
        \item For every $x,y \in M$ with $d(x,y)<\mu_0$, the set 
        $$\mathcal{F}^{u}_{\mu_0}(y)\cap\bigcup_{z \in \mathcal{F}^{\hat{c}}_{\mu_0}(x)}\mathcal{F}^{ws}_{ \mu_0}(z)$$
        contains at most one point;
        \item Furthermore, if $d(x,y)<C\mu$ where $\mu<\mu_0$, then this set consists of a single element $\pi^{u}(x,y)$ which is equal to the singleton
        $$\mathcal{F}^{u}_{\mu}(y)\cap\bigcup_{z \in \mathcal{F}^{\hat{c}}_{\mu}(x)}\mathcal{F}^{ws}_{\mu}(z),$$
        and the map $(x,y)\mapsto\pi^u(x,y)$ is continuous.
    \end{enumerate}
\end{proposition}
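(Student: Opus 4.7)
The plan is to split the argument into existence (which comes directly from Lemma \ref{lem:bonhettriple}) and uniqueness (which is the genuinely new content and follows from a local-product-structure argument). The distributions $E^u$, $E^{\hat{c}}$, $E^{ws} := E^s \oplus \mathbb R X$ are in direct sum spanning $TM$, so $\mathcal{F}^u$, $\mathcal{F}^{\hat{c}}$, $\mathcal{F}^{ws}$ are mutually transverse continuous foliations with $C^1$-leaves, and Lemma \ref{lem:transvfol} applies to every pair.

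I would choose $\mu_0 \in (0, \delta_0)$ small enough so that (a) the conclusion of Lemma \ref{lem:transvfol} holds at scale $4\mu_0$ for every pair among these three foliations, and (b) around every $p \in M$ there is a continuous chart $\Psi_p : U_p \to V^u \times V^{\hat{c}} \times V^{ws}$ that simultaneously trivializes the three foliations as the three coordinate directions, with all $\mu_0$-leaf balls at points of $U_p$ contained in $U_p$. Such a simultaneous chart can be built using the triple-intersection statement of Lemma \ref{lem:bonhettriple} to define the coordinates, combined with the pairwise uniqueness from Lemma \ref{lem:transvfol} to ensure that these coordinates are well-defined and injective.

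For the uniqueness in (i), I would work in the chart $\Psi_x$ around $x$. In it, each $\mathcal{F}^{ws}_{\mu_0}(z)$ for $z \in \mathcal{F}^{\hat{c}}_{\mu_0}(x)$ is a $V^{ws}$-slice, and $\mathcal{F}^{\hat{c}}_{\mu_0}(x)$ itself is a $V^{\hat{c}}$-slice, so the thickened set $B(x) := \bigcup_{z \in \mathcal{F}^{\hat{c}}_{\mu_0}(x)} \mathcal{F}^{ws}_{\mu_0}(z)$ reads as a product slab with fixed $V^u$-coordinate equal to that of $x$. On the other hand $\mathcal{F}^u_{\mu_0}(y)$ is a $V^u$-slice at the $(V^{\hat{c}}, V^{ws})$-coordinates of $y$. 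The intersection therefore consists of at most one point, proving (i).

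For (ii), Lemma \ref{lem:bonhettriple} applied with $\delta = \mu < \mu_0$ gives, under $d(x,y) < C\mu$, at least one point in $\mathcal{F}^u_\mu(y) \cap \bigcup_{z \in \mathcal{F}^{\hat{c}}_\mu(x)} \mathcal{F}^{ws}_\mu(z)$; this point a fortiori belongs to the $\mu_0$-set, which has at most one element by (i), so the two sets coincide as the singleton $\{\pi^u(x,y)\}$. Continuity of $(x,y) \mapsto \pi^u(x,y)$ is then immediate from the chart $\Psi_x$ and the continuity of the three foliations. The main obstacle is step (b) above: the careful construction of simultaneous triple foliation charts for three transverse continuous foliations with $C^1$-leaves. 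Although it does follow from Lemma \ref{lem:bonhettriple} combined with pairwise transversality (Lemma \ref{lem:transvfol}), it requires some bookkeeping to check that the iterated parametrization is a homeomorphism onto its image and respects all three foliations simultaneously.
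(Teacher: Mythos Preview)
Your argument has a genuine gap at step~(b), the construction of a simultaneous continuous chart $\Psi_p$ trivialising $\mathcal{F}^u$, $\mathcal{F}^{\hat c}$, $\mathcal{F}^{ws}$ at once. This is not bookkeeping: producing such a chart is \emph{equivalent} to the uniqueness statement~(i) you are trying to prove. Indeed, the natural way to define the $V^u$-coordinate of a point $q$ near $p$ is exactly $\mathcal{F}^u_{\delta}(q)\cap \bigcup_{z\in\mathcal{F}^{\hat c}_\delta(p)}\mathcal{F}^{ws}_\delta(z)$, and for that to be single-valued you need~(i). Lemma~\ref{lem:bonhettriple} only gives non-emptiness of this intersection, and Lemma~\ref{lem:transvfol} only says that two local leaves of a transverse pair through the \emph{same} point meet in a singleton; neither controls the intersection of $\mathcal{F}^u_\mu(y)$ with the thickened set $B(x)$. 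In fact, $B(x)=\bigcup_{z\in\mathcal{F}^{\hat c}_\mu(x)}\mathcal{F}^{ws}_\mu(z)$ is only a $C^0$ graph over the $(\hat c,ws)$-coordinates (the $\mathcal{F}^{ws}$-leaves vary only continuously with their basepoint), and a $C^0$ graph over $\mathbb R^{n-d_u}$ can meet a $C^1$ graph over $\mathbb R^{d_u}$ in several points. Without integrability of $E^{\hat c}\oplus E^{ws}$ (which is only proved later, in Proposition~\ref{prop:dyncohcent}) there is no purely topological shortcut here.

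The paper sidesteps this by \emph{reducing to the discrete case}. The time-one map $\varphi^1$ is a partially hyperbolic diffeomorphism with invariant foliations $\mathcal{F}^s,\mathcal{F}^c,\mathcal{F}^u$, and the result of Bohnet--Bonatti \cite{bohnet_partially_2014} (whose proof uses the dynamics of $\varphi^1$, not just transversality) yields $\mu_0$ with
\[
\#\Bigl(\mathcal{F}^u_{\mu_0}(y)\cap\bigcup_{z\in\mathcal{F}^c_{\mu_0}(x)}\mathcal{F}^s_{\mu_0}(z)\Bigr)\le 1.
\]
The genuinely new step is then the inclusion, via Lemma~\ref{lem:equivmetricweak}, of the $(\hat c,ws)$-thickening into the $(c,s)$-thickening at a slightly larger scale:
\[
\bigcup_{z\in\mathcal{F}^{\hat c}_{\mu}(x)}\mathcal{F}^{ws}_{\mu}(z)\ \subset\ \bigcup_{z\in\mathcal{F}^{c}_{\mu_0}(x)}\mathcal{F}^{s}_{\mu_0}(z)
\]
for $\mu$ small enough (using $\mathcal{F}^{ws}_\mu(z)\subset\bigcup_{z'\in\Phi_{C_2\mu}(z)}\mathcal{F}^s_{C_2\mu}(z')$ and $\bigcup_{z\in\mathcal{F}^{\hat c}_\mu(x)}\Phi_{C_2\mu}(z)\subset\mathcal{F}^c_{C_2\mu/C_1}(x)$). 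This immediately transfers the uniqueness from the discrete setting to the flow setting. If you want to avoid citing the discrete result, you would have to supply a dynamical argument (expansion along $\mathcal{F}^u$ versus subexponential drift in $E^c$) rather than a chart construction.
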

\begin{proof}
    We apply Lemma \ref{lem:bonhettriple} to the time-one map $\varphi^1$ which is partially hyperbolic and whose stable, center and unstable foliations are $\mathcal{F}^s$, $\mathcal{F}^c$ and $\mathcal{F}^u$ respectively. This gives $\mu_0>0$ such that for every $x,y \in M$ with $d(x,y)<\mu_0$, 
    $$\mathcal{F}^{u}_{\mu_0}(y)\cap\bigcup_{z \in \mathcal{F}^{c}_{\mu_0}(x)}\mathcal{F}^{s}_{ \mu_0}(z)$$
    contains at most one point.
    If $\mu<\min(\delta_0\frac{C_1}{C_2}, \mu_0\frac{C_1}{C_2})<\min(\delta_0, \mu_0)$, then by Lemma \ref{lem:equivmetricweak},
    $$\bigcup_{z \in \mathcal{F}^{\hat{c}}_{\mu}(x)}\mathcal{F}^{ws}_{ \mu}(z) \subset \bigcup_{z \in \mathcal{F}^{\hat{c}}_{\mu}(x)}\bigcup_{z' \in \Phi_{C_2\mu}(z)}\mathcal{F}^{s}_{ C_2\mu}(z')\subset \bigcup_{z \in \mathcal{F}^{c}_{\frac{C_2}{C_1}\mu}(x)}\mathcal{F}^{s}_{ C_2\mu}(z) \subset \bigcup_{z \in \mathcal{F}^{c}_{\mu_0}(x)}\mathcal{F}^{s}_{ \mu_0}(z).$$
    Therefore, if $d(x,y)< \mu$, 
    $$\mathcal{F}^{u}_{\mu}(y)\cap\bigcup_{z \in \mathcal{F}^{\hat{c}}_{\mu}(x)}\mathcal{F}^{ws}_{ \mu}(z) \subset \mathcal{F}^{u}_{\mu_0}(y)\cap\bigcup_{z \in \mathcal{F}^{c}_{\mu_0}(x)}\mathcal{F}^{s}_{ \mu_0}(z)$$
    contains at most one point.\\
    The last point comes from Lemma \ref{lem:bonhettriple}. The continuity of $\pi^u$ can be checked thanks to continuity of all involved foliations.
    \end{proof}
\begin{remark}
\leavevmode
    \begin{itemize}
        \item The last point of the previous Lemma can be used in the following way. 
    Take $x,y \in M$ such that $d(x,y)<\frac{C\mu_0}{2}$. Then $d(x,y)<C\cdot \frac{2d(x,y)}{C}$ and $\frac{2d(x,y)}{C}<\mu_0$ so $\pi^u(x,y)$ is equal to the singleton
    $$\mathcal{F}^{u}_{\frac{2d(x,y)}{C}}(y)\cap\bigcup_{z \in \mathcal{F}^{\hat{c}}_{\frac{2d(x,y)}{C}}(x)}\mathcal{F}^{ws}_{\frac{2d(x,y)}{C}}(z).$$
        \item For $x,y \in M$ such that $d(x,y)<C\mu$ where $\mu < \mu_0$, it comes:
        $$y\in \bigcup_{z \in \mathcal{F}^{\hat{c}}_{\mu}(x)}\mathcal{F}^{ws}_{\mu}(z) \iff \pi^u(x,y)=y.$$
    \end{itemize}
\end{remark}

The result of Proposition \ref{prop:unstproj} allows us to prove dynamical coherence of $\varphi^1$ from completeness of the center foliation:

\begin{proposition}
\label{prop:dyncohcent}
    Let $(\varphi^t)$ a smooth partially hyperbolic flow with a flow invariant subcenter foliation $\mathcal{F}^{\hat{c}}$ with $C^1$-leaves. Assume that the center foliation $\mathcal{F}^c$ is complete.\\
    Then there exists a continuous foliation $\mathcal{F}^{cs}$ with $C^1$ leaves tangent to $E^{cs}$.
    Its leaf at $x \in M$ is $\mathcal{F}^{cs}(x):=\bigcup_{z \in \mathcal{F}^{\hat{c}}(x)}\mathcal{F}^{ws}(z)$.
    In particular, for every $t \in \mathbb R$ and $x \in M$, $\varphi^t(\mathcal{F}^{cs}(x))=\mathcal{F}^{cs}(x)=\mathcal{F}^{cs}(\varphi^t(x))$.
    \end{proposition}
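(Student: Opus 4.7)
The plan is to construct $\mathcal{F}^{cs}$ as the partition whose leaves are the sets $\mathcal{F}^{cs}(x):=\bigcup_{z \in \mathcal{F}^{\hat{c}}(x)}\mathcal{F}^{ws}(z)$ and to verify the foliation axioms using three ingredients: the identification of these leaves with the $\mathcal{F}^c$-saturation of $\mathcal{F}^s$-plaques, the symmetric form granted by completeness, and the local product structure from Proposition \ref{prop:unstproj}.

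First, I would exhibit several equivalent descriptions of the candidate leaves. Since $\mathcal{F}^s$ is flow-invariant, Lemma \ref{lem:weakfol} gives $\mathcal{F}^{ws}(z)=\bigcup_{t\in\mathbb R}\mathcal{F}^s(\varphi^t(z))$, and since $\mathcal{F}^c=\mathcal{F}^{w\hat{c}}$ it follows that
\[
\mathcal{F}^{cs}(x)=\bigcup_{z\in\mathcal{F}^{\hat{c}}(x)}\mathcal{F}^{ws}(z)=\bigcup_{z'\in\mathcal{F}^c(x)}\mathcal{F}^s(z'),
\]
and completeness of $\mathcal{F}^c$ with $\ast=s$ further yields $\mathcal{F}^{cs}(x)=\bigcup_{w\in\mathcal{F}^s(x)}\mathcal{F}^c(w)$. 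These symmetric descriptions are exactly what is needed to verify the partition property: if $y\in\mathcal{F}^{cs}(x)$, then $y\in\mathcal{F}^s(z')$ for some $z'\in\mathcal{F}^c(x)$, so $\mathcal{F}^s(y)=\mathcal{F}^s(z')$ and $\mathcal{F}^c(z')=\mathcal{F}^c(x)$; applying completeness once more gives
\[
\mathcal{F}^{cs}(y)=\bigcup_{u\in\mathcal{F}^s(y)}\mathcal{F}^c(u)=\bigcup_{v\in\mathcal{F}^c(z')}\mathcal{F}^s(v)=\mathcal{F}^{cs}(x).
\]

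Second, I would build a foliated chart around an arbitrary $x\in M$. The distributions $E^{\hat{c}},E^{ws},E^u$ split $TM$ and are tangent to continuous foliations with $C^1$ leaves. For $\mu<\mu_0$ given by Proposition \ref{prop:unstproj}, set $\Sigma_\mu(x):=\bigcup_{z\in\mathcal{F}^{\hat{c}}_\mu(x)}\mathcal{F}^{ws}_\mu(z)$. The transversality of $\mathcal{F}^{\hat{c}}$ and $\mathcal{F}^{ws}$ inside $E^{cs}$, together with the continuity of both foliations, shows that each $\Sigma_\mu(x)$ is a $\dim E^{cs}$-dimensional topological disc subfoliated by its $\mathcal{F}^{\hat{c}}$- and $\mathcal{F}^{ws}$-plaques. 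Proposition \ref{prop:unstproj} then gives, for every $y$ sufficiently close to $x$, that the unstable plaque $\mathcal{F}^u_\mu(y)$ meets $\Sigma_\mu(x)$ at the single point $\pi^u(x,y)$, depending continuously on $y$. The map $y\mapsto(\pi^u(x,y),\tau(y))$, where $\tau(y)$ parametrizes $y$ inside $\mathcal{F}^u_\mu(\pi^u(x,y))$, is a local homeomorphism whose horizontal level sets $\{\tau=\mathrm{const}\}$ match local pieces of the partition established above, yielding a foliated chart that exhibits $\mathcal{F}^{cs}$ as a continuous foliation with $C^1$ leaves tangent to $E^{cs}$.

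Finally, flow-invariance follows from the representation $\mathcal{F}^{cs}(x)=\bigcup_{z'\in\mathcal{F}^c(x)}\mathcal{F}^s(z')$: applying $\varphi^t$ and using flow-invariance of both $\mathcal{F}^s$ and $\mathcal{F}^c$ gives $\varphi^t(\mathcal{F}^{cs}(x))=\bigcup_{z''\in\mathcal{F}^c(\varphi^t(x))}\mathcal{F}^s(z'')=\mathcal{F}^{cs}(\varphi^t(x))$, and this equals $\mathcal{F}^{cs}(x)$ because $\varphi^t(x)\in\Phi(x)\subset\mathcal{F}^c(x)\subset\mathcal{F}^{cs}(x)$ and the leaves form a partition. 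The main obstacle I expect is the chart construction: Proposition \ref{prop:unstproj} supplies the transversal product with $\mathcal{F}^u$, but careful bookkeeping is still needed to show that the horizontal slices in the resulting coordinates coincide with the local $\mathcal{F}^{cs}$-pieces furnished by the partition, and that the disc $\Sigma_\mu(x)$ itself is a topological submanifold with $C^1$ leaves—this last point relies on the purely transverse (non-dynamical) local product of the continuous foliations $\mathcal{F}^{\hat{c}}$ and $\mathcal{F}^{ws}$ inside $E^{cs}$.
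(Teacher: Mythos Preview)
Your overall architecture matches the paper's: identify the leaves via completeness, then build foliated charts using the unstable projection $\pi^u$ from Proposition \ref{prop:unstproj}. The partition check you give is actually more explicit than the paper's, which simply asserts it.

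The genuine gap is the $C^1$ regularity of the leaves. You want $\Sigma_\mu(x)=\bigcup_{z\in\mathcal{F}^{\hat{c}}_\mu(x)}\mathcal{F}^{ws}_\mu(z)$ to be a $C^1$ submanifold tangent to $E^{cs}$, and you propose to get this from the ``purely transverse (non-dynamical) local product'' of the continuous foliations $\mathcal{F}^{\hat{c}}$ and $\mathcal{F}^{ws}$. This does not work: two transverse $C^0$ foliations with $C^1$ leaves need not have a join that is a $C^1$ (or even topological) submanifold, because the plaques of one foliation are only parametrized continuously by the transversal coordinate. Transversality alone gives nothing beyond the pointwise direct-sum splitting of the tangent spaces. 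The paper resolves this by invoking a dynamical input you have not used: since $\varphi^1$ is a partially hyperbolic diffeomorphism with center bundle $E^c$, Theorem 6.1 of \cite{hirsch_invariant_1977} furnishes $\delta>0$ such that each local set $\bigcup_{z\in\mathcal{F}^c_\delta(x)}\mathcal{F}^s_\delta(z)$ is an immersed $C^1$ submanifold tangent to $E^{cs}$. That is the step that upgrades the candidate leaves from mere unions of $C^1$ plaques to honest $C^1$ immersed submanifolds, after which the chart $\eta(y,z)=\mathcal{F}^u_\gamma(y)\cap\mathcal{F}^{cs}_\gamma(z)$ built from Proposition \ref{prop:unstproj} finishes the job exactly as you sketch. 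So the fix is simply to replace your transversality claim with the Hirsch--Pugh--Shub center-stable manifold theorem applied to the time-one map.
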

\begin{proof}
    Since $\varphi^1$ is a partially hyperbolic diffeomorphism with center bundle given by $E^c$, the results of \cite{hirsch_invariant_1977} (Theorem 6.1) give that there exits $\delta>0$ such that for every $x \in M$, 
    $\mathcal{F}_\delta^{cs}(x):=\bigcup_{z \in \mathcal{F}_\delta^{c}(x)}\mathcal{F}_\delta^{s}(z)$ is an immersed $C^1$ submanifold of $M$, tangent to $E^{cs}$.
    Therefore, $\mathcal{F}^{cs}(x)=\bigcup_{z \in \mathcal{F}^{\hat{c}}(x)}\mathcal{F}^{ws}(z)=\bigcup_{z \in \mathcal{F}^{c}(x)}\mathcal{F}^{s}(z)$ is an immersed $C^1$ submanifold of $M$ tangent to $E^{cs}$.
    We now construct a chart at $x \in M$ adapted to $\mathcal{F}^{cs}$.
    Let $\gamma,c>0$ small enough so that the map
    $$\eta:\left \{\begin{array}{ccl}
         \mathcal{F}_{c\gamma}^{cs}(x)\times \mathcal{F}_{c\gamma}^{u}(x) & \to & M \\
         (y,z) & \to & \mathcal{F}_{\gamma}^{u}(y)\cap \mathcal{F}_{\gamma}^{cs}(z)
    \end{array} \right.$$
    is a well-defined homeomorphism onto its image by Proposition \ref{prop:unstproj}. 
    By identifying $\mathcal{F}_{c\gamma}^{cs}(x)$ and $\mathcal{F}_{c\gamma}^{u}(x)$ to open connected subsets of $\mathbb R^{d_c+d_s}$ and $\mathbb R^{d_u}$ respectively, we see that $\eta^{-1}$ is an adapted foliation chart for $\mathcal{F}^{cs}$ whose restriction at plaques of $\mathcal{F}^{cs}$ is $C^1$.
\end{proof}
We proceed in the case of a flow invariant subcenter foliation with $C^1$ leaves and show that the unstable projection defined above is $\varphi^1$-invariant:

\begin{lemma}
\label{lem:unstprojinv}
    Let $(\varphi^t)$ a smooth partially hyperbolic flow with a flow invariant subcenter foliation $\mathcal{F}^{\hat{c}}$ with $C^1$-leaves.\\
    Then there exist $\mu_0'>0$ such that 
    for every $x,y\in M$ with $d(x,y)<\mu_0'$ and $d(\varphi^1(x),\varphi^1(y))<\mu_0'$, $\pi^{u}(x,y)$ and $\pi^{u}\left(\varphi^1(x),\varphi^1(y)\right)$ are well defined and satisfy 
    $$\pi^{u}\left(\varphi^1(x), \varphi^1(y)\right)=\varphi^1\left(\pi^{u}(x,y)\right).$$
\end{lemma}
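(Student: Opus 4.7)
The plan is to exploit the characterization of $\pi^u(x,y)$ as the unique intersection point given by Proposition~\ref{prop:unstproj} together with the fact that $\mathcal{F}^u$, $\mathcal{F}^{\hat{c}}$, and $\mathcal{F}^{ws}$ are all $\varphi^t$-invariant ($\mathcal{F}^u$ from the general theory, $\mathcal{F}^{\hat{c}}$ by assumption, and $\mathcal{F}^{ws}$ as a consequence via Lemma~\ref{lem:weakfol}). Once invariance is in hand, the identity $\varphi^1(\pi^u(x,y)) = \pi^u(\varphi^1(x),\varphi^1(y))$ is morally immediate: both points are characterized by the same geometric intersection in the image.

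Concretely, I would first fix a constant $K \geq 1$ that bounds the intrinsic Lipschitz constant of $\varphi^1$ restricted to any leaf of $\mathcal{F}^u$, $\mathcal{F}^{\hat{c}}$, and $\mathcal{F}^{ws}$. Such a $K$ exists by compactness of $M$ and smoothness of the flow, combined with continuity of the distributions $E^u$, $E^{\hat{c}}$, and $E^{ws} = E^s \oplus \mathbb{R}X$. Then I would choose $\mu_0' > 0$ small enough that
\[
\mu_0' < \frac{C\mu_0}{2}, \qquad \frac{2K\mu_0'}{C} < \mu_0,
\]
where $C$ and $\mu_0$ are the constants produced by Proposition~\ref{prop:unstproj}. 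For such $\mu_0'$, both $\pi^u(x,y)$ and $\pi^u(\varphi^1(x),\varphi^1(y))$ are well defined whenever the hypotheses on $d(x,y)$ and $d(\varphi^1(x),\varphi^1(y))$ hold.

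The core of the argument then goes as follows. Let $p := \pi^u(x,y)$. By the first remark after Proposition~\ref{prop:unstproj}, there exists $z \in \mathcal{F}^{\hat{c}}(x)$ such that $p \in \mathcal{F}^u(y) \cap \mathcal{F}^{ws}(z)$, with the intrinsic distances $d^u(p,y)$, $d^{\hat{c}}(z,x)$, and $d^{ws}(p,z)$ all bounded by $2d(x,y)/C \leq 2\mu_0'/C$. Applying $\varphi^1$ and using flow-invariance of the three foliations, I get $\varphi^1(p) \in \mathcal{F}^u(\varphi^1(y))$ and $\varphi^1(p) \in \mathcal{F}^{ws}(\varphi^1(z))$ with $\varphi^1(z) \in \mathcal{F}^{\hat{c}}(\varphi^1(x))$, and the corresponding intrinsic distances are bounded by $K \cdot 2\mu_0'/C < \mu_0$. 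Therefore $\varphi^1(p)$ lies in the set
\[
\mathcal{F}^u_{\mu_0}(\varphi^1(y)) \cap \bigcup_{z' \in \mathcal{F}^{\hat{c}}_{\mu_0}(\varphi^1(x))} \mathcal{F}^{ws}_{\mu_0}(z'),
\]
which by part (i) of Proposition~\ref{prop:unstproj} contains at most one point. Since $\pi^u(\varphi^1(x),\varphi^1(y))$ also lies in this set, the two points coincide.

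The only nontrivial step is producing the global Lipschitz constant $K$ for the restrictions of $\varphi^1$ to leaves of $\mathcal{F}^{ws}$, as this foliation is only continuous with $C^1$ leaves; however, since $E^{ws}$ is a continuous $d\varphi^1$-invariant subbundle on a compact manifold and $\varphi^1$ is smooth, a uniform bound on $\|d\varphi^1|_{E^{ws}}\|$ yields the desired leafwise Lipschitz constant by integration along paths. Everything else is bookkeeping with the geometric constants $C$, $\mu_0$, $K$, and I do not expect any essential obstacle beyond tracking these.
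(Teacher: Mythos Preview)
Your proposal is correct and follows essentially the same approach as the paper: both arguments pick $p=\pi^u(x,y)$ together with $z\in\mathcal{F}^{\hat c}(x)$ realizing the intersection, push forward by $\varphi^1$ using a uniform bound on $d\varphi^1$ (the paper simply takes $\Lambda=\sup_{w}\|d_w\varphi^1\|$ rather than your leafwise $K$), and conclude by the uniqueness in Proposition~\ref{prop:unstproj}. The only differences are cosmetic bookkeeping with the constants.
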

\begin{proof}
    Let $x,y \in M$ such that $d(x,y)<C\mu$ and $d(\varphi^1(x),\varphi^1(y))<C\mu$, with $\mu<\mu_0$ and $\mu_0$ is given by the previous Lemma. 
    It comes that $p:=\pi^{u}(x,y)$ and $\pi^{u}\left(\varphi^1(x),\varphi^1(y)\right)$ are well defined so we just need to prove, by taking $\mu$ smaller if necessary, that $\varphi^1(p)\in\mathcal{F}^{u}_{\mu_0}(\varphi^1(y))\cap\bigcup_{z' \in \mathcal{F}^{\hat{c}}_{\mu_0}(\varphi^1(x))}\mathcal{F}^{ws}_{ \mu_0}(z')$.
    Let $z \in \mathcal{F}^{\hat{c}}_\mu(x)$ such that $p\in \mathcal{F}^{ws}_\mu(z)$. 
    It comes that $$d^u(y,p)<\mu, \;d^{ws}(p,z)<\mu, \;d^{\hat{c}}(z,x)<\mu.$$ 
    As a result, if we let $\Lambda:=\sup_{w \in M}\|d_w\varphi^1\|>1$ which is finite since $M$ is compact, $$d^u(\varphi^1(y),\varphi^1(p))<\Lambda\mu, \;d^{ws}(\varphi^1(p),\varphi^1(z))<\Lambda\mu, \;d^{\hat{c}}(\varphi^1(z),\varphi^1(x))<\Lambda\mu.$$
    Therefore, if $\mu<\frac{\mu_0}{\Lambda}$, since all foliations are $\varphi^1$-invariant, 
    $$\varphi^1(p)\in\mathcal{F}^{u}_{\mu_0}(\varphi^1(y))\cap\bigcup_{z' \in \mathcal{F}^{\hat{c}}_{\mu_0}(\varphi^1(x))}\mathcal{F}^{ws}_{ \mu_0}(z')$$
    which concludes.
\end{proof}

In case $\mathcal{F}^{\hat{c}}$ has trivial holonomy, then:

\begin{lemma}
\label{lem:unstprojtrivholo}
    Let $(\varphi^t)$ a smooth partially hyperbolic flow with a flow invariant subcenter foliation $\mathcal{F}^{\hat{c}}$ with $C^1$-leaves and trivial holonomy.\\
    Then there exists $\mu_1>0$ such that 
    for every $x\in M$ and subcenter leaf $L^{\hat{c}}$ with $d(x,L^{\hat{c}})<C\mu_1$, the set 
        $$\mathcal{F}^{u}_{\mu_1}(x)\cap\bigcup_{z \in L^{\hat{c}}}\mathcal{F}^{ws}_{ \mu_1}(z)$$
        consists of a single point.\\
    Moreover, for every $x \in M$, $\mathcal{F}^{ws}_{2\mu_1}(x)\cap\mathcal{F}^{\hat{c}}(x)=\{x\}$.
\end{lemma}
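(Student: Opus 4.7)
The plan is to prove the two assertions separately, fixing $\mu_1$ at the end small enough to satisfy the finitely many smallness conditions imposed along the way. I work throughout in the full setting of Theorem \ref{thm:subcentcomp}, so $\mathcal{F}^{\hat{c}}$ is a compact foliation with trivial holonomy.

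The \emph{moreover} statement is the easier of the two. The foliations $\mathcal{F}^{ws}$ and $\mathcal{F}^{\hat{c}}$ are continuous with $C^1$ leaves, and they are transverse since $(E^s\oplus\mathbb{R}X)\cap E^{\hat{c}}=\{0\}$; combined with compactness of the leaves of $\mathcal{F}^{\hat{c}}$, Lemma \ref{lem:transvcompafol} supplies a constant $\mu>0$ with $\mathcal{F}^{ws}_{\mu}(x)\cap\mathcal{F}^{\hat{c}}(x)=\{x\}$ for every $x\in M$. Picking $\mu_1<\mu/2$ then delivers the \emph{moreover} claim.

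For the main clause, existence is an immediate consequence of Proposition \ref{prop:unstproj}(ii). Taking $\mu_1<\mu_0$, a point $x$ with $d(x,L^{\hat{c}})<C\mu_1$ admits some $y_0\in L^{\hat{c}}$ with $d(x,y_0)<C\mu_1$, and the projection $\pi^{u}(y_0,x)$ returned by the proposition sits in $\mathcal{F}^{u}_{\mu_1}(x)\cap\bigcup_{z\in\mathcal{F}^{\hat{c}}_{\mu_1}(y_0)}\mathcal{F}^{ws}_{\mu_1}(z)$, a subset of the set in the statement.

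The real content is uniqueness, which I would reduce to a single application of Proposition \ref{prop:unstproj}(i) at one fixed reference point in $L^{\hat{c}}$. Given two candidates $p_1,p_2\in\mathcal{F}^{u}_{\mu_1}(x)\cap\bigcup_{z\in L^{\hat{c}}}\mathcal{F}^{ws}_{\mu_1}(z)$ with associated $z_i\in L^{\hat{c}}$ satisfying $p_i\in\mathcal{F}^{ws}_{\mu_1}(z_i)$, triangulating through $p_1,x,p_2$ gives $d(z_1,z_2)\le 4\mu_1$ and $d(z_1,x)\le 2\mu_1$. The crucial step is to upgrade the ambient bound $d(z_1,z_2)\le 4\mu_1$ to a leaf bound $d^{\hat{c}}(z_1,z_2)<8\mu_1$, which is precisely what the compactness bullet of Remark \ref{rem:fol} (with $\eta=1$) provides once $\mu_1$ is below the threshold given there. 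Proposition \ref{prop:unstproj}(i) applied to the pair $(z_1,x)$ then tells us that $\mathcal{F}^{u}_{\mu_0}(x)\cap\bigcup_{z\in\mathcal{F}^{\hat{c}}_{\mu_0}(z_1)}\mathcal{F}^{ws}_{\mu_0}(z)$ has at most one element; for $\mu_1<\mu_0/8$ both $p_1$ and $p_2$ lie in this set (in particular $z_2\in\mathcal{F}^{\hat{c}}_{\mu_0}(z_1)$ follows from the leaf bound just obtained), forcing $p_1=p_2$. The main obstacle is purely bookkeeping --- choosing $\mu_1$ below $\mu_0/8$, $\mu/2$, and the Remark \ref{rem:fol} threshold simultaneously --- and the role played by the compactness/trivial-holonomy hypothesis on $\mathcal{F}^{\hat{c}}$ is precisely to make this ambient-to-leaf distance comparison available.
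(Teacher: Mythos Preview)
Your proposal is correct and follows essentially the same route as the paper: existence via Proposition~\ref{prop:unstproj}(ii) applied at a nearest point of $L^{\hat c}$, uniqueness by triangulating $d(z_1,z_2)\le 4\mu_1$, converting to a leaf bound through the compactness clause of Remark~\ref{rem:fol}, and then invoking Proposition~\ref{prop:unstproj}(i) at $(z_1,x)$; the \emph{moreover} part is likewise deduced from Lemma~\ref{lem:transvcompafol}. The only differences are cosmetic (you take $\eta=1$ and $\mu_1<\mu_0/8$ where the paper uses a slightly sharper $d^{\hat c}(z_1,z_2)<5\mu$ and $\mu<\mu_0/5$).
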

\begin{proof}
    The proof is similar to that of Lemma 4.8 of \cite{bohnet_partially_2014}.\\
    Let $x \in M$ and assume $d(x,L^{\hat{c}})<C\mu$ with $\mu< \mu_0$ so that $\mathcal{F}^{u}_{\mu}(x)\cap\bigcup_{z \in L^{\hat{c}}}\mathcal{F}^{ws}_{ \mu}(z)$
    is at least non-empty by Proposition \ref{prop:unstproj} (since $L^{\hat{c}}$ is compact, then $d(x,L^{\hat{c}})=d(x,y)$ for some $y \in L^{\hat{c}}$) .\\
    We prove the uniqueness. Let $\mu<\frac{\mu_0}{4}$ and consider two points $w_1,w_2$ in $\mathcal{F}^{u}_{\mu}(x)\cap\bigcup_{z \in L^{\hat{c}}}\mathcal{F}^{ws}_{ \mu}(z)$. 
    For $i\in \{1,2\}$, let $z_i\in L^{\hat{c}}$ such that $w_i \in \mathcal{F}^{ws}_{\mu}(z_i)$.
    Then $d(z_1, z_2)< d(z_1,w_1)+d(w_1,x)+d(x,w_2)+d(w_2,z_2)<4\mu$ and $d(x,z_1), d(x,z_2)<2\mu<\mu_0$.
    By remark \ref{rem:fol}, we can take $\mu$ small enough (depending only on $\mathcal{F}^{\hat{c}}$) so that $d(z_1, z_2)<4\mu$ implies $d^{\hat{c}}(z_1,z_2)<5\mu$.
    If we assume furthermore that $\mu<\frac{\mu_0}{5}$, then both $w_1$ and $w_2$ belong to $\mathcal{F}^{u}_{\mu_0}(x)\cap\bigcup_{z' \in \mathcal{F}^{\hat{c}}_{\mu_0}(z_1)}\mathcal{F}^{ws}_{ \mu_0}(z')$ which contains at most one point by Proposition \ref{prop:unstproj} (since $d(x,z_1)< \mu_0$), so $w_1=w_2$ and the proof of the first point is complete.\\
    The proof of the second point is a consequence of Corollary \ref{lem:transvcompafol}.
    \end{proof}
In particular, the previous corollary implies that for every $x\in M$, every point $y$ in $\bigcup_{z \in \mathcal{F}^{\hat{c}}(x)}\mathcal{F}^{ws}_{ \mu_1}(z)$ belongs to some $\mathcal{F}^{ws}_{ \mu_1}(z)$ for a unique $z\in \mathcal{F}^{\hat{c}}(x)$.
Also, it can be checked that the map $(x,L^{\hat{c}}) \mapsto \mathcal{F}^{u}_{\mu_1}(x)\cap\bigcup_{z \in L^{\hat{c}}}\mathcal{F}^{ws}_{ \mu_1}(z)$ is continuous.

\begin{lemma}
\label{lem:Piu}
    Let $(\varphi^t)$ a smooth partially hyperbolic flow with a flow invariant subcenter foliation $\mathcal{F}^{\hat{c}}$ with $C^1$-leaves and trivial holonomy.\\
    Then there exist $\mu_2>0$ such that 
    for every $x,y\in M$ with $d(x,y)<\mu_2$ and every $w \in \mathcal{F}^{\hat{c}}(y)$, the set 
        $$\mathcal{F}^{u}_{\mu_1}(w)\cap\bigcup_{z \in \mathcal{F}^{\hat{c}}(x)}\mathcal{F}^{ws}_{ \mu_1}(z)$$
        consists of a single element denoted by $\Pi^u_{x,y}(w)$.\\
    Moreover, the map $w \mapsto \Pi^u_{x,y}(w)$ is continuous.
\end{lemma}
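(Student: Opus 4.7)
The plan is to reduce the statement to Lemma \ref{lem:unstprojtrivholo} applied at each point $w$ of the compact leaf $\mathcal{F}^{\hat{c}}(y)$. Indeed, applying that lemma with $w$ in the role of its base point and $L^{\hat{c}} = \mathcal{F}^{\hat{c}}(x)$ produces a unique point in
$$\mathcal{F}^{u}_{\mu_1}(w)\cap\bigcup_{z \in \mathcal{F}^{\hat{c}}(x)}\mathcal{F}^{ws}_{\mu_1}(z),$$
provided that $d(w, \mathcal{F}^{\hat{c}}(x)) < C\mu_1$. The task therefore reduces to choosing $\mu_2 > 0$ so small that $d(x,y) < \mu_2$ forces $d(w, \mathcal{F}^{\hat{c}}(x)) < C\mu_1$ for \emph{every} $w \in \mathcal{F}^{\hat{c}}(y)$; equivalently, that the Hausdorff distance $d_H(\mathcal{F}^{\hat{c}}(x), \mathcal{F}^{\hat{c}}(y))$ is uniformly small when $d(x,y)$ is.

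The key ingredient is thus the continuity of the leaf map $p \mapsto \mathcal{F}^{\hat{c}}(p)$ with respect to the Hausdorff metric on compact subsets of $M$. Under the trivial holonomy assumption, Theorem \ref{thm:carra} guarantees that $M/\mathcal{F}^{\hat{c}}$ is a compact metric space whose distance is precisely the Hausdorff distance between leaves, and the projection $\pi: M \to M/\mathcal{F}^{\hat{c}}$ is continuous by the very definition of the quotient topology. Equivalently, one can invoke the generalized Reeb stability theorem (Theorem \ref{thm:reeb}): each leaf admits a saturated trivial product neighborhood, and within such a neighborhood every other leaf sits as a $C^{0}$-graph uniformly close to the central one. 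Combining this pointwise continuity with the compactness of $M$ yields, by a standard finite covering argument, a uniform $\mu_2 > 0$ such that $d(x,y) < \mu_2$ implies $d_H(\mathcal{F}^{\hat{c}}(x), \mathcal{F}^{\hat{c}}(y)) < C\mu_1$.

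With such a $\mu_2$ fixed, Lemma \ref{lem:unstprojtrivholo} defines $\Pi^{u}_{x,y}(w)$ for each $w \in \mathcal{F}^{\hat{c}}(y)$. Continuity of $w \mapsto \Pi^{u}_{x,y}(w)$ then follows directly from the continuity remark recorded just after Lemma \ref{lem:unstprojtrivholo}, namely that the map $(w, L^{\hat{c}}) \mapsto \mathcal{F}^{u}_{\mu_1}(w) \cap \bigcup_{z \in L^{\hat{c}}} \mathcal{F}^{ws}_{\mu_1}(z)$ is continuous; one simply fixes $L^{\hat{c}} = \mathcal{F}^{\hat{c}}(x)$ and lets $w$ vary in $\mathcal{F}^{\hat{c}}(y)$. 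The main obstacle is the uniform Hausdorff continuity of the leaf map, where the trivial holonomy hypothesis plays a genuine role: only then does Reeb stability realise each nearby leaf as a transverse graph over its neighbor, allowing the Hausdorff distance to be controlled by the distance between base points without the multi-sheeted corrections that arise under non-trivial (finite) holonomy.
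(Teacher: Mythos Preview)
Your proof is correct and follows essentially the same route as the paper's: reduce to Lemma \ref{lem:unstprojtrivholo} by ensuring $d(w,\mathcal{F}^{\hat{c}}(x))<C\mu_1$ for all $w\in\mathcal{F}^{\hat{c}}(y)$ via uniform Hausdorff continuity of the leaf map, obtained from the compactness of $M$ and continuity of the quotient $M\to M/\mathcal{F}^{\hat{c}}$. The paper phrases the uniformity step as ``continuous hence uniformly continuous on compact $M$'' rather than a finite-cover argument, and attributes continuity of $\Pi^u_{x,y}$ to that of $\pi^u$ rather than the post-Lemma \ref{lem:unstprojtrivholo} remark, but these are the same arguments. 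One minor point: the Hausdorff continuity of the leaf map requires only uniform compactness (finite holonomy), not trivial holonomy; the latter is used upstream in Lemma \ref{lem:unstprojtrivholo}, not in the step you flag as the ``main obstacle''.
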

\begin{proof}
    Since the leaf space $N:=M/\mathcal{F^{\hat{c}}}$ is a compact metric space with the Hausdorff distance and because the projection map $M \to N$ is continuous and thus uniformly continuous on $M$, there exists $\mu_2< \mu_1$ such that for every $x,y \in M$ with $d(x,y)<\mu_2$, $d(\mathcal{F}^{\hat{c}}(x),\mathcal{F}^{\hat{c}}(y))<C\mu_1$.
    Therefore if $d(x,y)<\mu_2$, then for every $w \in \mathcal{F}^{\hat{c}}(y)$, $d(w,\mathcal{F}^{\hat{c}}(x))<C\mu_1$ so the previous corollary concludes.
    The continuity of $\Pi^u_{x,y}$ comes from the continuity of $\pi^u$.
\end{proof}

\begin{lemma}
\label{lem:Piuinv}
    Let $(\varphi^t)$ a smooth partially hyperbolic flow with a flow invariant subcenter foliation $\mathcal{F}^{\hat{c}}$ with $C^1$-leaves and trivial holonomy.\\
    Then there exist $\mu_3>0$ such that 
    for every $x,y\in M$ with $d(x,y)<\mu_3$ and $d(\varphi^1(x),\varphi^1(y))<\mu_3$, for every $w \in \mathcal{F}^{\hat{c}}(y)$, $\Pi^{u}_{x,y}(w)$ and $\Pi^{u}_{\varphi^1(x),\varphi^1(y)}(\varphi^1(w))$ are well defined and satisfy 
    $$\Pi^{u}_{\varphi^1(x),\varphi^1(y)}(\varphi^1(w))=\varphi^1\left(\Pi^{u}_{x,y}(w) \right).$$
\end{lemma}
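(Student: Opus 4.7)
The plan is to imitate the proof of Lemma \ref{lem:unstprojinv}: I write $p := \Pi^u_{x,y}(w)$ using the defining characterization of Lemma \ref{lem:Piu}, push it forward by $\varphi^1$, check that the image lies in the analogous defining set for $\Pi^u_{\varphi^1(x),\varphi^1(y)}(\varphi^1(w))$, and conclude by the uniqueness statement in Lemma \ref{lem:Piu}. Concretely, by Lemma \ref{lem:Piu} there exists $z \in \mathcal{F}^{\hat{c}}(x)$ with $p \in \mathcal{F}^u_{\mu_1}(w)$ and $p \in \mathcal{F}^{ws}_{\mu_1}(z)$; flow-invariance of the three foliations $\mathcal{F}^u$, $\mathcal{F}^{ws}$, and $\mathcal{F}^{\hat{c}}$ (the first two are preserved by the partially hyperbolic diffeomorphism $\varphi^1$, the third by hypothesis) gives $\varphi^1(p) \in \mathcal{F}^u(\varphi^1(w)) \cap \mathcal{F}^{ws}(\varphi^1(z))$ with $\varphi^1(z) \in \mathcal{F}^{\hat{c}}(\varphi^1(x))$, so the only nontrivial issue is to control the new leaf distances and show $\varphi^1(p) \in \mathcal{F}^u_{\mu_1}(\varphi^1(w))$ and $\varphi^1(p) \in \mathcal{F}^{ws}_{\mu_1}(\varphi^1(z))$.

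For that quantitative step, set $\Lambda := \sup_{q \in M} \|d_q \varphi^1\| < \infty$, so that $d^u(\varphi^1(w), \varphi^1(p)) \leq \Lambda\, d^u(w,p)$ and $d^{ws}(\varphi^1(p), \varphi^1(z)) \leq \Lambda\, d^{ws}(p,z)$; it therefore suffices to bound both $d^u(w,p)$ and $d^{ws}(p,z)$ by $\mu_1/\Lambda$. I would use two ingredients. First, the map $(x,y,w) \mapsto \Pi^u_{x,y}(w)$ is jointly continuous, by Lemma \ref{lem:Piu} together with the continuity of $\pi^u$ from Proposition \ref{prop:unstproj}. Second, when $\mathcal{F}^{\hat{c}}(x) = \mathcal{F}^{\hat{c}}(y)$, uniqueness forces $\Pi^u_{x,y}(w) = w$. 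On the compact domain $\{(x,y,w) : d(x,y) \leq \mu_2/2,\ w \in \mathcal{F}^{\hat{c}}(y)\}$, uniform continuity then lets me choose $\mu_3 < \mu_2$ so that $d(x,y) < \mu_3$ implies $d(p,w) < \varepsilon$ uniformly in $w$ for any prescribed $\varepsilon > 0$; choosing $\varepsilon$ small and applying the ambient-versus-leaf distance estimate recalled after Remark \ref{rem:fol} to $\mathcal{F}^u$ yields $d^u(w,p) < \mu_1/\Lambda$, and the analogous bound for $d^{ws}(p,z)$ follows from the triangle inequality and the same estimate for $\mathcal{F}^{ws}$.

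With this choice of $\mu_3$, both $\Pi^u_{x,y}(w)$ and $\Pi^u_{\varphi^1(x), \varphi^1(y)}(\varphi^1(w))$ are defined, and $\varphi^1(p)$ is a point of $\mathcal{F}^u_{\mu_1}(\varphi^1(w)) \cap \bigcup_{z' \in \mathcal{F}^{\hat{c}}(\varphi^1(x))} \mathcal{F}^{ws}_{\mu_1}(z')$, which is a singleton by Lemma \ref{lem:Piu}; hence $\varphi^1(p) = \Pi^u_{\varphi^1(x), \varphi^1(y)}(\varphi^1(w))$, as desired. The main obstacle, compared with Lemma \ref{lem:unstprojinv}, is exactly this quantitative control: since $\mathcal{F}^u$ is expanded by $\varphi^1$ and the weak stable direction $\mathbb{R}X \oplus E^s$ is not contracted (because of the neutral flow factor), one cannot simply propagate the threshold $\mu_1$ forward as in a contracting-bundle argument. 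The remedy is to shrink the initial scale by the factor $\Lambda$, for which the uniform continuity of $\Pi^u$ on a compact set is essential, rather than any direct invariant-ball argument.
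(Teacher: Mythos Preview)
Your argument is correct and follows essentially the same route as the paper, which dispatches the lemma in one line by citing Lemma~\ref{lem:Piu} and Lemma~\ref{lem:unstprojinv}. The intended reduction is: for $w\in\mathcal F^{\hat c}(y)$ pick $z_w\in\mathcal F^{\hat c}(x)$ with $d(z_w,w)$ controlled by the Hausdorff distance between the two subcenter leaves (this is exactly the mechanism in the proof of Lemma~\ref{lem:Piu}); then $\Pi^u_{x,y}(w)=\pi^u(z_w,w)$ and $\Pi^u_{\varphi^1(x),\varphi^1(y)}(\varphi^1(w))=\pi^u(\varphi^1(z_w),\varphi^1(w))$, so Lemma~\ref{lem:unstprojinv} applied to the pair $(z_w,w)$ finishes the job without repeating the Lipschitz estimate. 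You instead rerun that Lipschitz argument from scratch, which is fine but longer.

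One step in your write-up is loose: the bound $d^{ws}(p,z)<\mu_1/\Lambda$ does not follow from the triangle inequality as stated, because the specific $z\in\mathcal F^{\hat c}(x)$ with $p\in\mathcal F^{ws}_{\mu_1}(z)$ is only known a priori to lie at $\mathcal F^{ws}$-distance $<\mu_1$ from $p$, not to be close to $w$. The clean fix is to invoke part~(ii) of Proposition~\ref{prop:unstproj} (and its remark): since the Hausdorff closeness of $\mathcal F^{\hat c}(x)$ and $\mathcal F^{\hat c}(y)$ gives $d(w,\mathcal F^{\hat c}(x))<C\mu$ for $\mu$ as small as you like, the point $p=\pi^u(z_w,w)$ automatically lies in $\mathcal F^u_\mu(w)\cap\bigcup_{z\in\mathcal F^{\hat c}_\mu(z_w)}\mathcal F^{ws}_\mu(z)$, giving both $d^u(w,p)<\mu$ and $d^{ws}(p,z)<\mu$ at once. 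This replaces your uniform-continuity detour and makes the argument literally Lemma~\ref{lem:unstprojinv} applied pointwise along the subcenter leaf.
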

\begin{proof}
    This is immediate by the previous corollary and Lemma \ref{lem:unstprojinv}.
\end{proof}

\subsection{Unstable projection distance and completeness of $E^c\oplus E^s$}

For $x,y\in M$ with $d(x,y)<\mu_2$, we denote by 
$$\Delta^{u}(x,y):=\sup_{w \in \mathcal{F}^{\hat{c}}(y)}d_u \left(w, \Pi^{u}_{x,y}(w) \right).$$
Remark that by construction $\Delta^u(x,y)<\mu_2$.
\begin{lemma}
\label{lem:DeltaUcarac}
    Let $x, y \in M$ such that $d(x,y)<\mu_2$.\\
    Then $$\Delta^u(x,y)=0 \iff \mathcal{F}^{\hat{c}}(y) \subset\bigcup_{z \in \mathcal{F}^{\hat{c}}(x)}\mathcal{F}_{\mu_1}^{ws}(z).$$
\end{lemma}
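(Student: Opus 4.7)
The plan is essentially a direct unwinding of the definitions of $\Delta^u(x,y)$ and $\Pi^u_{x,y}$, combined with the uniqueness portion of Lemma \ref{lem:Piu}. Both directions reduce to the pointwise characterization: for every $w \in \mathcal{F}^{\hat{c}}(y)$, one has $w \in \bigcup_{z \in \mathcal{F}^{\hat{c}}(x)} \mathcal{F}^{ws}_{\mu_1}(z)$ if and only if $\Pi^u_{x,y}(w) = w$, and hence if and only if $d_u(w, \Pi^u_{x,y}(w)) = 0$.

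For the forward implication, I would assume $\Delta^u(x,y) = 0$. Since $\Delta^u(x,y)$ is by definition a supremum of nonnegative quantities, this forces $d_u(w, \Pi^u_{x,y}(w)) = 0$ for every $w \in \mathcal{F}^{\hat{c}}(y)$, hence $w = \Pi^u_{x,y}(w)$. By the very construction of $\Pi^u_{x,y}$ in Lemma \ref{lem:Piu}, the point $\Pi^u_{x,y}(w)$ lies in $\bigcup_{z \in \mathcal{F}^{\hat{c}}(x)} \mathcal{F}^{ws}_{\mu_1}(z)$, so $w$ does as well, giving the claimed inclusion.

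For the converse, assume the inclusion and fix an arbitrary $w \in \mathcal{F}^{\hat{c}}(y)$. By hypothesis there exists $z \in \mathcal{F}^{\hat{c}}(x)$ with $w \in \mathcal{F}^{ws}_{\mu_1}(z)$, and trivially $w \in \mathcal{F}^u_{\mu_1}(w)$. Consequently $w$ belongs to the intersection
\[
\mathcal{F}^{u}_{\mu_1}(w) \cap \bigcup_{z' \in \mathcal{F}^{\hat{c}}(x)} \mathcal{F}^{ws}_{\mu_1}(z'),
\]
which, since $d(x,y) < \mu_2$, is a singleton by Lemma \ref{lem:Piu}, namely $\{\Pi^u_{x,y}(w)\}$. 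Thus $w = \Pi^u_{x,y}(w)$, giving $d_u(w, \Pi^u_{x,y}(w)) = 0$. Taking the supremum over $w \in \mathcal{F}^{\hat{c}}(y)$ yields $\Delta^u(x,y) = 0$.

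I do not expect any real obstacle here: the statement is morally a tautology once the uniqueness in Lemma \ref{lem:Piu} is in hand, and the only point to be careful about is that the hypothesis $d(x,y) < \mu_2$ ensures $\Pi^u_{x,y}$ is well defined on all of $\mathcal{F}^{\hat{c}}(y)$, so both sides of the equivalence make sense simultaneously.
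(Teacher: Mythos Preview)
Your proof is correct and follows exactly the same route as the paper's: both directions unwind the definition of $\Pi^u_{x,y}$ and use the uniqueness from Lemma~\ref{lem:Piu} to identify $w$ with $\Pi^u_{x,y}(w)$ precisely when $w$ already lies in $\bigcup_{z \in \mathcal{F}^{\hat{c}}(x)}\mathcal{F}^{ws}_{\mu_1}(z)$. Your write-up is in fact slightly more explicit than the paper's in spelling out why the supremum vanishes and why the singleton condition applies.
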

\begin{proof}
    If $\Delta^u(x,y)=0$, then for every $w \in \mathcal{F}^{\hat{c}}(y)$, $w=\Pi^{u}_{x,y}(w)\in \mathcal{F}^{u}_{\mu_1}(w)\cap\bigcup_{z \in \mathcal{F}^{\hat{c}}(x)}\mathcal{F}^{ws}_{ \mu_1}(z)$.
    Conversely, for every $w \in \mathcal{F}^{\hat{c}}(y)$, $w \in \mathcal{F}^{u}_{\mu_1}(w)\cap\bigcup_{z \in \mathcal{F}^{\hat{c}}(x)}\mathcal{F}^{ws}_{ \mu_1}(z)=\left\{\Pi^{u}_{x,y}(w)\right\}$. 
    Thus $\Delta^u(x,y)=0$.
\end{proof}
\begin{lemma}
\label{lem:DeltaUinv}
     Let $x, y \in M$ such that $d(x,y)<\mu_3$ and $d(\varphi^1(x),\varphi^1(y))<\mu_3$.\\
    Then $$\Delta^u(\varphi^1(x),\varphi^1(y)) \geq \beta\Delta^u(x,y).$$
\end{lemma}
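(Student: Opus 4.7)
The plan is to combine the $\varphi^1$-equivariance of $\Pi^u$ (Lemma \ref{lem:Piuinv}) with the uniform expansion on the strong unstable bundle: under the choice $C=1$ (which was allowed by changing the Riemannian metric), we have $\|d\varphi^1(v)\| \geq \beta\|v\|$ for every $v \in E^u$. The key observation is that $\varphi^1$ restricts to a diffeomorphism between unstable leaves, so a $C^1$ path joining two points $a,b$ in $\mathcal{F}^u(w)$ corresponds bijectively with a $C^1$ path joining $\varphi^1(a), \varphi^1(b)$ in $\mathcal{F}^u(\varphi^1(w))$, and the length of any such path is stretched by at least $\beta$. Passing to the infimum of lengths gives $d^u(\varphi^1(a), \varphi^1(b)) \geq \beta\, d^u(a,b)$ for any two points on the same unstable leaf.

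Now fix $x,y$ with $d(x,y)<\mu_3$ and $d(\varphi^1(x),\varphi^1(y))<\mu_3$. By Lemma \ref{lem:Piu}, for every $w \in \mathcal{F}^{\hat{c}}(y)$, both $\Pi^u_{x,y}(w)$ and $\Pi^u_{\varphi^1(x),\varphi^1(y)}(\varphi^1(w))$ are well defined. By definition of $\Pi^u_{x,y}(w)$, the point $\Pi^u_{x,y}(w)$ lies on $\mathcal{F}^u_{\mu_1}(w)$, and the two points are joined by a path in the unstable leaf of length less than $\mu_2$. Applying the expansion estimate above on this leaf yields
\[
d^u\bigl(\varphi^1(w),\,\varphi^1(\Pi^u_{x,y}(w))\bigr) \;\geq\; \beta\, d^u\bigl(w,\,\Pi^u_{x,y}(w)\bigr).
\]
By Lemma \ref{lem:Piuinv}, $\varphi^1(\Pi^u_{x,y}(w)) = \Pi^u_{\varphi^1(x),\varphi^1(y)}(\varphi^1(w))$, so
\[
d^u\bigl(\varphi^1(w),\,\Pi^u_{\varphi^1(x),\varphi^1(y)}(\varphi^1(w))\bigr) \;\geq\; \beta\, d^u\bigl(w,\,\Pi^u_{x,y}(w)\bigr).
\]

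Finally, since $\mathcal{F}^{\hat{c}}$ is $\varphi^1$-invariant, the map $w \mapsto \varphi^1(w)$ is a bijection from $\mathcal{F}^{\hat{c}}(y)$ onto $\mathcal{F}^{\hat{c}}(\varphi^1(y))$. Taking the supremum over $w \in \mathcal{F}^{\hat{c}}(y)$ in the inequality above and using this bijection to rewrite the left-hand side as a supremum over $w' = \varphi^1(w) \in \mathcal{F}^{\hat{c}}(\varphi^1(y))$ gives
\[
\Delta^u\bigl(\varphi^1(x),\varphi^1(y)\bigr) \;=\; \sup_{w \in \mathcal{F}^{\hat{c}}(y)} d^u\bigl(\varphi^1(w),\,\Pi^u_{\varphi^1(x),\varphi^1(y)}(\varphi^1(w))\bigr) \;\geq\; \beta\, \Delta^u(x,y).
\]
No serious obstacle is expected: the statement is essentially a packaging of $\varphi^1$-equivariance of $\Pi^u$ with uniform unstable expansion. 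The only mild point to watch is that $\Delta^u(x,y) < \mu_2 < \mu_1$, which keeps all pairs $(w, \Pi^u_{x,y}(w))$ within a single small unstable plaque where $d^u$ is genuinely realized by short paths, so that the pointwise expansion estimate on $E^u$ really translates to expansion of the unstable distance.
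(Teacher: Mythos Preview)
Your proof is correct and follows essentially the same approach as the paper: combine the $\varphi^1$-equivariance of $\Pi^u$ from Lemma~\ref{lem:Piuinv} with the uniform expansion $d^u(\varphi^1(a),\varphi^1(b))\geq \beta\,d^u(a,b)$ on unstable leaves, then take the supremum over $w\in\mathcal{F}^{\hat c}(y)$ using the bijection $w\mapsto\varphi^1(w)$ onto $\mathcal{F}^{\hat c}(\varphi^1(y))$. Your write-up is in fact more explicit than the paper's about why the expansion estimate passes to $d^u$ and about the bijection used in the final supremum.
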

\begin{proof}
    By definition, $$\Delta^{u}(\varphi^1(x),\varphi(y)):=\sup_{w \in \mathcal{F}^{\hat{c}}(y)}d_u \left(\varphi^1(w), \Pi^{u}_{\varphi^1(x),\varphi^1(y)}(\varphi^1(w)) \right).$$
    For every $w \in \mathcal{F}^{\hat{c}}(x)$, it comes by Lemma \ref{lem:Piuinv}:
    $$\Delta^{u}(\varphi^1(x),\varphi(y))\geq d_u \left(\varphi^1(w), \Pi^{u}_{\varphi^1(x),\varphi^1(y)}(\varphi^1(w)) \right)=d_u \left(\varphi^1(w), \varphi^1(\Pi^{u}_{x,y}(w)) \right)\geq \beta d_u \left(w, \Pi^{u}_{x,y}(w) \right).$$
    We conclude by taking the supremum over all $w \in \mathcal{F}^{\hat{c}}(x)$ in the right hand side.
\end{proof}

\begin{corollary}
\label{cor:Fçinclus..}
    Let $x\in M$ and $y\in \mathcal{F}_{\mu_3}^s(x)$.\\
    Then $\mathcal{F}^{\hat{c}}(y) \subset\bigcup_{z \in \mathcal{F}^{\hat{c}}(x)}\mathcal{F}_{\mu_1}^{ws}(z).$
\end{corollary}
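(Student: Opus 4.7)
The plan is to reduce the desired inclusion to the equation $\Delta^u(x,y)=0$ and then derive this by combining a uniform upper bound on $\Delta^u$ with the expansion estimate of Lemma \ref{lem:DeltaUinv}, iterated along the forward orbit of the pair $(x,y)$.

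First I would observe, by Lemma \ref{lem:DeltaUcarac}, that the desired inclusion is equivalent to $\Delta^u(x,y)=0$. Second, I would note that $\Delta^u$ satisfies a uniform a priori bound: for any admissible pair $(x',y')$ and every $w\in\mathcal{F}^{\hat{c}}(y')$, the point $\Pi^u_{x',y'}(w)$ is defined via Lemma \ref{lem:Piu} as an element of $\mathcal{F}^u_{\mu_1}(w)$, so $d_u(w,\Pi^u_{x',y'}(w))\leq \mu_1$ and therefore $\Delta^u(x',y')\leq \mu_1$.

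Next, to iterate Lemma \ref{lem:DeltaUinv} indefinitely along the forward orbit, I need to ensure that both $d(\varphi^n(x),\varphi^n(y))<\mu_3$ and $d(\varphi^{n+1}(x),\varphi^{n+1}(y))<\mu_3$ hold for every $n\geq 0$. Since $y\in\mathcal{F}^s_{\mu_3}(x)$ gives $d^s(x,y)<\mu_3$, and after rescaling the Riemannian metric so that the constant $C$ of Definition \ref{def:parthypflo} equals $1$, the stable contraction produces
\[ d^s(\varphi^n(x),\varphi^n(y))\leq \alpha^n\, d^s(x,y)<\mu_3 \quad \text{for all } n\geq 0.\]
Since $d\leq d^s$ along a stable leaf, this yields $d(\varphi^n(x),\varphi^n(y))<\mu_3$ uniformly in $n\geq 0$, so Lemma \ref{lem:DeltaUinv} applies at every step.

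Iterating Lemma \ref{lem:DeltaUinv} along the forward orbit then gives $\Delta^u(\varphi^n(x),\varphi^n(y))\geq \beta^n\,\Delta^u(x,y)$ for every $n\geq 0$. Combining this with the uniform upper bound from the second step produces $\beta^n\,\Delta^u(x,y)\leq \mu_1$ for all $n\geq 0$, and since $\beta>1$ the only possibility is $\Delta^u(x,y)=0$. Invoking Lemma \ref{lem:DeltaUcarac} in the reverse direction then yields the claimed inclusion. The only point requiring care is the uniform iteration hypothesis of Lemma \ref{lem:DeltaUinv} along $\{\varphi^n(x),\varphi^n(y)\}_{n\geq 0}$, which is handled cleanly by the stable contraction together with the metric comparison $d\leq d^s$; no further obstacle arises.
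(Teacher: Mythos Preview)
Your proposal is correct and follows essentially the same route as the paper: iterate Lemma \ref{lem:DeltaUinv} along the forward orbit, combine with the uniform upper bound on $\Delta^u$ to force $\Delta^u(x,y)=0$, and conclude via Lemma \ref{lem:DeltaUcarac}. The paper's version is terser---it simply writes $\mu_2>\Delta^u(\varphi^n(x),\varphi^n(y))\geq \beta^n\Delta^u(x,y)$ without spelling out the verification that $d(\varphi^n(x),\varphi^n(y))<\mu_3$ for all $n$---whereas you make this step explicit via the stable contraction and the comparison $d\leq d^s$.
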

\begin{proof}
    By Lemma \ref{lem:DeltaUinv}, for every $n \in \mathbb N$, 
    $$\mu_2>\Delta^u(\varphi^n(x),\varphi^n(y)) \geq \beta^n\Delta^u(x,y).$$
    Therefore, $\Delta^u(x,y)=0$ since $\beta>1$, which concludes by Lemma \ref{lem:DeltaUcarac}.
\end{proof}

\begin{remark}
    In fact by Proposition \ref{prop:unstproj} and the previous Corollary, if $\mu$ is sufficiently small, then for every $x\in M$ and $y\in \mathcal{F}_\mu^s(x)$, $\mathcal{F}^{\hat{c}}(y) \subset\bigcup_{z \in \mathcal{F}^{\hat{c}}(x)}\mathcal{F}_{\mu_1/2}^{ws}(z).$
    Therefore by Lemma \ref{lem:equivmetricweak}, there exists $\mu_4>0$ such that for every $x\in M$ and $y\in \mathcal{F}^{ws}_{\mu_4}(x)$, then $\mathcal{F}^{\hat{c}}(y) \subset\bigcup_{z \in \mathcal{F}^{\hat{c}}(x)}\mathcal{F}_{\mu_1}^{ws}(z).$
\end{remark}

\begin{corollary}
\label{cor:Fcs=FçxFsloc}
    For every $x\in M$, the map
    $$\left \{\begin{array}{ccl}
        \mathcal{F}^{ws}_{\mu_4}(x) \times \mathcal{F}^{\hat{c}}(x)&\to & M\\
         (y,z) &\mapsto & \mathcal{F}^{\hat{c}}(y)\cap \mathcal{F}^{ws}_{\mu_1}(z)
    \end{array} \right.$$
    is a well-defined homeomorphism onto its image.
\end{corollary}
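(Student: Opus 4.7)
The plan is to reduce the corollary to showing that for each $y\in\mathcal{F}^{ws}_{\mu_4}(x)$, the assignment $p_y:\mathcal{F}^{\hat{c}}(y)\to\mathcal{F}^{\hat{c}}(x)$ sending $y'$ to the unique $z\in\mathcal{F}^{\hat{c}}(x)$ with $y'\in\mathcal{F}^{ws}_{\mu_1}(z)$ is a homeomorphism. Existence of such a $z$ is exactly the content of the remark preceding the corollary, and its uniqueness is the second point of Lemma \ref{lem:unstprojtrivholo}. Once $p_y$ is a homeomorphism, the intersection $\mathcal{F}^{\hat{c}}(y)\cap\mathcal{F}^{ws}_{\mu_1}(z)$ is the singleton $\{p_y^{-1}(z)\}$, which gives well-definedness; the remaining points (injectivity of the map $(y,z)\mapsto p_y^{-1}(z)$, its continuity, and continuity of its inverse) then follow from the same transversality lemma together with the continuity of $\mathcal{F}^{ws}$ and $\mathcal{F}^{\hat{c}}$.

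I would verify the properties of $p_y$ in the order continuous, injective, open, surjective. Continuity is immediate from continuity of both foliations. For injectivity, if $p_y(y_1)=p_y(y_2)=z$ then $y_1,y_2\in\mathcal{F}^{ws}_{\mu_1}(z)\cap\mathcal{F}^{\hat{c}}(y)$, so by the triangle inequality $d^{ws}(y_1,y_2)<2\mu_1$, and applying the second part of Lemma \ref{lem:unstprojtrivholo} at $y_1$ forces $y_1=y_2$. For openness, at each $y'\in\mathcal{F}^{\hat{c}}(y)$ the pointwise transversality of $E^{\hat{c}}$ and $E^{ws}$, combined again with the second part of Lemma \ref{lem:unstprojtrivholo}, provides a local product chart in which $p_y$ is a coordinate projection. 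The analogous injectivity statement for the map in the corollary follows by applying the same transversality argument twice, once in $\mathcal{F}^{\hat{c}}(p)\cap\mathcal{F}^{ws}_{\mu_4}(x)$ and once in $\mathcal{F}^{\hat{c}}(x)\cap\mathcal{F}^{ws}_{\mu_1}(p)$ (shrinking $\mu_4\le\mu_1$ if needed).

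The main obstacle is surjectivity of $p_y$; this is where the compactness, connectedness and trivial-holonomy hypotheses on $\mathcal{F}^{\hat{c}}$ really enter. Since $\mathcal{F}^{\hat{c}}(y)$ is compact, $p_y(\mathcal{F}^{\hat{c}}(y))$ is compact hence closed in $\mathcal{F}^{\hat{c}}(x)$; the openness established above makes it open as well; connectedness of the leaf $\mathcal{F}^{\hat{c}}(x)$ then forces $p_y$ to be onto. A continuous bijection between compact Hausdorff spaces is automatically a homeomorphism, which finishes the argument for $p_y$, and hence for the corollary.
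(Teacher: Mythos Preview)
Your argument is correct, but it takes a longer route than the paper's. The key difference is in how you obtain surjectivity of the projection $p_y:\mathcal{F}^{\hat{c}}(y)\to\mathcal{F}^{\hat{c}}(x)$. You prove it by showing the image is both open and closed in the connected leaf $\mathcal{F}^{\hat{c}}(x)$; the paper instead simply observes that the preceding remark is symmetric in $x$ and $y$: since $y\in\mathcal{F}^{ws}_{\mu_4}(x)$ gives $x\in\mathcal{F}^{ws}_{\mu_4}(y)$, one may apply the remark with the roles of $x$ and $y$ swapped, obtaining directly that every $z\in\mathcal{F}^{\hat{c}}(x)$ lies in $\mathcal{F}^{ws}_{\mu_1}(w)$ for a (unique) $w\in\mathcal{F}^{\hat{c}}(y)$. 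This yields existence of the intersection point in one line, after which the paper simply says ``it can be checked'' that the map is a homeomorphism onto its image.

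Your approach has the merit of being self-contained, and the open-closed argument is a nice use of the compactness and connectedness of the subcenter leaves. One remark on your openness step: invoking a ``local product chart in which $p_y$ is a coordinate projection'' is a bit delicate here, since at this point in the paper the foliation $\mathcal{F}^{cs}$ has not yet been constructed (Proposition~\ref{prop:dyncohcent} is conditional on completeness, proved only afterward). A cleaner justification is invariance of domain: $p_y$ is a continuous injection between topological manifolds of the same dimension, hence open. With that adjustment your proof goes through without difficulty.
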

\begin{proof}
    Let $y \in \mathcal{F}^{ws}_{\mu_4}(x)$ and $z \in \mathcal{F}^{\hat{c}}(x)$. Then $x \in \mathcal{F}^s_{\mu_4}(y)$ so by the previous result, there exists a point $w \in \mathcal{F}^{\hat{c}}(y)$ such that $z\in \mathcal{F}^s_{\mu_1}(w)$ which is unique. Therefore, the above map is well-defined, and it can be checked that it is a homeomorphism onto its image.
\end{proof}
To conclude the proof of completeness of the center distribution, remark that for every $x \in M$, 
$$\bigcup_{z \in \mathcal{F}^{\hat{c}}(x)}\mathcal{F}^{ws}(z)=\bigcup_{z \in \mathcal{F}^{c}(x)}\mathcal{F}^{s}(z) \quad \text{ and } \quad \bigcup_{z \in \mathcal{F}^{s}(x)}\mathcal{F}^{c}(z)=\bigcup_{z \in \mathcal{F}^{ws}(x)}\mathcal{F}^{\hat{c}}(z) .$$
Also, for $x,y \in M$, $\mathcal{F}^{c}(y) \subset\bigcup_{z \in \mathcal{F}^{c}(x)}\mathcal{F}^{s}(z)$ if and only if $\mathcal{F}^{\hat{c}}(y) \subset\bigcup_{z \in \mathcal{F}^{c}(x)}\mathcal{F}^{s}(z)$.

\begin{corollary}
\label{cor:compcent}
    For every $x \in M$, 
    $$\bigcup_{z \in \mathcal{F}^{s}(x)}\mathcal{F}^{c}(z) = \bigcup_{w \in \mathcal{F}^{c}(x)}\mathcal{F}^{s}(w).$$
\end{corollary}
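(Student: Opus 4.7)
The plan is to prove the two inclusions separately. The forward inclusion $\bigcup_{z\in\mathcal{F}^s(x)}\mathcal{F}^c(z)\subset\bigcup_{w\in\mathcal{F}^c(x)}\mathcal{F}^s(w)$ will follow from Corollary \ref{cor:Fçinclus..} by a chain argument inside stable leaves; the reverse inclusion will then be obtained by applying the forward inclusion at a shifted base point.

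\emph{Forward inclusion.} First I would translate Corollary \ref{cor:Fçinclus..} into the language of $\mathcal{F}^c$ and $\mathcal{F}^s$ using the identity
\[\bigcup_{z\in\mathcal{F}^{\hat c}(x)}\mathcal{F}^{ws}(z)=\bigcup_{z\in\mathcal{F}^c(x)}\mathcal{F}^s(z)\]
together with the equivalence $\mathcal{F}^c(y)\subset\bigcup_{z\in\mathcal{F}^c(x)}\mathcal{F}^s(z)\Longleftrightarrow\mathcal{F}^{\hat c}(y)\subset\bigcup_{z\in\mathcal{F}^c(x)}\mathcal{F}^s(z)$ recorded just before the corollary's statement. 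This yields the local fact that for every $x\in M$ and every $y\in\mathcal{F}^s_{\mu_3}(x)$, $\mathcal{F}^c(y)\subset\bigcup_{w\in\mathcal{F}^c(x)}\mathcal{F}^s(w)$. Then, given $z\in\mathcal{F}^s(x)$, I would join $x$ to $z$ by a continuous path inside the connected immersed submanifold $\mathcal{F}^s(x)$ and, using compactness, subdivide it into finitely many pieces $x=p_0,p_1,\dots,p_N=z$ with $p_{i+1}\in\mathcal{F}^s_{\mu_3}(p_i)$ in the intrinsic metric. Applying the local fact at each $p_i$ gives $\mathcal{F}^c(p_{i+1})\subset\bigcup_{w\in\mathcal{F}^c(p_i)}\mathcal{F}^s(w)$, and the trivial remark that $\mathcal{F}^s(w)=\mathcal{F}^s(w')$ whenever $w\in\mathcal{F}^s(w')$ propagates the inclusion $\mathcal{F}^c(p_i)\subset\bigcup_{w\in\mathcal{F}^c(x)}\mathcal{F}^s(w)$ from step $i$ to step $i+1$. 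Induction then gives $\mathcal{F}^c(z)\subset\bigcup_{w\in\mathcal{F}^c(x)}\mathcal{F}^s(w)$, and taking the union over $z\in\mathcal{F}^s(x)$ yields the forward inclusion.

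\emph{Reverse inclusion.} Let $y\in\bigcup_{w\in\mathcal{F}^c(x)}\mathcal{F}^s(w)$ and pick $w\in\mathcal{F}^c(x)\cap\mathcal{F}^s(y)$. Applying the forward inclusion just proved, but at the base point $y$, gives $\bigcup_{z\in\mathcal{F}^s(y)}\mathcal{F}^c(z)\subset\bigcup_{w'\in\mathcal{F}^c(y)}\mathcal{F}^s(w')$. Since $w\in\mathcal{F}^s(y)$ and $x\in\mathcal{F}^c(w)$, the left-hand side contains $x$, so $x\in\mathcal{F}^s(w')$ for some $w'\in\mathcal{F}^c(y)$; in other words $\mathcal{F}^c(y)\cap\mathcal{F}^s(x)\neq\emptyset$. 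Consequently $y\in\mathcal{F}^c(w')\subset\bigcup_{z\in\mathcal{F}^s(x)}\mathcal{F}^c(z)$, which is what we wanted.

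The only genuinely delicate point is the chain step in the forward inclusion: one must verify that a compact continuous path in the immersed submanifold $\mathcal{F}^s(x)$ can be subdivided into intrinsic stable balls of radius smaller than $\mu_3$. This rests on continuity of the path and on the fact that each $\mathcal{F}^s$-leaf is a connected $C^1$ submanifold endowed with a length metric, so that a uniform lower bound on admissible radii exists along the compact image of the path. Everything else is formal manipulation of set inclusions, and the symmetric trick used for the reverse inclusion is what makes no extra analytic work necessary at the end.
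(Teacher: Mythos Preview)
Your argument is correct, and the reverse inclusion is handled exactly as the paper does it. For the forward inclusion, however, you take a genuinely different route. The paper does not use a chain argument along the stable leaf: instead, given $z\in\mathcal{F}^s(x)$, it iterates forward under $\varphi^1$ until $d^s(\varphi^n(z),\varphi^n(x))<\mu_3$, applies Corollary~\ref{cor:Fçinclus..} at that single iterate, and then pulls the resulting inclusion $\mathcal{F}^{\hat c}(\varphi^n(z))\subset\bigcup_{w\in\mathcal{F}^{\hat c}(\varphi^n(x))}\mathcal{F}^{ws}(w)$ back to time $0$ via flow invariance of all the foliations. Your chain argument is equally valid and is the standard device for passing from local to global statements about foliations; its cost is the path-subdivision step you flag (which is indeed routine for a connected $C^1$ leaf with its intrinsic length metric). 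The paper's dynamical shortcut avoids that technicality entirely by exploiting the uniform contraction along $\mathcal{F}^s$ one more time, at the price of invoking the dynamics again after it has already been used inside Corollary~\ref{cor:Fçinclus..}.
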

\begin{proof}
    Let $z \in \mathcal{F}^{s}(x)$ and $y \in \mathcal{F}^c(z)$. 
    Then $d(\varphi^n(z), \varphi^n(x)) \xrightarrow[n \to +\infty]{}0$, so for $n$ large enough $d^s(\varphi^n(z), \varphi^n(x)) <\mu_3$. 
    This implies 
    $$\varphi^n(\mathcal{F}^{\hat{c}}(z))=\mathcal{F}^{\hat{c}}(\varphi^n(z)) \subset\bigcup_{w \in \mathcal{F}^{\hat{c}}(x)}\mathcal{F}^{ws}_{\mu_1}(w)$$ by Corollary \ref{cor:Fçinclus..}, so
    $$\mathcal{F}^{\hat{c}}(z) \subset\bigcup_{w \in \mathcal{F}^{\hat{c}}(x)}\mathcal{F}^{ws}(w)$$ 
    Therefore $y \in \mathcal{F}^{c}(z)\subset \bigcup_{w \in \mathcal{F}^{\hat{c}}(x)}\mathcal{F}^{ws}(w)=\bigcup_{w \in \mathcal{F}^{c}(x)}\mathcal{F}^{s}(w)$. \\
    Conversely, let $z \in \mathcal{F}^c(x)$ and $y \in \mathcal{F}^s(z)$. Then $z \in \mathcal{F}^s(y)$ and $x \in \mathcal{F}^c(z)$ so by the previous point $x \in \bigcup_{w \in \mathcal{F}^{c}(y)}\mathcal{F}^{s}(w)$ i.e. there exists $w\in \mathcal{F}^c(y)$ such that $x \in \mathcal{F}^s(w)$. 
    Therefore, $y\in \mathcal{F}^c(w)$ with $w \in \mathcal{F}^s(x)$ which concludes.
\end{proof}

\section{Completeness of the subcenter foliation}
\label{sec:6}

Again, all of the following results are true if we switch the unstable and stable roles, by considering the reverse flow.

\subsection{Weak unstable projection}

\begin{proposition}
\label{prop:weakunstproj}
    Let $(\varphi^t)$ a smooth partially hyperbolic flow with a flow invariant subcenter foliation $\mathcal{F}^{\hat{c}}$ with $C^1$-leaves. Let $0<C',\delta_0'<1$ given by Lemma \ref{lem:bonhettriple} for $\mathcal{F}_1=\mathcal{F}^{wu}$, $\mathcal{F}_2=\mathcal{F}^{\hat{c}}$ and $\mathcal{F}_3=\mathcal{F}^s$.\\
    Then there exist $\nu_0>0$ such that:
    \begin{enumerate}[label=(\roman*)]
        \item For every $x,y \in M$ with $d(x,y)<\nu_0$, the set 
        $$\mathcal{F}^{wu}_{\nu_0}(y)\cap\bigcup_{z \in \mathcal{F}^{\hat{c}}_{\nu_0}(x)}\mathcal{F}^{s}_{ \nu_0}(z)$$
        contains at most one point;
        \item Furthermore, if $d(x,y)<C'\nu$ where $\nu<\nu_0$, then this set consists of a single element $\pi^{wu}(x,y)$ which is equal to the singleton
        $$\mathcal{F}^{wu}_{\nu}(y)\cap\bigcup_{z \in \mathcal{F}^{\hat{c}}_{\nu}(x)}\mathcal{F}^{s}_{\nu}(z),$$
        and the map $(x,y)\mapsto\pi^{wu}(x,y)$ is continuous.
    \end{enumerate}
\end{proposition}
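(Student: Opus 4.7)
The plan is to apply Lemma \ref{lem:bonhettriple} directly to the three continuous foliations with $C^1$ leaves $(\mathcal{F}^{wu}, \mathcal{F}^{\hat{c}}, \mathcal{F}^s)$, mirroring the use of that lemma in the proof of Proposition \ref{prop:unstproj}. These three foliations are pairwise transverse and their tangent distributions satisfy
$$E^{wu} \oplus E^{\hat{c}} \oplus E^s = (\mathbb{R}X \oplus E^u) \oplus E^{\hat{c}} \oplus E^s = TM,$$
so the hypotheses of Lemma \ref{lem:bonhettriple} are met. The lemma supplies the constants $C', \delta_0' \in (0,1)$ appearing in the statement and gives the non-emptiness of the intersection set in (ii) whenever $d(x,y) < C'\delta$ and $\delta < \delta_0'$.

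For part (i), I would fix $\nu_0 < \delta_0'$ small enough so that the local product structure induced by the triple of transverse foliations forces the set $\mathcal{F}^{wu}_{\nu_0}(y) \cap \bigcup_{z \in \mathcal{F}^{\hat{c}}_{\nu_0}(x)} \mathcal{F}^s_{\nu_0}(z)$ to contain at most one point. This uniqueness is obtained exactly as in Proposition \ref{prop:unstproj}: in a foliated chart simultaneously adapted to the three foliations, the outer factor $\mathcal{F}^{wu}_{\nu_0}(y)$ and the inner union become pieces of complementary dimension inside a product neighborhood, which can meet in at most one point by the direct-sum decomposition of $TM$.

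For part (ii), the non-emptiness provided by Lemma \ref{lem:bonhettriple} combines with the uniqueness from (i) to yield the singleton $\pi^{wu}(x,y)$; the equality with the singleton at scale $\nu$ follows from monotonicity of the intersection under $\nu \le \nu_0$. Continuity of $(x,y) \mapsto \pi^{wu}(x,y)$ is inherited from continuity of each of the three foliations involved, along the same lines as for $\pi^u$ at the end of Proposition \ref{prop:unstproj}.

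The principal subtlety, compared with Proposition \ref{prop:unstproj}, is that no auxiliary triple can be invoked to enlarge $\mathcal{F}^{\hat{c}}$ to the flow-saturated foliation $\mathcal{F}^c = \mathcal{F}^{w\hat{c}}$: the natural candidate $(\mathcal{F}^{wu}, \mathcal{F}^c, \mathcal{F}^s)$ fails to be a triple of transverse foliations, since $\mathcal{F}^{wu}$ and $\mathcal{F}^c$ both contain the flow direction $\mathbb{R}X$. The uniqueness in (i) must therefore be extracted directly from the triple $(\mathcal{F}^{wu}, \mathcal{F}^{\hat{c}}, \mathcal{F}^s)$, without the intermediate enlargement step used in Proposition \ref{prop:unstproj}; this is the main (minor) deviation from the argument for $\pi^u$ and from the discrete case treated in \cite{bohnet_partially_2014}.
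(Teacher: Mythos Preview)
Your argument for part (ii) is fine and matches the paper. The gap is in part (i).

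You claim uniqueness follows from ``a foliated chart simultaneously adapted to the three foliations'', appealing to the proof of Proposition~\ref{prop:unstproj} as a model. But this misreads that proof: Lemma~\ref{lem:bonhettriple} gives only \emph{non-emptiness}. The uniqueness of $\pi^u$ in Proposition~\ref{prop:unstproj} is not obtained from a simultaneous chart; it is obtained by invoking the result of \cite{bohnet_partially_2014} for the partially hyperbolic diffeomorphism $\varphi^1$, and that result uses the dynamics---specifically, the uniform expansion of $\varphi^1$ along $\mathcal{F}^u$---to rule out two intersection points. There is no general statement that three transverse complementary $C^0$ foliations admit a simultaneous product chart, and the set $\bigcup_{z\in\mathcal{F}^{\hat c}_\nu(x)}\mathcal{F}^s_\nu(z)$ is only a $C^0$ object whose tangent space, where it exists, need not equal $E^{\hat c}\oplus E^s$ away from $\mathcal{F}^{\hat c}_\nu(x)$. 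So the ``complementary dimension'' intersection count is not justified. The paper's own Remark immediately after the proposition makes exactly this point: the dynamical argument of \cite{bohnet_partially_2014} fails here because $\varphi^1$ does not uniformly expand along $\mathcal{F}^{wu}$.

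The paper's proof of (i) instead \emph{reduces} to the already-established Proposition~\ref{prop:unstproj}. Given two hypothetical intersection points $w_1,w_2\in\mathcal{F}^{wu}_\nu(y)$, one uses Lemma~\ref{lem:equivmetricweak} to write $\mathcal{F}^{wu}_\nu(y)\subset\bigcup_{z\in\mathcal{F}^u_{C_2\nu}(y)}\Phi_{C_2\nu}(z)$ and finds small times $t_1,t_2$ with $\varphi^{t_i}(w_i)\in\mathcal{F}^u_{C_2\nu}(y)$. Since each $w_i\in\mathcal{F}^s_\nu(z_i)$ for some $z_i\in\mathcal{F}^{\hat c}_\nu(x)$, the flowed points $\varphi^{t_i}(w_i)$ lie in $\mathcal{F}^u_{\mu_0}(y)\cap\bigcup_{z\in\mathcal{F}^{\hat c}_{\mu_0}(x)}\mathcal{F}^{ws}_{\mu_0}(z)$, and Proposition~\ref{prop:unstproj} forces $\varphi^{t_1}(w_1)=\varphi^{t_2}(w_2)$. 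Now $w_1,w_2$ are on the same orbit; two applications of Lemma~\ref{lem:transvfol} (first $\mathcal{F}^{ws}\cap\mathcal{F}^{\hat c}$ to get $z_1=z_2$, then $\Phi\cap\mathcal{F}^s$) give $w_1=w_2$. This is the missing idea: you must route the uniqueness through the strong unstable projection via the flow, rather than argue directly from transversality.
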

\begin{proof}
    As for the first point, assume on the contrary that for every $\nu>0$, there exist $x,y \in M$ with $d(x,y)<\nu$ and two distinct points $w_1,w_2$ in $\mathcal{F}^{wu}_{\nu}(y)\cap\bigcup_{z \in \mathcal{F}^{\hat{c}}_{\nu}(x)}\mathcal{F}^{s}_{\nu}(z)$.
    Take $\nu<\frac{\delta_0}{C_2}$. By Lemma \ref{lem:equivmetricweak}, 
    $w_1,w_2\in  \mathcal{F}^{wu}_{\nu}(y)\subset \bigcup_{z \in \mathcal{F}^u_{C_2\nu}(y) }\Phi_{C_2\nu}(z).$
    Therefore by Lemma \ref{lem:equivmetricflo}, there exist and $t_1,t_2 \in \left ]-C_2^2\nu, C_2^2\nu \right [$ such that $\varphi^{t_1}(w_1),\varphi^{t_2}(w_2)\in \mathcal{F}^u_{C_2\nu}(y)$.
    Also, for $i \in \{1,2\}$, there exists $z_i \in \mathcal{F}^{\hat{c}}_{\nu}(x)$ such that $w_i\in \mathcal{F}^{s}_{\nu}(z_i)$. 
    By theses lemmas, if $\nu$ is sufficiently small, this implies that for $i \in \{1,2\}$, $\varphi^{t_i}(w_i) \in \mathcal{F}^{ws}_{\mu_0}(z_i)$.
    By Proposition \ref{prop:unstproj}, $\varphi^{t_1}(w_1)=\varphi^{t_2}(w_2)$.
    In particular $w_2$ and $w_1$ both belong to the same orbit.
    Therefore, if $\nu$ is sufficiently small, $z_2\in \mathcal{F}^{ws}_{\mu_*}(z_1)\cap \mathcal{F}^{\hat{c}}_{\mu_*}(z_1)=\{z_1\}$ (see the remark right after Lemma \ref{lem:transvfol}), i.e. $z_1=z_2$.
    This implies, if $\nu$ is sufficiently small, that $w_2\in \Phi_{\mu_*}(w_1)\cap \mathcal{F}^{s}_{\mu_*}(w_1)=\{w_1\}$ which is absurd since $w_1\neq w_2$.\\
    The last point comes from Lemma \ref{lem:bonhettriple}.
    \end{proof}

\begin{remark}
\leavevmode
    \begin{itemize}
        \item The proof given in \cite{bohnet_partially_2014} does not seem to apply for the weak unstable foliation instead of the unstable one. This comes from the fact two points on the same local weak unstable leaf are not necessarily expanded by the action of the flow.
        \item For $x,y \in M$ such that $d(x,y)<C\nu$ where $\nu < \nu_0$, it comes:
        $$y\in \bigcup_{z \in \mathcal{F}^{\hat{c}}_{\mu}(x)}\mathcal{F}^{s}_{\mu}(z) \iff \pi^{wu}(x,y)=y.$$
        
    \end{itemize}
    
\end{remark}

The previous result allows us to prove dynamical coherence of the flow from completeness of the subcenter foliation.

\begin{lemma}
\label{lem:weakunstprojinv}
    Let $(\varphi^t)$ a smooth partially hyperbolic flow with a flow invariant subcenter foliation $\mathcal{F}^{\hat{c}}$ with $C^1$-leaves.\\
    Then there exist $\nu_0'>0$ such that 
    for every $x,y\in M$ with $d(x,y)<\nu_0'$ and $d(\varphi^1(x),\varphi^1(y))<\mu_0'$, $\pi^{wu}(x,y)$ and $\pi^{wu}\left(\varphi^1(x), \varphi^1(y)\right)$ are well defined and satisfy 
    $$\pi^{wu}\left(\varphi^1(x), \varphi^1(y)\right)=\varphi^1\left(\pi^{wu}(x,y)\right).$$
\end{lemma}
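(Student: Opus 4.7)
The plan is to follow line-by-line the argument of Lemma \ref{lem:unstprojinv}, exploiting the fact that the three foliations appearing in the definition of $\pi^{wu}$, namely $\mathcal{F}^{wu}$, $\mathcal{F}^{\hat{c}}$ and $\mathcal{F}^s$, are all flow-invariant (for $\mathcal{F}^{wu}$ this is by Lemma \ref{lem:weakfol} applied to the flow-invariant foliation $\mathcal{F}^u$, and for $\mathcal{F}^{\hat{c}}$ and $\mathcal{F}^s$ this is by hypothesis), so that the defining conditions of the weak unstable projection are preserved under $\varphi^1$ as long as distances do not escape the range of validity of Proposition \ref{prop:weakunstproj}.

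Concretely, let $\Lambda := \sup_{w \in M} \|d_w \varphi^1\| > 1$, finite by compactness of $M$, and set $\nu_0' := C'\nu/\Lambda$ for some $\nu < \nu_0$ to be fixed below (here $\nu_0$ and $C'$ are those of Proposition \ref{prop:weakunstproj}). Assume $d(x,y) < \nu_0'$ and $d(\varphi^1(x),\varphi^1(y)) < \nu_0'$. Then both $p := \pi^{wu}(x,y)$ and $\pi^{wu}(\varphi^1(x),\varphi^1(y))$ are well defined by point (ii) of Proposition \ref{prop:weakunstproj}. By the characterization in that same point, there exists $z \in \mathcal{F}^{\hat{c}}_{\nu_0'}(x)$ with $p \in \mathcal{F}^s_{\nu_0'}(z)$, and moreover $p \in \mathcal{F}^{wu}_{\nu_0'}(y)$.

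Applying $\varphi^1$ and using the Lipschitz bound $\Lambda$ for the induced Riemannian distances on leaves of each foliation (which is valid because $\varphi^1$ is a smooth diffeomorphism preserving each foliation, so its restriction to any leaf is a $C^\infty$ diffeomorphism onto a leaf), we obtain
\[
d^{wu}(\varphi^1(y),\varphi^1(p)) < \Lambda\nu_0',\quad d^s(\varphi^1(p),\varphi^1(z)) < \Lambda\nu_0',\quad d^{\hat{c}}(\varphi^1(x),\varphi^1(z)) < \Lambda\nu_0'.
\]
Since $\Lambda\nu_0' = C'\nu < \nu_0$, and because $\varphi^1(z) \in \mathcal{F}^{\hat{c}}(\varphi^1(x))$, $\varphi^1(p) \in \mathcal{F}^s(\varphi^1(z))$, $\varphi^1(p) \in \mathcal{F}^{wu}(\varphi^1(y))$ by flow-invariance of the foliations, this places
\[
\varphi^1(p) \in \mathcal{F}^{wu}_{\nu_0}(\varphi^1(y)) \cap \bigcup_{z' \in \mathcal{F}^{\hat{c}}_{\nu_0}(\varphi^1(x))} \mathcal{F}^s_{\nu_0}(z').
\]
By part (i) of Proposition \ref{prop:weakunstproj}, this set contains at most one point, and $\pi^{wu}(\varphi^1(x),\varphi^1(y))$ is one of them; hence $\varphi^1(p) = \pi^{wu}(\varphi^1(x),\varphi^1(y))$, as desired.

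In this argument there is essentially no obstacle: the genuinely delicate step in the theory of the weak unstable projection was the uniqueness statement in Proposition \ref{prop:weakunstproj}, which had to circumvent the failure of uniform expansion along $\mathcal{F}^{wu}$ by reducing via the flow to the unstable case. Once that proposition is available, the present invariance lemma only uses a crude Lipschitz bound for $\varphi^1$ and the $\varphi^1$-invariance of the three foliations involved, exactly as in Lemma \ref{lem:unstprojinv}. The only point deserving minor care is to verify that the size constant in the final membership is $\nu_0$ rather than $C'\nu_0$, so that uniqueness from point (i) — and not only point (ii) — of Proposition \ref{prop:weakunstproj} is what is invoked.
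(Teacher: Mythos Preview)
Your argument is correct and is precisely the approach the paper takes: its proof of this lemma reads in full ``The proof is analogous to that of Lemma \ref{lem:unstprojinv}''. (One harmless constant slip: from $d(x,y)<\nu_0'=C'(\nu/\Lambda)$, Proposition \ref{prop:weakunstproj}(ii) gives $p\in\mathcal{F}^{wu}_{\nu/\Lambda}(y)$, $z\in\mathcal{F}^{\hat c}_{\nu/\Lambda}(x)$, $p\in\mathcal{F}^{s}_{\nu/\Lambda}(z)$ with size $\nu/\Lambda$ rather than $\nu_0'$; after applying $\varphi^1$ the sizes become $<\Lambda\cdot\nu/\Lambda=\nu<\nu_0$ and the rest of your argument goes through unchanged.)
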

\begin{proof}
    The proof is analogous to that of Lemma \ref{lem:unstprojinv}.
\end{proof}

\begin{lemma}
\label{lem:piwu=phi^t(piu)}
    Let $(\varphi^t)$ a smooth partially hyperbolic flow with a flow invariant subcenter foliation $\mathcal{F}^{\hat{c}}$ with $C^1$-leaves.\\
    Then there exists $\nu_0''>0$ such that 
    for every $x,y\in M$ with $d(x,y)<\nu_0''$,  $\pi^{u}(x,y)$ and  $\pi^{wu}(x,y)$ are well defined and there exists a unique number $t=t(x,y)\in \left] -\nu_0'',\nu_0''\right[$ such that 
    $$\varphi^{t(x,y)}\left(\pi^{u}(x,y) \right)= \pi^{wu}(x,y).$$
\end{lemma}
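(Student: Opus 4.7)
The plan is to realise the desired $t(x,y)$ explicitly as the flow parameter in the local product decomposition of $\mathcal{F}^{ws}$ as $\Phi$-orbits through $\mathcal{F}^s$-leaves, and then to recover $\pi^{wu}(x,y)$ as the resulting $\mathcal{F}^s$-component via the uniqueness clause of Proposition~\ref{prop:weakunstproj}.

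First I would fix $\nu_0''$ small enough that, whenever $d(x,y)<\nu_0''$, both $\pi^u(x,y)$ and $\pi^{wu}(x,y)$ are defined, by Propositions~\ref{prop:unstproj} and~\ref{prop:weakunstproj}. Shrinking $\nu_0''$ further, I would also ensure a uniform local freeness of the flow: $\varphi^t(p)=p$ with $|t|<2\nu_0''$ forces $t=0$ for every $p\in M$. This is available because $X$ vanishes nowhere and $M$ is compact, so the flow-box theorem provides flow-boxes of uniform size.

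The heart of the argument is then the following. Set $p:=\pi^u(x,y)$, and pick $z_1\in\mathcal{F}^{\hat c}_{\nu_0''}(x)$ with $p\in\mathcal{F}^{ws}_{\nu_0''}(z_1)$. Applying Lemma~\ref{lem:equivmetricweak} to $\mathcal{F}^{ws}$ (the flow-saturation of $\mathcal{F}^s$), there exist $\tau\in\,]-C_2\nu_0'',C_2\nu_0''[\,$ and $w\in\mathcal{F}^s_{C_2\nu_0''}(z_1)$ with $p=\varphi^{\tau}(w)$. The candidate is $t(x,y):=-\tau$, so that $w=\varphi^{t(x,y)}(p)$. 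I would then check that $w=\pi^{wu}(x,y)$: it lies in $\mathcal{F}^s(z_1)$ with $z_1\in\mathcal{F}^{\hat c}(x)$, hence in $\bigcup_{z\in\mathcal{F}^{\hat c}(x)}\mathcal{F}^s(z)$, and it lies in $\mathcal{F}^{wu}(y)$ since $p\in\mathcal{F}^u(y)\subset\mathcal{F}^{wu}(y)$ and $\mathcal{F}^{wu}$ is flow-invariant. To invoke the uniqueness clause of Proposition~\ref{prop:weakunstproj} one needs all local radii to lie below $\nu_0$; for the weak-unstable radius of $w$ around $y$ I would combine $p\in\mathcal{F}^{u}_{\nu_0''}(y)$ with $|\tau|<C_2\nu_0''$ via a second application of Lemma~\ref{lem:equivmetricweak} (now to $\mathcal{F}^{wu}$), obtaining $w\in\mathcal{F}^{wu}_{K\nu_0''}(y)$ for some absolute $K$. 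A small enough $\nu_0''$ makes every radius admissible and pins down $w=\pi^{wu}(x,y)$.

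Uniqueness of $t(x,y)$ in $]-\nu_0'',\nu_0''[\,$ is then immediate from the local freeness arranged above: if $\varphi^{t_1}(p)=\varphi^{t_2}(p)$ with $t_1,t_2\in\,]-\nu_0'',\nu_0''[\,$, then $\varphi^{t_1-t_2}(p)=p$ with $|t_1-t_2|<2\nu_0''$, so $t_1=t_2$. I foresee no conceptual difficulty; the main task is the careful bookkeeping of the various constants $C_1,C_2,C,C'$ and radii $\mu_0,\nu_0,\mu_*$ so that a single $\nu_0''$ serves every invocation above.
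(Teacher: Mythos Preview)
Your proposal is correct and follows essentially the same approach as the paper: both arguments use Lemma~\ref{lem:equivmetricweak} to split a weak local leaf into a flow segment through a strong local leaf, then invoke the uniqueness clause of the appropriate projection proposition, and finally appeal to local freeness of the flow (via flow boxes on the compact $M$) for uniqueness of $t$. The only cosmetic difference is the direction: the paper starts from $\pi^{wu}(x,y)\in\mathcal{F}^{wu}_{\nu}(y)$, decomposes the weak unstable side, and flows to land in the set defining $\pi^u$, whereas you start from $\pi^u(x,y)\in\mathcal{F}^{ws}_{\nu}(z_1)$, decompose the weak stable side, and flow to land in the set defining $\pi^{wu}$; these are mirror images of the same computation.
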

\begin{proof}
    Let $x,y \in M$ such that $d(x,y)<\nu$, where $\nu<\min(\mu_0, \nu_0)$, so that $\pi^{u}(x,y)$ and  $\pi^{wu}(x,y)$ exist.
    As the proof of Proposition \ref{prop:weakunstproj} shows, if $\nu$ is sufficiently small independently of $x,y \in M$, there exists $t \in \left]-C_2^2\nu,C_2^2\nu \right [$ such that 
    $$\varphi^t(\pi^{wu}(x,y)) \in \mathcal{F}^{u}_{\mu_0}(y)\cap\bigcup_{z \in \mathcal{F}^{\hat{c}}_{\mu_0}(x)}\mathcal{F}^{ws}_{ \mu_0}(z)=\{\pi^{u}(x,y)\}.$$
    This shows the existence. We now prove the uniqueness.
    \begin{claim}
        $s:=\inf\{T>0, \text{ there exists }p\in M \text{ periodic with period }T\}>0$
    \end{claim}
    \begin{proof}[Proof of the claim]
        Otherwise, there would exist a sequence $(T_n)$ of positive numbers converging to $0$ and a sequence of points $(p_n)$ of $M$ such that for every $n \in \mathbb N$, $\varphi^{T_n}(p_n)=p_n$.
        After extraction ($M$ is compact), we can assume that $(p_n)$ converges to some $p \in M$. 
        Let $(U,\psi)$ a flow box centered at $p$. Then for some large $n$, $p_n \in U$ so $\psi(p_n)=\psi(\varphi^{T_n}(p_n))=\psi(p_n)+T_ne_1$ which is absurd since $T_n\neq 0$. 
        
    \end{proof}
    If $\nu<\frac{s}{20C_2^2}$, then such $t$ is necessarily unique, otherwise there would exist $t_1,t_2 \in \left ]-C_2^2\nu, C_2^2\nu \right [$ such that $\varphi^{t_1-t_2}(\pi^{u}(x,y))=\pi^{u}(x,y)$, and therefore $ t_1-t_2\in \left ]-2C_2^2\nu, 2C_2^2\nu \right [] $ would necessarily equal $0$.
\end{proof}

\begin{remark}
    As the proof of this last Lemma shows, $|t(x,y)|<C_2^2\nu$ if $d(x,y)<\nu$.
\end{remark}

\begin{proposition}
\label{prop:dyncohsubcen}
    Let $(\varphi^t)$ a smooth partially hyperbolic flow with a flow invariant subcenter foliation $\mathcal{F}^{\hat{c}}$ with $C^1$-leaves. Assume that the subcenter foliation $\mathcal{F}^c$ is complete.\\
    Then there exists a continuous foliation $\mathcal{F}^{\hat{c}s}$ with $C^1$ leaves tangent to $E^{\hat{c}s}$.
    Its leaf at $x \in M$ is $\mathcal{F}^{\hat{c}s}(x):=\bigcup_{z \in \mathcal{F}^{\hat{c}}(x)}\mathcal{F}^{s}(z)$.
    In particular, for every $t \in \mathbb R$ and $x \in M$, $\varphi^t(\mathcal{F}^{\hat{c}s}(x))=\mathcal{F}^{\hat{c}s}(\varphi^t(x))$.
    \end{proposition}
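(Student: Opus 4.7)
The proof closely parallels that of Proposition \ref{prop:dyncohcent}, with the weak unstable projection $\pi^{wu}$ from Proposition \ref{prop:weakunstproj} playing the role of $\pi^{u}$.

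I begin by noting that completeness of $\mathcal{F}^{\hat{c}}$ entails completeness of the center foliation $\mathcal{F}^c$. Indeed, using $\mathcal{F}^c(w)=\bigcup_{t}\varphi^t(\mathcal{F}^{\hat{c}}(w))$ from Lemma \ref{lem:weakfol} together with flow invariance of $\mathcal{F}^s$ and $\mathcal{F}^u$, one computes
$$\bigcup_{w\in \mathcal{F}^*(x)}\mathcal{F}^c(w)=\bigcup_{t}\varphi^t\Bigl(\bigcup_{w\in \mathcal{F}^*(x)}\mathcal{F}^{\hat{c}}(w)\Bigr)=\bigcup_{t}\varphi^t\Bigl(\bigcup_{z\in \mathcal{F}^{\hat{c}}(x)}\mathcal{F}^*(z)\Bigr)=\bigcup_{z\in \mathcal{F}^c(x)}\mathcal{F}^*(z),$$
for $*\in\{s,u\}$. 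Applying Proposition \ref{prop:dyncohcent} I therefore obtain the flow-invariant $C^1$ foliation $\mathcal{F}^{cs}$ tangent to $E^{cs}$, and the identity $\mathcal{F}^{cs}(x)=\bigcup_{t}\varphi^{t}(\mathcal{F}^{\hat{c}s}(x))$ shows that the candidate leaf $\mathcal{F}^{\hat{c}s}(x)$ is a flow-invariant subset of the $C^1$ immersed submanifold $\mathcal{F}^{cs}(x)$.

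Next, I establish the existence of a uniform $\delta>0$ such that $\mathcal{F}^{\hat{c}s}_\delta(x):=\bigcup_{z\in\mathcal{F}^{\hat{c}}_\delta(x)}\mathcal{F}^s_\delta(z)$ is a $C^1$ immersed submanifold of $M$ tangent to $E^{\hat{c}s}$. Since $E^{\hat{c}s}$ is not a center-stable distribution of any time-$t$ map of the flow, Theorem 6.1 of \cite{hirsch_invariant_1977} cannot be invoked directly as in Proposition \ref{prop:dyncohcent}. I work instead inside the already-constructed $C^1$ leaf $\mathcal{F}^{cs}(x)$: the restricted flow is $C^1$ with nowhere-vanishing generator $X$, and $T\mathcal{F}^{cs}$ decomposes continuously as $E^s\oplus E^{\hat{c}}\oplus \mathbb{R}X$. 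Combining the local product chart of Corollary \ref{cor:Fcs=FçxFsloc} with a flow-box for $\varphi^t|_{\mathcal{F}^{cs}(x)}$, I exhibit $\mathcal{F}^{\hat{c}s}_\delta(x)$ as a $C^1$ codimension-one slice of $\mathcal{F}^{cs}_\delta(x)$ transverse to the flow, with tangent space $E^{\hat{c}s}$ everywhere. The global identity $\mathcal{F}^{\hat{c}s}(x)=\bigcup_{z\in \mathcal{F}^{\hat{c}}(x)}\mathcal{F}^s(z)=\bigcup_{w\in \mathcal{F}^s(x)}\mathcal{F}^{\hat{c}}(w)$, granted by completeness, then promotes $\mathcal{F}^{\hat{c}s}(x)$ to a $C^1$ immersed submanifold tangent to $E^{\hat{c}s}$.

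Finally, I construct a foliation chart at $x$ using $\pi^{wu}$. Choosing $\gamma,c>0$ small enough, the map
$$\eta:\mathcal{F}^{\hat{c}s}_{c\gamma}(x)\times \mathcal{F}^{wu}_{c\gamma}(x)\to M,\qquad (y,z)\mapsto \mathcal{F}^{wu}_{\gamma}(y)\cap\mathcal{F}^{\hat{c}s}_{\gamma}(z),$$
is a well-defined homeomorphism onto its image by Proposition \ref{prop:weakunstproj}. Identifying the two factors with open subsets of $\mathbb{R}^{d_s+d_{\hat{c}}}$ and $\mathbb{R}^{d_u+1}$ respectively, $\eta^{-1}$ is an adapted foliation chart for $\mathcal{F}^{\hat{c}s}$ whose restriction to plaques of $\mathcal{F}^{\hat{c}s}$ is $C^1$. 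The relation $\varphi^t(\mathcal{F}^{\hat{c}s}(x))=\mathcal{F}^{\hat{c}s}(\varphi^t(x))$ follows at once from the flow invariance of $\mathcal{F}^{\hat{c}}$ and $\mathcal{F}^s$. The principal obstacle is the second step: without a direct Hirsch--Pugh--Shub analog for $E^{\hat{c}s}$, the $C^1$ regularity of the local plaques has to be extracted from the previously secured regularity of $\mathcal{F}^{cs}$ together with the transverse flow-box structure inside it.
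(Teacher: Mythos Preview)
Your approach is essentially the same as the paper's: reduce to the center foliation to obtain $\mathcal{F}^{cs}$, realize local $\mathcal{F}^{\hat{c}s}$-plaques as flow-transversals inside the $C^1$ leaves $\mathcal{F}^{cs}(x)$ to secure $C^1$ regularity, and then build foliation charts via $\pi^{wu}$. The paper is more explicit in the middle step---it constructs the $C^1$ atlas on $\mathcal{F}^{\hat{c}s}_\gamma(x)$ by taking small $C^1$ disks $D_y\subset\mathcal{F}^{cs}(x)$ tangent to $E^{\hat{c}s}$ at $y$, projecting them along the flow via Lemma~\ref{lem:piwu=phi^t(piu)}, and observing that the resulting transition maps are holonomies of the $C^1$ foliation $\Phi|_{\mathcal{F}^{cs}(x)}$---whereas your appeal to Corollary~\ref{cor:Fcs=FçxFsloc} (proved under the trivial-holonomy hypothesis, not merely completeness) plus a flow-box leaves the $C^1$ nature of the slice somewhat implicit.
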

\begin{proof}
    We first prove that for every $x \in M$, $\mathcal{F}^{\hat{c}s}(x)$ is an immersed $C^1$ submanifold of $M$.
    Let $\gamma>0$ small enough so that $$U_\gamma:=\bigcup_{|t|<\gamma} \varphi^t \left(\bigcup_{z \in \mathcal{F}_\gamma^{\hat{c}}(x)}\mathcal{F}_\gamma^{s}(z) \right)$$
    is an open subset thus an embedded $C^1$ submanifold of $\mathcal{F}^{cs}(x)$.
    Consider, for every $y \in \mathcal{F}^{\hat{c}s}_\gamma(x):=\bigcup_{z \in \mathcal{F}_\gamma^{\hat{c}}(x)}\mathcal{F}_\gamma^{s}(z)$, a small embedded $C^1$ disk $D_y$ in $U_\gamma$ containing $y$ and tangent to $E^{\hat{c}s}$ at $y$.
    If $D_y$ is small enough, it comes by Lemma \ref{lem:piwu=phi^t(piu)} that for every $w \in D_y$, there exists a unique $t_w \in \left ] -\nu_0'', \nu_0'' \right [$ such that $\varphi^{t_w}(w)\in \bigcup_{z \in \mathcal{F}_\gamma^{\hat{c}}(y)}\mathcal{F}_\gamma^{s}(z)$, since $\pi^u(y,w)=w$ for $w\in D_y$.
    The map $\psi_y:w \in D_y \mapsto\varphi^{t_w}(w)\in \mathcal{F}^{\hat{c}s}_\gamma(y)$ is a homeomorphism onto its image which is a neighborhood of $y$ in $\mathcal{F}^{\hat{c}s}_\gamma(x)$.
    This proves that $(\psi_y(D_y), \psi_y^{-1})_{y \in \mathcal{F}^{\hat{c}s}_\gamma(x) }$ defines a $C^0$ atlas of charts for $\mathcal{F}^{\hat{c}s}_\gamma(x)$.
    Moreover, by construction, for $y,y' \in \mathcal{F}^{\hat{c}s}_\gamma(x)$ such that $\psi_y(D_y) \cap \psi_{y'}(D_{y'}) \neq \emptyset$, the transition map $\psi_{y}^{-1}\circ \psi_{y'}$ is given by holonomy maps for the restriction of $\Phi$ to $\mathcal{F}^{cs}(x)$ which is a $C^1$ foliation of $\mathcal{F}^{cs}(x)$.
    Therefore $(\psi_y(D_y), \psi_y^{-1})_{y \in \mathcal{F}^{\hat{c}s}_\gamma(x) }$ defines a $C^1$ atlas of charts for $\mathcal{F}^{\hat{c}s}_\gamma(x)$ which makes the latter a $C^1$ immersed submanifold of $\mathcal{F}^{cs}(x)$ and thus of $M$, tangent to $E^{\hat{c}s}$ by construction.
    As a result, $\mathcal{F}^{\hat{c}s}(x)$ is a $C^1$ immersed submanifold of $\mathcal{F}^{cs}(x)$ and of $M$.\\
    We now construct a chart at $x \in M$ adapted to $\mathcal{F}^{\hat{c}s}$.
    Let $\gamma,c'>0$ small enough so that the map
    $$\eta:\left \{\begin{array}{ccl}
         \mathcal{F}_{c'\gamma}^{\hat{c}s}(x)\times \mathcal{F}_{c'\gamma}^{wu}(x) & \to & M \\
         (y,z) & \to & \mathcal{F}_{\gamma}^{wu}(y)\cap \mathcal{F}_{\gamma}^{\hat{c}s}(z)
    \end{array} \right.$$
    is a well-defined homeomorphism onto its image by Proposition \ref{prop:weakunstproj}. 
    By identifying $\mathcal{F}_{c'\gamma}^{\hat{c}s}(x)$ and $\mathcal{F}_{c'\gamma}^{wu}(x)$ to open connected subsets of $\mathbb R^{d_{\hat{c}}+d_s}$ and $\mathbb R^{d_{wu}}$ respectively, we see that $\eta^{-1}$ is an adapted foliation chart for $\mathcal{F}^{\hat{c}s}$ whose restriction at plaques of $\mathcal{F}^{\hat{c}s}$ is $C^1$.
\end{proof}

\begin{lemma}
\label{lem:t_inv}
    Let $(\varphi^t)$ a smooth partially hyperbolic flow with a flow invariant subcenter foliation $\mathcal{F}^{\hat{c}}$ with $C^1$-leaves.\\
    Then there exists $\nu_0'''>0$ such that 
    for every $x,y\in M$ with $d(x,y)<\nu_0'''$ and $d(\varphi^1(x),\varphi^1(y))<\nu_0'''$, $t(x,y)$ and $t(\varphi^1(x),\varphi^1(y))$ are well defined and satisfy 
    $$t(\varphi^1(x),\varphi^1(y))=t(x,y).$$
\end{lemma}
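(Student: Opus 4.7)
The plan is to combine the $\varphi^1$-equivariance of both projections (Lemmas \ref{lem:unstprojinv} and \ref{lem:weakunstprojinv}) with the uniqueness statement in Lemma \ref{lem:piwu=phi^t(piu)}, whose proof rests on the absence of arbitrarily small periodic orbits.

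First I would choose $\nu_0''' > 0$ smaller than $\min(\mu_0', \nu_0', \nu_0'')$, so that for $x,y$ with $d(x,y) < \nu_0'''$ and $d(\varphi^1(x), \varphi^1(y)) < \nu_0'''$ all four quantities $\pi^u(x,y)$, $\pi^{wu}(x,y)$, $\pi^u(\varphi^1(x), \varphi^1(y))$, $\pi^{wu}(\varphi^1(x), \varphi^1(y))$ and hence $t(x,y)$, $t(\varphi^1(x), \varphi^1(y))$ are well-defined.

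Next, write the defining identity at $(\varphi^1(x), \varphi^1(y))$:
\[
\varphi^{t(\varphi^1(x),\varphi^1(y))}\bigl(\pi^{u}(\varphi^1(x),\varphi^1(y))\bigr) \;=\; \pi^{wu}(\varphi^1(x),\varphi^1(y)).
\]
By Lemmas \ref{lem:unstprojinv} and \ref{lem:weakunstprojinv}, the right-hand side equals $\varphi^1(\pi^{wu}(x,y)) = \varphi^1(\varphi^{t(x,y)}(\pi^u(x,y)))$ and the argument on the left equals $\varphi^1(\pi^u(x,y))$. Using the group property of the flow and cancelling one $\varphi^1$ (i.e.\ applying $\varphi^{-1}$ to both sides), we obtain
\[
\varphi^{t(\varphi^1(x),\varphi^1(y))}\bigl(\pi^{u}(x,y)\bigr) \;=\; \varphi^{t(x,y)}\bigl(\pi^{u}(x,y)\bigr),
\]
so $\varphi^{T}$ fixes $\pi^u(x,y)$, where $T := t(\varphi^1(x),\varphi^1(y)) - t(x,y)$.

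The main (and only) obstacle is ruling out $T \neq 0$. By the remark following Lemma \ref{lem:piwu=phi^t(piu)}, both $|t(x,y)|$ and $|t(\varphi^1(x),\varphi^1(y))|$ are bounded by $C_2^2 \nu_0'''$, hence $|T| < 2C_2^2 \nu_0'''$. If we choose $\nu_0''' < s/(2C_2^2)$ where $s > 0$ is the infimum of periods of periodic orbits supplied by the claim inside the proof of Lemma \ref{lem:piwu=phi^t(piu)} (recall that the flow has no fixed points since $X$ is nowhere vanishing), then $\varphi^{T}(\pi^u(x,y)) = \pi^u(x,y)$ with $|T| < s$ forces $T = 0$. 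This yields $t(\varphi^1(x),\varphi^1(y)) = t(x,y)$, as desired.
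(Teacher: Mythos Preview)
Your argument is correct and is exactly the unpacking the paper has in mind: its proof is the single line ``This is an immediate consequence of Lemma~\ref{lem:unstprojinv}, Lemma~\ref{lem:weakunstprojinv} and Lemma~\ref{lem:piwu=phi^t(piu)},'' and you have spelled out precisely how these combine (equivariance of both projections plus the uniqueness of $t$ via the positive lower bound on periods). The only cosmetic difference is that you re-run the period argument explicitly rather than invoking the uniqueness clause of Lemma~\ref{lem:piwu=phi^t(piu)} directly, which is harmless.
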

\begin{proof}
    This is an immediate consequence of Lemma \ref{lem:unstprojinv}, Lemma \ref{lem:weakunstprojinv} and Lemma \ref{lem:piwu=phi^t(piu)}.
\end{proof}

\begin{lemma}
\label{lem:weakunstprojtrivholo}
    Let $(\varphi^t)$ a smooth partially hyperbolic flow with a flow invariant subcenter foliation $\mathcal{F}^{\hat{c}}$ with $C^1$-leaves and trivial holonomy.\\
    Then there exist $\nu_1>0$ such that 
    for every $x\in M$ and subcenter leaf $L^{\hat{c}}$ with $d(x,L)<\nu_1C'$, the set 
    $$\mathcal{F}^{wu}_{\nu_1}(x)\cap\bigcup_{z \in L^{\hat{c}}}\mathcal{F}^{s}_{ \nu_1}(z)$$
    consists of a single point.\\
    Moreover, for every $x \in M$, $\mathcal{F}^{s}_{2\nu_1}(x)\cap\mathcal{F}^{\hat{c}}(x)=\{x\}$.
\end{lemma}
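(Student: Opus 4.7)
The plan is to mimic the proof of Lemma \ref{lem:unstprojtrivholo} almost verbatim, replacing the pair $(\mathcal{F}^u,\mathcal{F}^{ws})$ with $(\mathcal{F}^{wu},\mathcal{F}^s)$ and using Proposition \ref{prop:weakunstproj} in place of Proposition \ref{prop:unstproj}. The constants $C',\nu_0$ from Proposition \ref{prop:weakunstproj} and the compact-leaf comparison of Remark \ref{rem:fol} for $\mathcal{F}^{\hat{c}}$ will give all the smallness one needs. The moreover part is immediate from Lemma \ref{lem:transvcompafol} and does not require trivial holonomy for this specific conclusion: one applies it to the transverse continuous foliations $\mathcal{F}^s$ and $\mathcal{F}^{\hat{c}}$, the latter being compact.

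\emph{Existence.} Fix $\nu_1<\nu_0$ and suppose $d(x,L^{\hat{c}})<\nu_1 C'$. Since $L^{\hat{c}}$ is compact, the infimum is attained at some $y\in L^{\hat{c}}$ with $d(x,y)<\nu_1 C'$. Then Proposition \ref{prop:weakunstproj}(ii) applied to $(y,x)$ gives a point
\[
\pi^{wu}(y,x)\in \mathcal{F}^{wu}_{\nu_1}(x)\cap \bigcup_{z\in \mathcal{F}^{\hat{c}}_{\nu_1}(y)}\mathcal{F}^s_{\nu_1}(z)\subset \mathcal{F}^{wu}_{\nu_1}(x)\cap \bigcup_{z\in L^{\hat{c}}}\mathcal{F}^s_{\nu_1}(z),
\]
so the intersection is non-empty.

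\emph{Uniqueness.} Suppose we have two points $w_1,w_2$ in the intersection, with $w_i\in \mathcal{F}^s_{\nu_1}(z_i)$ for some $z_i\in L^{\hat{c}}$. By the triangle inequality,
\[
d(z_1,z_2)\leq d(z_1,w_1)+d(w_1,x)+d(x,w_2)+d(w_2,z_2)<4\nu_1.
\]
Since $z_1,z_2$ lie on the same compact leaf $L^{\hat{c}}$, the second bullet of Remark \ref{rem:fol} yields a constant (depending only on $\mathcal{F}^{\hat{c}}$) such that, for $\nu_1$ small enough, $d^{\hat{c}}(z_1,z_2)<5\nu_1$. Thus $z_2\in \mathcal{F}^{\hat{c}}_{5\nu_1}(z_1)$. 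Choosing further $5\nu_1<\nu_0$, both $w_1$ and $w_2$ belong to
\[
\mathcal{F}^{wu}_{\nu_0}(x)\cap \bigcup_{z\in \mathcal{F}^{\hat{c}}_{\nu_0}(z_1)}\mathcal{F}^s_{\nu_0}(z),
\]
which contains at most one point by Proposition \ref{prop:weakunstproj}(i) (here one also uses $d(x,z_1)<2\nu_1<\nu_0$). Hence $w_1=w_2$.

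\emph{Moreover part.} The foliations $\mathcal{F}^s$ and $\mathcal{F}^{\hat{c}}$ are continuous, transverse, with $C^1$ leaves, and $\mathcal{F}^{\hat{c}}$ is compact. Lemma \ref{lem:transvcompafol} therefore produces a uniform $\mu>0$ with $\mathcal{F}^s_\mu(x)\cap\mathcal{F}^{\hat{c}}(x)=\{x\}$ for all $x\in M$; shrinking $\nu_1$ so that $2\nu_1<\mu$ gives the claim. The only minor subtlety is to choose a single $\nu_1$ simultaneously satisfying $\nu_1<\nu_0$, the $5\nu_1<\nu_0$ bound, the smallness coming from Remark \ref{rem:fol} for $\mathcal{F}^{\hat{c}}$, and $2\nu_1<\mu$; all four constraints depend only on the foliations and not on $x$ or $L^{\hat{c}}$, so such $\nu_1$ exists and the proof is complete.
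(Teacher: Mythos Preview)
Your proof is correct and follows exactly the approach the paper intends: the paper's own proof simply states that it is ``completely analogous to that of Lemma~\ref{lem:unstprojtrivholo}'', and you have carried out precisely that analogy, swapping $(\mathcal{F}^u,\mathcal{F}^{ws},\text{Proposition~\ref{prop:unstproj}},\mu_0,C)$ for $(\mathcal{F}^{wu},\mathcal{F}^{s},\text{Proposition~\ref{prop:weakunstproj}},\nu_0,C')$ and invoking Lemma~\ref{lem:transvcompafol} for the moreover clause.
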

\begin{proof}
    The proof is completely analogous to that of Lemma \ref{lem:unstprojtrivholo}.
\end{proof}

In particular, the previous corollary implies that for every $x\in M$, every point $y$ in $\bigcup_{z \in \mathcal{F}^{\hat{c}}(x)}\mathcal{F}^{s}_{ \nu_1}(z)$ belongs to some $\mathcal{F}^{s}_{ \nu_1}(z)$ for a unique $z\in \mathcal{F}^{\hat{c}}(x)$.

\begin{lemma}
\label{lem:Piwu}
    Let $(\varphi^t)$ a smooth partially hyperbolic flow with a flow invariant subcenter foliation $\mathcal{F}^{\hat{c}}$ with $C^1$-leaves and trivial holonomy.\\
    Then there exist $\nu_2>0$ such that 
    for every $x,y\in M$ with $d(x,y)<\nu_2$ and every $w \in \mathcal{F}^{\hat{c}}(y)$, the set 
        $$\mathcal{F}^{wu}_{\nu_1}(w)\cap\bigcup_{z \in \mathcal{F}^{\hat{c}}(x)}\mathcal{F}^{s}_{ \nu_1}(z)$$
        consists of a single element denoted by $\Pi^{wu}_{x,y}(w)$.\\
        Moreover, the map $w \mapsto \Pi^{wu}_{x,y}(w)$ is continuous.
\end{lemma}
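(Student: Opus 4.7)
The plan is to mimic the proof of Lemma \ref{lem:Piu} verbatim, substituting the weak-unstable projection $\pi^{wu}$ of Proposition \ref{prop:weakunstproj} for the unstable projection $\pi^u$ of Proposition \ref{prop:unstproj}, and invoking Lemma \ref{lem:weakunstprojtrivholo} in place of Lemma \ref{lem:unstprojtrivholo}. The key input is that trivial holonomy of $\mathcal{F}^{\hat{c}}$ makes the leaf space $N := M/\mathcal{F}^{\hat{c}}$ a Hausdorff topological manifold (Theorem \ref{thm:carra}), hence a compact metric space when equipped with the Hausdorff distance.

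First, I would observe that the quotient map $\pi : M \to N$ is continuous; since $M$ is compact it is uniformly continuous. Therefore there exists $\nu_2 \in (0, \nu_1)$ such that whenever $x, y \in M$ satisfy $d(x,y) < \nu_2$, the Hausdorff distance $d_H(\mathcal{F}^{\hat{c}}(x), \mathcal{F}^{\hat{c}}(y))$ is strictly less than $C' \nu_1$, where $C'$ is the constant provided by Proposition \ref{prop:weakunstproj} (and used in Lemma \ref{lem:weakunstprojtrivholo}).

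Second, I would fix such $x, y$ and an arbitrary $w \in \mathcal{F}^{\hat{c}}(y)$. From the Hausdorff bound, $d(w, \mathcal{F}^{\hat{c}}(x)) < C' \nu_1$, so the hypotheses of Lemma \ref{lem:weakunstprojtrivholo} are satisfied with $x$ replaced by $w$ and $L^{\hat{c}} = \mathcal{F}^{\hat{c}}(x)$. That lemma produces a unique point
\[
\Pi^{wu}_{x,y}(w) \in \mathcal{F}^{wu}_{\nu_1}(w) \cap \bigcup_{z \in \mathcal{F}^{\hat{c}}(x)} \mathcal{F}^{s}_{\nu_1}(z),
\]
which is exactly what the statement demands.

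Finally, for continuity of $w \mapsto \Pi^{wu}_{x,y}(w)$, I would argue that locally $\Pi^{wu}_{x,y}$ coincides with the local projection $\pi^{wu}$ from Proposition \ref{prop:weakunstproj} (applied to $w$ and to the unique nearby point of $\mathcal{F}^{\hat{c}}(x)$ that lies within $C' \nu_1$), which is already shown to be continuous there; the uniqueness statement in Lemma \ref{lem:weakunstprojtrivholo} guarantees that these local projections glue into the globally defined $\Pi^{wu}_{x,y}$. I do not expect any serious obstacle: the only delicate point, as in Lemma \ref{lem:Piu}, is justifying that small ambient distance in $M$ forces small Hausdorff distance of subcenter leaves, which is exactly where the trivial-holonomy hypothesis and the resulting metric structure on $N$ are used.
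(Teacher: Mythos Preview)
Your proposal is correct and follows exactly the approach the paper intends: Lemma \ref{lem:Piwu} is stated without proof precisely because it is the verbatim analogue of Lemma \ref{lem:Piu}, obtained by replacing $\pi^u$, Lemma \ref{lem:unstprojtrivholo}, and the constant $C$ with $\pi^{wu}$, Lemma \ref{lem:weakunstprojtrivholo}, and $C'$ respectively. Your identification of the uniform continuity of the quotient map $M\to M/\mathcal{F}^{\hat{c}}$ as the key step matches the paper's proof of Lemma \ref{lem:Piu} exactly.
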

\begin{lemma}
\label{lem:Piwuinv}
    Let $(\varphi^t)$ a smooth partially hyperbolic flow with a flow invariant subcenter foliation $\mathcal{F}^{\hat{c}}$ with $C^1$-leaves and trivial holonomy.\\
    Then there exist $\nu_3>0$ such that 
    for every $x,y\in M$ with $d(x,y)<\nu_3$ and $d(\varphi^1(x),\varphi^1(y))<\nu_3$, for every $w \in \mathcal{F}^{\hat{c}}(y)$, $\Pi^{wu}_{x,y}(w)$ and $\Pi^{wu}_{\varphi^1(x),\varphi^1(y)}(\varphi^1(w))$ are well defined and satisfy 
    $$\Pi^{wu}_{\varphi^1(x),\varphi^1(y)}(\varphi^1(w))=\varphi^1\left(\Pi^{wu}_{x,y}(w) \right).$$
\end{lemma}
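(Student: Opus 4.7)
The plan is to reduce this to Lemma \ref{lem:weakunstprojinv} by expressing $\Pi^{wu}_{x,y}(w)$ as a value of the pointwise weak unstable projection $\pi^{wu}$, exactly in the spirit of the proof of Lemma \ref{lem:Piuinv}. Let $\Lambda:=\sup_{p\in M}\|d_p\varphi^1\|<\infty$ and choose $\nu_3>0$ small enough that $\nu_3<\min(\nu_2,\nu_0'/\Lambda)$ and that additionally all the local radii encountered below stay inside the ranges where Proposition \ref{prop:weakunstproj}, Lemma \ref{lem:weakunstprojtrivholo} and Lemma \ref{lem:Piwu} apply, both for $(x,y)$ and for $(\varphi^1(x),\varphi^1(y))$.

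First I would rewrite $\Pi^{wu}_{x,y}(w)$ using $\pi^{wu}$. By Lemma \ref{lem:weakunstprojtrivholo} there is a unique $x_w\in\mathcal{F}^{\hat{c}}(x)$ such that $\Pi^{wu}_{x,y}(w)\in \mathcal{F}^{s}_{\nu_1}(x_w)$, while by definition $\Pi^{wu}_{x,y}(w)\in \mathcal{F}^{wu}_{\nu_1}(w)$. In particular $d(x_w,w)<2\nu_1$, and for $\nu_3$ small enough Proposition \ref{prop:weakunstproj} applies to $(x_w,w)$, giving
\[
\pi^{wu}(x_w,w)=\Pi^{wu}_{x,y}(w).
\]

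Next I apply Lemma \ref{lem:weakunstprojinv}. Since $\mathcal{F}^{\hat{c}}$ is flow invariant, $\varphi^1(x_w)\in\mathcal{F}^{\hat{c}}(\varphi^1(x))$, and by the choice of $\nu_3$ together with the bound on $\Lambda$, the pair $(x_w,w)$ satisfies both distance hypotheses of Lemma \ref{lem:weakunstprojinv}. Hence
\[
\varphi^1\!\left(\Pi^{wu}_{x,y}(w)\right)=\varphi^1\!\left(\pi^{wu}(x_w,w)\right)=\pi^{wu}\!\left(\varphi^1(x_w),\varphi^1(w)\right).
\]

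Finally I identify the right hand side with $\Pi^{wu}_{\varphi^1(x),\varphi^1(y)}(\varphi^1(w))$. The point $\pi^{wu}(\varphi^1(x_w),\varphi^1(w))$ lies in $\mathcal{F}^{s}_{\nu_1}(\varphi^1(x_w))$ and in $\mathcal{F}^{wu}_{\nu_1}(\varphi^1(w))$, with $\varphi^1(x_w)\in\mathcal{F}^{\hat{c}}(\varphi^1(x))$, so it belongs to $\mathcal{F}^{wu}_{\nu_1}(\varphi^1(w))\cap\bigcup_{z\in\mathcal{F}^{\hat{c}}(\varphi^1(x))}\mathcal{F}^{s}_{\nu_1}(z)$; by the uniqueness statement in Lemma \ref{lem:weakunstprojtrivholo} (and thus in Lemma \ref{lem:Piwu}) this singleton is precisely $\Pi^{wu}_{\varphi^1(x),\varphi^1(y)}(\varphi^1(w))$.

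The only real difficulty is bookkeeping: one must shrink $\nu_3$ so that every intermediate object remains within its domain of definition, in particular so that the $\varphi^1$-image of $\Pi^{wu}_{x,y}(w)$ is still within weak unstable distance $\nu_1$ of $\varphi^1(w)$ and within stable distance $\nu_1$ of $\varphi^1(x_w)$. Since $\Lambda$ is a uniform constant and the hypotheses of the cited lemmas are open conditions, this is routine and uses none of the structural content beyond what is available in the previous lemmas.
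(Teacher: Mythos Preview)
Your proof is correct and is essentially what the paper intends: the paper's own proof is the single line ``immediate by the previous Lemma and Lemma \ref{lem:Piuinv}'' (the citation of \ref{lem:Piuinv} is almost certainly a slip for \ref{lem:weakunstprojinv}, or else means ``by the same argument as'' that lemma), and you have simply unfolded that one-liner, identifying $\Pi^{wu}_{x,y}(w)=\pi^{wu}(x_w,w)$ for the unique $x_w\in\mathcal{F}^{\hat c}(x)$, applying the local invariance of $\pi^{wu}$, and invoking uniqueness on the image side. The bookkeeping you flag at the end---that shrinking $\nu_3$ forces the Hausdorff distance between $\mathcal{F}^{\hat c}(x)$ and $\mathcal{F}^{\hat c}(y)$ to be small, hence (via the radius-scaling in Proposition \ref{prop:weakunstproj}(ii)) places $\Pi^{wu}_{x,y}(w)$ in balls of radius $\nu_1/\Lambda$ rather than $\nu_1$---is exactly the mechanism the paper uses implicitly throughout this section.
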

\begin{proof}
    This is immediate by the previous Lemma and Lemma \ref{lem:Piuinv}.

\end{proof}

\begin{lemma}
\label{lem:Piwu=phi^t(Piu)}
    Let $(\varphi^t)$ a smooth partially hyperbolic flow with a flow invariant subcenter foliation $\mathcal{F}^{\hat{c}}$ with $C^1$-leaves and trivial holonomy.\\
    Then there exist $\nu_4>0$ such that 
    for every $x,y\in M$ with $d(x,y)<\nu_4$ and every $w \in \mathcal{F}^{\hat{c}}(y)$, $\Pi^{u}_{x,y}(w)$ and $\Pi^{wu}_{x,y}(w)$ are well-defined and there exists a unique number $t(x,y,w)\in \left ]-\nu_4, \nu_4 \right[$ such that 
    $$\Pi^{wu}_{x,y}(w)=\varphi^{t(x,y,w)}\left(\Pi^{u}_{x,y}(w) \right).$$
\end{lemma}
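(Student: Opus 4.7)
The plan is to reduce this global statement to the local version established in Lemma \ref{lem:piwu=phi^t(piu)}, by localizing the problem around each $w \in \mathcal{F}^{\hat{c}}(y)$ and exploiting the trivial holonomy of $\mathcal{F}^{\hat{c}}$ to identify local and global projections.

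First I would choose $\nu_4>0$ small enough that $\nu_4 \leq \min(\nu_2, \nu_3, \nu_0'', \mu_1, \nu_1)$ and, by uniform continuity of the quotient map $M \to M/\mathcal{F}^{\hat{c}}$ (compact metric leaf space, as already used in the proof of Lemma \ref{lem:Piu}), such that $d(x,y)<\nu_4$ forces $d(\mathcal{F}^{\hat{c}}(x), \mathcal{F}^{\hat{c}}(y)) < C'\nu_0''/2$, say. Then for every $w \in \mathcal{F}^{\hat{c}}(y)$, there exists $a \in \mathcal{F}^{\hat{c}}(x)$ with $d(a,w)$ small enough that both local projections $\pi^u(a,w)$ and $\pi^{wu}(a,w)$ are well-defined by Proposition \ref{prop:unstproj} and Proposition \ref{prop:weakunstproj}, and Lemma \ref{lem:piwu=phi^t(piu)} applies to the pair $(a,w)$ yielding a unique $t \in \left]-\nu_0'', \nu_0''\right[$ with $\pi^{wu}(a,w) = \varphi^{t}(\pi^u(a,w))$.

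The crucial identification step is to show $\pi^u(a,w) = \Pi^u_{x,y}(w)$ and $\pi^{wu}(a,w) = \Pi^{wu}_{x,y}(w)$. For the first, note that $\pi^u(a,w)$ lies in $\mathcal{F}^{u}_{\mu}(w)\cap\bigcup_{z \in \mathcal{F}^{\hat{c}}_{\mu}(a)}\mathcal{F}^{ws}_{\mu}(z)$ for some $\mu$ chosen below $\mu_1$; since $\mathcal{F}^{\hat{c}}_{\mu}(a) \subset \mathcal{F}^{\hat{c}}(x)$, this point also lies in $\mathcal{F}^{u}_{\mu_1}(w) \cap \bigcup_{z \in \mathcal{F}^{\hat{c}}(x)}\mathcal{F}^{ws}_{\mu_1}(z) = \{\Pi^u_{x,y}(w)\}$ by the uniqueness statement in Lemma \ref{lem:unstprojtrivholo} (and Lemma \ref{lem:Piu}). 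The analogous identification for the weak unstable projection uses Lemma \ref{lem:weakunstprojtrivholo} and Lemma \ref{lem:Piwu}. In particular the value $t$ does not depend on the auxiliary choice of $a$, so I may set $t(x,y,w):=t$.

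For uniqueness of $t(x,y,w)$ inside $\left]-\nu_4,\nu_4\right[$ (rather than just $\left]-\nu_0'',\nu_0''\right[$), I would reuse the periodic-orbit argument from the proof of Lemma \ref{lem:piwu=phi^t(piu)}: by further shrinking $\nu_4$ below $s/(20 C_2^2)$, where $s$ is the infimum of periods of periodic orbits, any two admissible parameters $t_1,t_2$ would yield $\varphi^{t_1-t_2}(\Pi^u_{x,y}(w)) = \Pi^u_{x,y}(w)$ with $|t_1-t_2|<2\nu_4$, forcing $t_1=t_2$. I expect the main obstacle to be bookkeeping rather than conceptual: one must simultaneously shrink $\nu_4$ so that (i) a nearby $a \in \mathcal{F}^{\hat{c}}(x)$ exists, (ii) the local projections at $(a,w)$ fit inside the neighborhoods used to define $\Pi^u_{x,y}$ and $\Pi^{wu}_{x,y}$, and (iii) the periodic-orbit obstruction rules out distinct $t$'s; but no new ingredient beyond Lemma \ref{lem:piwu=phi^t(piu)} combined with trivial-holonomy uniqueness is needed.
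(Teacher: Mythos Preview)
Your proposal is correct and is precisely the unpacking of the paper's one-line proof ``This is immediate by Lemma \ref{lem:piwu=phi^t(piu)}'': you localize at each $w\in\mathcal{F}^{\hat{c}}(y)$ by picking a nearby $a\in\mathcal{F}^{\hat{c}}(x)$, apply the local lemma to $(a,w)$, and use the trivial-holonomy uniqueness (Lemmas \ref{lem:unstprojtrivholo}, \ref{lem:weakunstprojtrivholo}, \ref{lem:Piu}, \ref{lem:Piwu}) to identify $\pi^{u}(a,w)=\Pi^{u}_{x,y}(w)$ and $\pi^{wu}(a,w)=\Pi^{wu}_{x,y}(w)$. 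The paper simply suppresses these routine identifications; your bookkeeping and the reuse of the periodic-orbit argument for uniqueness are exactly what is intended.
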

\begin{proof}
    This is immediate by Lemma \ref{lem:piwu=phi^t(piu)}.

\end{proof}
\begin{lemma}
\label{lem:tinv}
    Let $(\varphi^t)$ a smooth partially hyperbolic flow with a flow invariant subcenter foliation $\mathcal{F}^{\hat{c}}$ with $C^1$-leaves and trivial holonomy.\\
    Then there exist $\nu_4'>0$ such that 
    for every $x,y\in M$ with $d(x,y)<\nu_4'$ and $d(\varphi^1(x),\varphi^1(y))<\nu_4'$, for every $w \in \mathcal{F}^{\hat{c}}(y)$, $t(x,y,w)$ and $t(\varphi^1(x),\varphi^1(y),\varphi^1(w))$ are well defined and satisfy 
    $$t(\varphi^1(x),\varphi^1(y),\varphi^1(w))=t(x,y,w).$$
\end{lemma}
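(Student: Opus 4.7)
The plan is to deduce this lemma by chaining together the three invariance/identification lemmas stated just above, exactly as was done for Lemma \ref{lem:t_inv}. I would first choose $\nu_4'>0$ small enough so that whenever $d(x,y)<\nu_4'$ and $d(\varphi^1(x),\varphi^1(y))<\nu_4'$, the quantities $\Pi^u_{x,y}(w)$, $\Pi^{wu}_{x,y}(w)$, $\Pi^u_{\varphi^1(x),\varphi^1(y)}(\varphi^1(w))$ and $\Pi^{wu}_{\varphi^1(x),\varphi^1(y)}(\varphi^1(w))$ all exist for every $w\in \mathcal{F}^{\hat{c}}(y)$; this amounts to taking $\nu_4'<\min(\nu_3,\nu_4)$ together with whatever constraint is needed so that the distance control is preserved under $\varphi^1$ (a factor $\Lambda:=\sup\|d\varphi^1\|$, as in the proof of Lemma \ref{lem:unstprojinv}).

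Next, apply Lemma \ref{lem:Piwu=phi^t(Piu)} both at $(x,y,w)$ and at $(\varphi^1(x),\varphi^1(y),\varphi^1(w))$ to get
\[
\Pi^{wu}_{x,y}(w)=\varphi^{t(x,y,w)}\bigl(\Pi^{u}_{x,y}(w)\bigr),\qquad
\Pi^{wu}_{\varphi^1(x),\varphi^1(y)}(\varphi^1(w))=\varphi^{t(\varphi^1(x),\varphi^1(y),\varphi^1(w))}\bigl(\Pi^{u}_{\varphi^1(x),\varphi^1(y)}(\varphi^1(w))\bigr).
\]
Applying $\varphi^1$ to the first identity and using Lemma \ref{lem:Piuinv} and Lemma \ref{lem:Piwuinv} to replace the two right-most terms on the right, the second identity becomes
\[
\varphi^1\bigl(\Pi^{wu}_{x,y}(w)\bigr)=\varphi^{t(\varphi^1(x),\varphi^1(y),\varphi^1(w))}\bigl(\varphi^1(\Pi^{u}_{x,y}(w))\bigr)=\varphi^{t(\varphi^1(x),\varphi^1(y),\varphi^1(w))+1}\bigl(\Pi^{u}_{x,y}(w)\bigr),
\]
while the first gives $\varphi^1(\Pi^{wu}_{x,y}(w))=\varphi^{t(x,y,w)+1}(\Pi^u_{x,y}(w))$. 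Subtracting, the point $\Pi^u_{x,y}(w)$ is fixed by $\varphi^{t(x,y,w)-t(\varphi^1(x),\varphi^1(y),\varphi^1(w))}$.

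To conclude, I would invoke the minimum-period claim used inside the proof of Lemma \ref{lem:piwu=phi^t(piu)}: there exists $s>0$ such that no point of $M$ is periodic with period in $(0,s)$. Since $|t(x,y,w)|$ and $|t(\varphi^1(x),\varphi^1(y),\varphi^1(w))|$ are both bounded by a constant multiple of $\nu_4'$ (by the remark after Lemma \ref{lem:piwu=phi^t(piu)}), if $\nu_4'$ is chosen so that this constant multiple is less than $s/2$, the difference $t(x,y,w)-t(\varphi^1(x),\varphi^1(y),\varphi^1(w))$ lies in $(-s,s)$ and must therefore vanish. The only real step is this last uniqueness argument, but it is essentially identical to the one already established, so no genuine obstacle arises: the lemma is formally an immediate consequence of Lemmas \ref{lem:Piuinv}, \ref{lem:Piwuinv} and \ref{lem:Piwu=phi^t(Piu)}, mirroring Lemma \ref{lem:t_inv}.
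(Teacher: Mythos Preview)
Your proposal is correct and follows essentially the same approach as the paper, which simply says the result is immediate from Lemmas \ref{lem:Piuinv}, \ref{lem:Piwuinv} and \ref{lem:Piwu=phi^t(Piu)}. You have merely made explicit the uniqueness step (via the minimum-period claim) that the paper leaves implicit in the uniqueness clause of Lemma \ref{lem:Piwu=phi^t(Piu)}.
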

\begin{proof}
    This is immediate by the previous Lemma, Lemma \ref{lem:Piuinv} and Lemma \ref{lem:Piwuinv}.
\end{proof}

\subsection{Completeness of $E^{\hat{c}}\oplus E^s$}
Let $x \in M$, $y \in \mathcal{F}^{s}_{ \nu_4'}(x)$ and $w \in \mathcal{F}^{\hat{c}}(y)$.
We are going to prove that $\Pi^{wu}_{x,y}(w)=w$ which will imply that $w\in \bigcup_{z \in \mathcal{F}^{\hat{c}}(x)}\mathcal{F}^{s}_{ \nu_1}(z)$.
By Corollary \ref{cor:compcent}, we already know that $\Pi^{u}_{x,y}(w)=w$ and by Lemma \ref{lem:Piwu=phi^t(Piu)} it comes $\Pi^{wu}_{x,y}(w)=\varphi^{t(x,y,w)}(w)$.
By the choice of $\nu$, it is equivalent to prove that $t(x,y,w)=0$.
\begin{lemma}
\label{lem:t=0}
    Let $(\varphi^t)$ a smooth partially hyperbolic flow with a flow invariant subcenter foliation $\mathcal{F}^{\hat{c}}$ with $C^1$-leaves and trivial holonomy. \\
    Then there exist $\nu_4''>0$ such that 
    for every $x\in M$, $y \in \mathcal{F}^{s}_{ \nu_4''}(x)$ and $w \in \mathcal{F}^{\hat{c}}(y)$, $t(x,y,w)$ is well-defined and equals $0$.
\end{lemma}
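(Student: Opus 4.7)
My plan is to exploit the $\varphi^1$-invariance of $t(\cdot,\cdot,\cdot)$ from Lemma \ref{lem:tinv}, combined with stable contraction and a limit argument that forces the relation $w_\infty=\varphi^{t^\ast}(w_\infty)$ with $|t^\ast|$ smaller than the minimal period of the flow, so that $t^\ast=0$.

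First I would shrink $\nu_4''$ so that $y\in\mathcal{F}^s_{\nu_4''}(x)$ forces $d(\varphi^n(x),\varphi^n(y))<\nu_4'$ for every integer $n\geq 0$; this is possible by uniform exponential contraction of $(\varphi^n)$ along $\mathcal{F}^s$. I also assume $\nu_4<s$, where $s>0$ is the minimal period of periodic orbits of $(\varphi^t)$ produced by the claim inside the proof of Lemma \ref{lem:piwu=phi^t(piu)}. Iterating Lemma \ref{lem:tinv} then yields that the number
\[
t_n:=t\bigl(\varphi^n(x),\varphi^n(y),\varphi^n(w)\bigr)
\]
is well defined for every $n\geq 0$ and equals a common value $t^\ast=t(x,y,w)$, with $|t^\ast|<\nu_4<s$.

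Next, by compactness of $M$, I extract a subsequence $n_k\to\infty$ along which $\varphi^{n_k}(x)\to x_\infty$, $\varphi^{n_k}(y)\to y_\infty$ and $\varphi^{n_k}(w)\to w_\infty$. Stable contraction gives $x_\infty=y_\infty$. Because $\mathcal{F}^{\hat{c}}$ is flow-invariant, $\varphi^{n_k}(w)\in \mathcal{F}^{\hat{c}}(\varphi^{n_k}(y))$, and since the leaf space $M/\mathcal{F}^{\hat{c}}$ is Hausdorff (so leaves are closed in $M$) one obtains $w_\infty\in \mathcal{F}^{\hat{c}}(y_\infty)=\mathcal{F}^{\hat{c}}(x_\infty)$. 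By joint continuity of $\Pi^u$ and $\Pi^{wu}$ in their three arguments, together with the observation that at the diagonal point where $x_\infty=y_\infty$ and $w_\infty\in \mathcal{F}^{\hat{c}}(x_\infty)$ the uniqueness statements of Proposition \ref{prop:unstproj} and Proposition \ref{prop:weakunstproj} force $\Pi^u_{x_\infty,y_\infty}(w_\infty)=w_\infty$ and $\Pi^{wu}_{x_\infty,y_\infty}(w_\infty)=w_\infty$, passing to the limit in
\[
\Pi^{wu}_{\varphi^{n_k}(x),\varphi^{n_k}(y)}\bigl(\varphi^{n_k}(w)\bigr)=\varphi^{t^\ast}\!\left(\Pi^u_{\varphi^{n_k}(x),\varphi^{n_k}(y)}\bigl(\varphi^{n_k}(w)\bigr)\right)
\]
gives $w_\infty=\varphi^{t^\ast}(w_\infty)$. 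Since $|t^\ast|<s$ and no orbit of $(\varphi^t)$ has period in $(0,s)$, this forces $t^\ast=0$, i.e.\ $t(x,y,w)=0$.

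I expect the main delicate point to be verifying the joint continuity of $\Pi^u$ and $\Pi^{wu}$ in the three parameters $(x,y,w)$ and the identification of their values on the diagonal $\{x=y,\;w\in\mathcal{F}^{\hat{c}}(x)\}$. Both rest on the uniqueness statements of Proposition \ref{prop:unstproj} and Proposition \ref{prop:weakunstproj}, on the continuity of the foliations $\mathcal{F}^s,\mathcal{F}^u,\mathcal{F}^{ws},\mathcal{F}^{wu},\mathcal{F}^{\hat{c}}$, and on compactness of the subcenter leaves, which together guarantee that the local product structures defining the projections vary continuously with their data.
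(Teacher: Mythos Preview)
Your proof is correct and follows essentially the same strategy as the paper's: iterate Lemma \ref{lem:tinv} so that $t$ is constant along forward orbits, extract a limit by compactness, and produce a point $w_*$ with $\varphi^{t}(w_*)=w_*$ and $|t|$ below the minimal period, forcing $t=0$. The only cosmetic difference is that the paper carries out the limit in the leaf space $M/\mathcal{F}^{\hat c}$ via Hausdorff distance (obtaining $\mathcal{F}^{\hat c}(\varphi^t(w_*))=\mathcal{F}^{\hat c}(w_*)$ and then invoking Lemma \ref{lem:transvcompafol}), whereas you take the limit directly in $M$ using the joint continuity of $\Pi^u$ and $\Pi^{wu}$.
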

\begin{proof}
Let $x \in M$, $y \in \mathcal{F}^s_\nu(x)$ and $w \in \mathcal{F}^{\hat{c}}(y)$, where $\nu< \nu_4'$, so that $t(x,y,w)$ is well-defined by \ref{lem:Piwu=phi^t(Piu)}.
For every $n \in \mathbb N$, it comes that $\varphi^n(y) \in \mathcal{F}^{s}_{\nu}(\varphi^n(x))$ and $\varphi^n(w)\in \mathcal{F}^{\hat{c}}(\varphi^n(y))$. 
By the previous Lemma, for every $n \in \mathbb N$:
$$t(\varphi^n(x),\varphi^n(y),\varphi^n(w))=t(x,y,w)=:t.$$
Let $z\in \mathcal{F}^{\hat{c}}(x)$ so that $\Pi^{wu}_{x,y}(w)=\varphi^t(w)\in \mathcal{F}^s_{\nu_1}(z)$.
As $d(\varphi^n(x),\varphi^n(y))\xrightarrow[n \to +\infty]{}0$ and $d(\varphi^n(\varphi^t(w)),\varphi^n(z))\xrightarrow[n \to +\infty]{}0$, the Hausdorff distance between $\mathcal{F}^{\hat{c}}(\varphi^n(w))=\mathcal{F}^{\hat{c}}(\varphi^n(y))$ and $\mathcal{F}^{\hat{c}}(\varphi^n(x))$, as well as the Hausdorff distance between $\mathcal{F}^{\hat{c}}(\varphi^n(\varphi^t(w)))=\varphi^t(\mathcal{F}^{\hat{c}}(\varphi^n(w)))$ and $\mathcal{F}^{\hat{c}}(\varphi^n(z))=\mathcal{F}^{\hat{c}}(\varphi^n(x))$, converge to $0$ as $n \to + \infty$. 
Therefore, the Hausdorff distance between $\mathcal{F}^{\hat{c}}(\varphi^n(w))$ and $\varphi^t(\mathcal{F}^{\hat{c}}(\varphi^n(w)))$ converges to $0$ as $n \to + \infty$.
As the leaf space $M/\mathcal{F}^{\hat{c}}$ is a compact metric space with the Hausdorff metric, after extraction, there exists $w_*\in M$ such that $\mathcal{F}^{\hat{c}}(\varphi^t(w^*))=\mathcal{F}^{\hat{c}}(w^*)$.
If $\nu$ is small enough independently of $x,y,w$, then $\varphi^t(w^*)\in \mathcal{F}^{\hat{c}}(w^*) \cap \Phi_{\mu_*}(w^*)=\{w^*\}$ (by Lemma \ref{lem:transvcompafol}) i.e. $\varphi^t(w^*)=w^*$.
By the construction of $t$, necessarily $t=0$.
\end{proof}
\begin{proposition}
\label{prop:Fçsloc=FçxFsloc}
    For every $x\in M$, the map
    $$\left \{\begin{array}{ccl}
        \mathcal{F}^s_{\nu_4''}(x) \times \mathcal{F}^{\hat{c}}(x)&\to & M\\
         (y,z) &\mapsto & \mathcal{F}^{\hat{c}}(y)\cap \mathcal{F}^s_{\nu_1}(z)
    \end{array} \right.$$
    is a well-defined homeomorphism onto its image.
\end{proposition}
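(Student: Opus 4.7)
The plan is to prove this proposition as the subcenter-stable analogue of Corollary \ref{cor:Fcs=FçxFsloc}, leveraging the fact that Lemma \ref{lem:t=0} has already isolated the genuinely nontrivial content: for $y \in \mathcal{F}^s_{\nu_4''}(x)$, every point of $\mathcal{F}^{\hat{c}}(y)$ lies in $\bigcup_{z' \in \mathcal{F}^{\hat{c}}(x)} \mathcal{F}^s_{\nu_1}(z')$ (via $\Pi^{wu}_{x,y}(w)=w$). The present proposition is essentially a packaging of this containment into a local product chart, after using trivial holonomy/transversality to pin down uniqueness.

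First I will establish well-definedness. Fix $(y,z) \in \mathcal{F}^s_{\nu_4''}(x) \times \mathcal{F}^{\hat{c}}(x)$. Because stable leaves are symmetric, $x \in \mathcal{F}^s_{\nu_4''}(y)$, so applying Lemma \ref{lem:t=0} with the roles of $x,y$ swapped yields
\[ \mathcal{F}^{\hat{c}}(x) \subset \bigcup_{w' \in \mathcal{F}^{\hat{c}}(y)} \mathcal{F}^s_{\nu_1}(w'). \]
In particular, $z \in \mathcal{F}^s_{\nu_1}(w)$ for some $w \in \mathcal{F}^{\hat{c}}(y)$, producing the required intersection point. Uniqueness of $w$ in $\mathcal{F}^{\hat{c}}(y) \cap \mathcal{F}^s_{\nu_1}(z)$ comes from the second assertion of Lemma \ref{lem:weakunstprojtrivholo}: two candidates would lie in the same subcenter leaf within $\mathcal{F}^s_{2\nu_1}$ of each other, hence coincide (taking $\nu_4'' \leq \nu_1$ from the start).

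Next I will show the map is a homeomorphism onto its image. Continuity is immediate from continuity of the transverse foliations $\mathcal{F}^{\hat{c}}$ and $\mathcal{F}^s$. For injectivity, suppose $(y_1,z_1)$ and $(y_2,z_2)$ both map to $w$; then $y_1,y_2 \in \mathcal{F}^s_{\nu_4''}(x) \cap \mathcal{F}^{\hat{c}}(w)$, and the transversality estimate $\mathcal{F}^s_{2\nu_1}(y_1)\cap \mathcal{F}^{\hat{c}}(y_1)=\{y_1\}$ (Lemma \ref{lem:weakunstprojtrivholo}) forces $y_1=y_2$; the same argument applied to $z_1,z_2 \in \mathcal{F}^{\hat{c}}(x) \cap \mathcal{F}^s_{\nu_1}(w)$ yields $z_1=z_2$. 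The inverse is continuous because $y$ and $z$ are recovered from $w$ as the unique intersections $\mathcal{F}^{\hat{c}}(w) \cap \mathcal{F}^s_{\nu_4''}(x)$ and $\mathcal{F}^s_{\nu_1}(w) \cap \mathcal{F}^{\hat{c}}(x)$, which again depend continuously on $w$ by transversality.

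I do not expect any genuine obstacle here beyond careful bookkeeping of the constants $\nu_1, \nu_4''$ so that the uniqueness clauses of Lemma \ref{lem:weakunstprojtrivholo} are legitimately applicable to all the local intersections invoked above; the substantive dynamical content (that the time defect $t(x,y,w)$ vanishes, so the subcenter leaf of $y$ truly sits inside the stable saturation of $\mathcal{F}^{\hat{c}}(x)$) is already sealed by Lemma \ref{lem:t=0}, and the present proposition is the structural consequence for the local geometry of $\mathcal{F}^{\hat{c}s}(x)$.
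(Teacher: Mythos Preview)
Your proposal is correct and follows essentially the same approach as the paper: swap the roles of $x$ and $y$ to apply Lemma \ref{lem:t=0}, obtain existence of the intersection point, and invoke the transversality clause of Lemma \ref{lem:weakunstprojtrivholo} for uniqueness. The paper's own proof is terser (it simply asserts that the map ``can be checked'' to be a homeomorphism), whereas you supply the injectivity and inverse-continuity details explicitly; this is additional rigor rather than a different strategy.
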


\begin{proof}
    Let $y \in \mathcal{F}^s_{\nu_4''}(x)$ and $z \in \mathcal{F}^{\hat{c}}(x)$. Then $x \in \mathcal{F}^s_{\nu_4''}(y)$ so by the previous result, there exists a point $w \in \mathcal{F}^{\hat{c}}(y)$ such that $z\in \mathcal{F}^s_{\nu_1}(w)$ which is unique. Therefore, the above map is well-defined and it can be checked that it is a homeomorphism onto its image.
    
\end{proof}

\begin{corollary}
\label{cor:Fçs->Fçsubm}
    For every $x \in M$,
    $$\bigcup_{z \in \mathcal{F}^{s}(x)}\mathcal{F}^{\hat{c}}(z) \subset \bigcup_{z \in \mathcal{F}^{\hat{c}}(x)}\mathcal{F}^{s}(z).$$
\end{corollary}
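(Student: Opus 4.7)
The plan is to globalize the local product structure from Proposition \ref{prop:Fçsloc=FçxFsloc} by iterating $\varphi^1$ forward, exploiting the contraction along $\mathcal{F}^s$ together with the flow-invariance of both $\mathcal{F}^{\hat{c}}$ and $\mathcal{F}^s$. Given $z \in \mathcal{F}^s(x)$ and $w \in \mathcal{F}^{\hat{c}}(z)$, the goal is to exhibit a point $z'' \in \mathcal{F}^{\hat{c}}(x)$ such that $w \in \mathcal{F}^s(z'')$.

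First, I would note that since $z \in \mathcal{F}^s(x)$, the leafwise stable distance satisfies $d^s(\varphi^n(z), \varphi^n(x)) \to 0$ as $n \to +\infty$; hence for all sufficiently large $n$ one has $\varphi^n(z) \in \mathcal{F}^s_{\nu_4''}(\varphi^n(x))$.

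Then I would apply Proposition \ref{prop:Fçsloc=FçxFsloc} at the basepoint $\varphi^n(x)$, with the stable point $\varphi^n(z)$ and the subcenter point $\varphi^n(w) \in \mathcal{F}^{\hat{c}}(\varphi^n(z))$: the proposition produces a unique $z'_n \in \mathcal{F}^{\hat{c}}(\varphi^n(x))$ with $\varphi^n(w) \in \mathcal{F}^s_{\nu_1}(z'_n)$. Because $\mathcal{F}^{\hat{c}}$ is flow-invariant, $\mathcal{F}^{\hat{c}}(\varphi^n(x)) = \varphi^n(\mathcal{F}^{\hat{c}}(x))$, so I can write $z'_n = \varphi^n(z'')$ for a unique $z'' \in \mathcal{F}^{\hat{c}}(x)$. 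Finally, the $\varphi^1$-invariance of $\mathcal{F}^s$ forces $w$ and $z''$ to lie on the same strong stable leaf, so $w \in \mathcal{F}^s(z'')$, which places $w$ inside $\bigcup_{z' \in \mathcal{F}^{\hat{c}}(x)} \mathcal{F}^s(z')$ and proves the inclusion.

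There is no real obstacle here: the genuine content is already packaged in Lemma \ref{lem:t=0} and Proposition \ref{prop:Fçsloc=FçxFsloc}, and the corollary is just their combination with stable contraction. The only minor care is to work with the intrinsic leaf distance $d^s$ rather than the ambient distance when invoking the $\nu_4''$-bound, so that the contraction along $\mathcal{F}^s$ is genuinely uniform; this is routine from the definition of $E^s$ together with the equivalence of the two metrics on small stable plaques.
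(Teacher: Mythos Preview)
Your proof is correct and follows essentially the same route as the paper: iterate forward until $\varphi^n(z)\in\mathcal{F}^s_{\nu_4''}(\varphi^n(x))$, invoke the local result (the paper phrases it via $t(\varphi^n(x),\varphi^n(z),\varphi^n(w))=0$, you via Proposition~\ref{prop:Fçsloc=FçxFsloc}), and then pull back using flow-invariance of $\mathcal{F}^{\hat{c}}$ and $\mathcal{F}^s$. The only cosmetic difference is that the paper works directly with the saturated set $\bigcup_{z'\in\mathcal{F}^{\hat{c}}(\varphi^n(x))}\mathcal{F}^s(z')=\varphi^n\bigl(\bigcup_{z'\in\mathcal{F}^{\hat{c}}(x)}\mathcal{F}^s(z')\bigr)$ rather than tracking an individual preimage point $z''$.
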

\begin{proof}
    Let $z \in \mathcal{F}^s(x)$ and $w \in \mathcal{F}^{\hat{c}}(z)$. 
    For $n\in \mathbb N$ large enough, $\varphi^n(z) \in \mathcal{F}^s_{\nu_4''}(\varphi^n(x))$ so by the previous Proposition $t(\varphi^n(x),\varphi^n(z),\varphi^n(w))=0$, i.e. 
    $\Pi^{wu}_{\varphi^n(x),\varphi^n(z)}(\varphi^n(w))=\varphi^n(w)$ which implies that 
    $$\varphi^n(w) \in \bigcup_{z' \in \mathcal{F}^{\hat{c}}(\varphi^n(x))}\mathcal{F}^{s}(z')=\varphi^n\left(\bigcup_{z' \in \mathcal{F}^{\hat{c}}(x)}\mathcal{F}^{s}(z')\right) $$
    and therefore $w \in \bigcup_{z' \in \mathcal{F}^{\hat{c}}(x)}\mathcal{F}^{s}(z')$ which concludes.
\end{proof}
\begin{corollary}
\label{cor:compsubc}
    For every $x \in M$,
    $$\bigcup_{z \in \mathcal{F}^{s}(x)}\mathcal{F}^{\hat{c}}(z) = \bigcup_{z \in \mathcal{F}^{\hat{c}}(x)}\mathcal{F}^{s}(z).$$
\end{corollary}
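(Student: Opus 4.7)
The previous corollary already gives the inclusion $\bigcup_{z \in \mathcal{F}^{s}(x)}\mathcal{F}^{\hat{c}}(z) \subset \bigcup_{z \in \mathcal{F}^{\hat{c}}(x)}\mathcal{F}^{s}(z)$, so the plan is simply to establish the reverse inclusion by a symmetry argument completely analogous to the second half of the proof of Corollary \ref{cor:compcent}.

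More precisely, fix $x \in M$, take $z \in \mathcal{F}^{\hat{c}}(x)$ and $y \in \mathcal{F}^s(z)$; I want to exhibit some $w \in \mathcal{F}^s(x)$ with $y \in \mathcal{F}^{\hat{c}}(w)$. The key observation is that, because the relations \emph{lying in the same stable leaf} and \emph{lying in the same subcenter leaf} are symmetric in their two arguments, from $y \in \mathcal{F}^s(z)$ and $z \in \mathcal{F}^{\hat{c}}(x)$ one immediately deduces $z \in \mathcal{F}^s(y)$ and $x \in \mathcal{F}^{\hat{c}}(z)$, so that
\[
x \in \mathcal{F}^{\hat{c}}(z) \subset \bigcup_{z' \in \mathcal{F}^s(y)} \mathcal{F}^{\hat{c}}(z').
\]
Now I apply Corollary \ref{cor:Fçs->Fçsubm} at the base point $y$ rather than $x$, which yields
\[
\bigcup_{z' \in \mathcal{F}^s(y)} \mathcal{F}^{\hat{c}}(z') \subset \bigcup_{z' \in \mathcal{F}^{\hat{c}}(y)} \mathcal{F}^s(z').
\]
Therefore there exists $w \in \mathcal{F}^{\hat{c}}(y)$ with $x \in \mathcal{F}^s(w)$, i.e. $w \in \mathcal{F}^s(x)$, and by symmetry of belonging to the same subcenter leaf we also have $y \in \mathcal{F}^{\hat{c}}(w)$. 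This places $y$ in $\bigcup_{w \in \mathcal{F}^s(x)} \mathcal{F}^{\hat{c}}(w)$ and finishes the inclusion.

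There is no real obstacle here: all the substantive dynamical work (the contraction argument and the use of trivial holonomy via the leaf-space compactness) has already been absorbed into Corollary \ref{cor:Fçs->Fçsubm}. The only thing that could go wrong would be if applying that corollary at $y$ instead of $x$ required an additional hypothesis, but the statement is uniform in the base point, so the argument goes through without modification.
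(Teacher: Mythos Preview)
Your proof is correct and follows essentially the same route as the paper: one inclusion is given by Corollary~\ref{cor:Fçs->Fçsubm}, and the reverse inclusion is obtained by the symmetry trick of applying that same corollary at the point $y$ instead of $x$, exactly as in the converse part of Corollary~\ref{cor:compcent}. The paper's argument is word-for-word the same as yours.
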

\begin{proof}
    It suffices to prove $\bigcup_{z \in \mathcal{F}^{\hat{c}}(x)}\mathcal{F}^{s}(z) \subset \bigcup_{z \in \mathcal{F}^{s}(x)}\mathcal{F}^{\hat{c}}(z)$ by the previous Proposition.
    Let $z \in \mathcal{F}^{\hat{c}}(x)$ and $y \in \mathcal{F}^s(z)$. Then $z \in \mathcal{F}^s(y)$ and $x \in \mathcal{F}^{\hat{c}}(z)$ so by the previous point $x \in \bigcup_{w \in \mathcal{F}^{{\hat{c}}}(y)}\mathcal{F}^{s}(w)$ i.e. there exists $w\in \mathcal{F}^{\hat{c}}(y)$ such that $x \in \mathcal{F}^s(w)$. 
    Therefore, $y\in \mathcal{F}^{\hat{c}}(w)$ with $w \in \mathcal{F}^s(x)$ which concludes.
\end{proof}

\begin{proposition}
\label{prop:locprodstruquotient}
    For every $x \in M$, the projection
    $$y\ \mapsto \mathcal{F}^{s}(y)\cap \mathcal{F}^{\hat{c}}(x)$$
    from $\mathcal{F}^{\hat{c}s}(x)$ to $\mathcal{F}^{\hat{c}}(x)$
    is a well-defined map, which is moreover a $C^1$ submersion if $(\varphi^t)$ is subcenter-bunched.
\end{proposition}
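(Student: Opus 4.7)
The plan is to prove well-definedness first (which is the essential content when the flow is not subcenter-bunched), and then upgrade the regularity to a $C^1$ submersion using Proposition \ref{prop:subunchC1}.

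For well-definedness: given $y \in \mathcal{F}^{\hat{c}s}(x)$, existence of a point in $\mathcal{F}^{s}(y)\cap \mathcal{F}^{\hat{c}}(x)$ is immediate from the description $\mathcal{F}^{\hat{c}s}(x)=\bigcup_{z \in \mathcal{F}^{\hat{c}}(x)}\mathcal{F}^{s}(z)$ established in Proposition \ref{prop:dyncohsubcen}. For uniqueness, I would argue as follows: if $z_1, z_2$ both belong to $\mathcal{F}^{s}(y)\cap \mathcal{F}^{\hat{c}}(x)$, then the intrinsic stable distance $d^{s}(\varphi^n(z_1), \varphi^n(z_2))$ tends to $0$ as $n\to +\infty$ by the uniform contraction in $E^{s}$, while $\varphi^n(z_1)$ and $\varphi^n(z_2)$ remain in the common subcenter leaf $\mathcal{F}^{\hat{c}}(\varphi^n(x))$ by flow-invariance of $\mathcal{F}^{\hat{c}}$. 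For $n$ large enough, $\varphi^n(z_2)\in \mathcal{F}^{s}_{2\nu_1}(\varphi^n(z_1))\cap \mathcal{F}^{\hat{c}}(\varphi^n(z_1))$, and the second part of Lemma \ref{lem:weakunstprojtrivholo} forces this intersection to be $\{\varphi^n(z_1)\}$, hence $z_1 = z_2$.

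For the $C^1$ submersion statement, suppose $(\varphi^t)$ is subcenter-bunched. By Proposition \ref{prop:subunchC1}, the restriction $\mathcal{F}^{s}|_{\mathcal{F}^{\hat{c}s}(x)}$ is a $C^1$ foliation of the $C^1$ immersed submanifold $\mathcal{F}^{\hat{c}s}(x)$ (the latter being $C^1$ by Proposition \ref{prop:dyncohsubcen}). Since $\mathcal{F}^{\hat{c}}(x)\subset\mathcal{F}^{\hat{c}s}(x)$ (by Proposition \ref{prop:dyncohsubf}) is tangent to $E^{\hat{c}}$, which is transverse to $E^s$ within $E^{\hat{c}s}$, it is a $C^1$ transversal to this stable $C^1$ foliation of $\mathcal{F}^{\hat{c}s}(x)$. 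The projection in the proposition then coincides with the holonomy of this $C^1$ foliation onto the transversal $\mathcal{F}^{\hat{c}}(x)$, and by the well-definedness proved above, this holonomy extends globally from $\mathcal{F}^{\hat{c}s}(x)$ to $\mathcal{F}^{\hat{c}}(x)$. In a $C^1$ foliated chart this holonomy is $C^1$, and its derivative has kernel exactly $E^{s}$ and image equal to the tangent space of $\mathcal{F}^{\hat{c}}(x)$ at the target point; hence the projection is a $C^1$ submersion of rank $\dim \mathcal{F}^{\hat{c}}$.

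The main obstacle is the uniqueness step in the well-definedness claim: a stable leaf could \emph{a priori} meet a subcenter leaf in several points, and it is the interplay of the uniform contraction of $\mathcal{F}^{s}$ with Lemma \ref{lem:weakunstprojtrivholo} (whose proof genuinely uses the trivial holonomy hypothesis on $\mathcal{F}^{\hat{c}}$) that rules this out. Once well-definedness is granted, the regularity upgrade under subcenter-bunching follows from the standard fact that holonomy of a $C^1$ foliation between $C^1$ transversals is $C^1$, applied to the foliated structure provided by Proposition \ref{prop:subunchC1}.
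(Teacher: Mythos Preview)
Your proof is correct and follows essentially the same route as the paper: existence from the description $\mathcal{F}^{\hat{c}s}(x)=\bigcup_{z\in\mathcal{F}^{\hat{c}}(x)}\mathcal{F}^{s}(z)$, uniqueness by iterating forward until the two candidate points lie in a small local stable disk intersected with their common subcenter leaf (the paper cites Lemma~\ref{lem:transvcompafol} for this singleton intersection rather than Lemma~\ref{lem:weakunstprojtrivholo}, but the content is the same), and the $C^1$ upgrade via Proposition~\ref{prop:subunchC1} interpreting the projection as stable holonomy inside $\mathcal{F}^{\hat{c}s}(x)$. Your write-up is in fact more detailed than the paper's on the submersion step.
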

\begin{proof}
    Let $y \in \mathcal{F}^{\hat{c}s}(x)=\bigcup_{z \in \mathcal{F}^{\hat{c}}(x)}\mathcal{F}^{s}(z)$. 
    Let $z_1,z_2 \in \mathcal{F}^{\hat{c}}(x)$ such that for $i \in \{1,2\}$, $y \in \mathcal{F}^s(z_i)$.
    Therefore, $d(\varphi^n(z_1),\varphi^n(z_2)) \xrightarrow[n\to +\infty]{}0$ so for $n \in \mathbb N$ large enough, $\varphi^n(z_1)\in \mathcal{F}^s_{\nu_1}(\varphi^n(z_2))\cap \mathcal{F}^{\hat{c}}(\varphi^n(z_2))=\{\varphi^n(z_2)\}$ by Lemma \ref{lem:transvcompafol}. This proves $z_1=z_2$ and the projection map onto $\mathcal{F}^{\hat{c}}(x)$ is well-defined.
    The fact that it is a $C^1$ submersion if $(\varphi^t)$ is subcenter-bunched comes from Theorem \ref{prop:subunchC1} since the local stable holonomy inside a subcenter-stable leaf would be $C^1$.
\end{proof}

The proof of Theorem \ref{thm:subcentcomp} shows that $M$ has local product structure in the sense of \cite{wang_quasi-shadowing_2023}.
Therefore, we have the following result:

\begin{corollary}
    Let $(\varphi^t)$ a partially hyperbolic flow on a smooth compact manifold $M$ whose subcenter distribution is integrable to a flow invariant compact foliation $\mathcal{F}^{\hat{c}}$ with trivial holonomy.\\
    Then $(\varphi^t)$ has the quasi-shadowing property on $M$.
\end{corollary}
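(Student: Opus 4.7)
The plan is to deduce the quasi-shadowing property from the main theorem of \cite{wang_quasi-shadowing_2023} by verifying that the partially hyperbolic flow $(\varphi^t)$ satisfies their notion of local product structure. Essentially all the work has already been done in Sections \ref{sec:4}--\ref{sec:6}; the task here is to collect the pieces into the precise form required for their shadowing theorem and then invoke it.

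First I would record the consequences of Theorem \ref{thm:subcentcomp}: dynamical coherence yields the flow-invariant foliations $\mathcal{F}^{\hat{c}s}$, $\mathcal{F}^{\hat{c}u}$, $\mathcal{F}^{\hat{c}}$ with $C^1$ leaves, together with the center foliation $\mathcal{F}^c = \mathcal{F}^{w\hat{c}}$ and the weak (un)stable foliations $\mathcal{F}^{ws}, \mathcal{F}^{wu}$ from Lemma \ref{lem:weakfol}. Transversality gives the five-foliation splitting compatible with $TM = E^s \oplus E^{\hat{c}} \oplus \mathbb R X \oplus E^u$.

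Next I would assemble the local product structure. Corollary \ref{cor:Fcs=FçxFsloc} provides the homeomorphism $\mathcal{F}^{ws}_{\mu_4}(x)\times \mathcal{F}^{\hat{c}}(x) \to \mathcal{F}^{cs}_{\mathrm{loc}}(x)$, and Proposition \ref{prop:Fçsloc=FçxFsloc} gives the analogous decomposition $\mathcal{F}^s_{\nu_4''}(x)\times \mathcal{F}^{\hat{c}}(x)\to \mathcal{F}^{\hat{c}s}_{\mathrm{loc}}(x)$. Coupled with the local bracket coming from Propositions \ref{prop:unstproj} and \ref{prop:weakunstproj} (the unstable and weak unstable projections), these identifications produce, at every $x \in M$ and for all sufficiently small $\delta > 0$, a continuous map that identifies a neighborhood of $x$ with the product of a local stable plaque, a local unstable plaque, a local subcenter plaque and a local flow segment, via the successive intersections $\mathcal{F}^u_\delta(\cdot)\cap\mathcal{F}^{\hat{c}s}_\delta(\cdot)$ and $\Phi_\delta(\cdot)\cap\mathcal{F}^s_\delta(\cdot)\cap\mathcal{F}^{\hat{c}}_\delta(\cdot)$. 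Continuity and uniqueness come from Lemmas \ref{lem:Piu}, \ref{lem:Piwu} and Proposition \ref{prop:locprodstruquotient}; invariance of each factor under $(\varphi^t)$ is already established.

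The only real obstacle I foresee is a bookkeeping one: carefully matching our notation (in particular our distinction between the flow direction and the subcenter bundle) with the conventions of \cite{wang_quasi-shadowing_2023}, whose ``center'' typically absorbs $\mathbb R X$. Once one checks that their local product structure hypothesis is equivalent to the foliated product just described --- which is a direct translation --- their main theorem applies and yields the quasi-shadowing property on $M$, completing the proof.
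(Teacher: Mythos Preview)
Your proposal is correct and follows essentially the same approach as the paper: the paper simply remarks that the proof of Theorem \ref{thm:subcentcomp} establishes local product structure in the sense of \cite{wang_quasi-shadowing_2023}, and then invokes their quasi-shadowing theorem. Your write-up is a more detailed unpacking of exactly which results from Sections \ref{sec:4}--\ref{sec:6} yield that local product structure, but the strategy is identical.
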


\section{Local product structure in the leaf space $M/\mathcal{F}^{\hat{c}}$}
\label{sec:7}

Let $(\varphi^t)$ be a partially hyperbolic flow with a flow invariant subcenter foliation $\mathcal{F}^{\hat{c}}$ with $C^1$ leaves on a smooth compact manifold $M$.
As it has been mentioned in Theorem \ref{thm:carra}, the leaf space $\overline{M}:=M/\mathcal{F}^{\hat{c}}$ is a compact topological manifold and the quotient map $p:M\to \overline{M}$ is a continuous fiber bundle whose fibers are the subcenter leaves, by Reeb's stability theorem \ref{thm:reeb}.
For a set $A\subset M$, we denote by $\overline{A}=p(A)$ its $\mathcal{F}^{\hat{c}}$-saturation. The same notation can be used for points.

As mentioned earlier, the center-unstable and subcenter-stable foliations project on $\overline{M}$ to topological foliations. We prove that they form a local product structure:
\begin{proposition}
    There exist $0<\epsilon<\epsilon'$ such that for every $x\in M$,
    the map
    $$\begin{array}{ccl}
        \overline{\mathcal{F}^{wu}_\epsilon(x)} \times \overline{\mathcal{F}^{s}_\epsilon(x)} & \to & \overline{M} \\
        (\overline{y},\overline{z}) & \mapsto& \overline{\mathcal{F}^{wu}_{\epsilon'}(z)} \cap \overline{\mathcal{F}^{s}_{\epsilon'}(y)}
    \end{array}$$
    is a well-defined homeomorphism onto its image.
\end{proposition}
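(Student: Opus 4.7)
The plan is to realize the stated chart as the projection under $p\colon M\to\overline{M}$ of the local product structure on $M$ obtained by combining Propositions~\ref{prop:weakunstproj} and~\ref{prop:Fçsloc=FçxFsloc}. Together these identify, near any $x\in M$, a neighborhood of $x$ with a product $\mathcal{F}^{wu}\times\mathcal{F}^{s}\times\mathcal{F}^{\hat{c}}$; quotienting by the last factor should yield the map in the statement, which in those coordinates is simply $(y,z)\mapsto(y,z)$ after collapsing $\mathcal{F}^{\hat{c}}$.

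First I would fix $\epsilon'>0$ small enough so that (i) Proposition~\ref{prop:weakunstproj} is available at scale $\epsilon'$, yielding the local product map $\pi^{wu}$; (ii) Proposition~\ref{prop:Fçsloc=FçxFsloc} splits $\mathcal{F}^{\hat{c}s}$ as $\mathcal{F}^{s}\times\mathcal{F}^{\hat{c}}$ at scale $\epsilon'$; and (iii) Lemma~\ref{lem:transvcompafol}, applied separately to the transverse pairs $(\mathcal{F}^{s},\mathcal{F}^{\hat{c}})$ and $(\mathcal{F}^{wu},\mathcal{F}^{\hat{c}})$, gives $\mathcal{F}^{*}_{4\epsilon'}(w)\cap\mathcal{F}^{\hat{c}}(w)=\{w\}$ for every $w\in M$ and $*\in\{s,wu\}$. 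Then $\epsilon<\epsilon'$ is chosen small enough that $d(y,z)<C'\epsilon'$ for every $y\in\mathcal{F}^{wu}_{\epsilon}(x)$ and $z\in\mathcal{F}^{s}_{\epsilon}(x)$, making $\pi^{wu}(y,z)$ well-defined.

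Condition (iii), together with the fact that $p$ is a topological fiber bundle (Reeb stability, Theorem~\ref{thm:reeb}) whose fibers are transverse to $\mathcal{F}^{s}$ and $\mathcal{F}^{wu}$, implies that the restrictions $p|_{\mathcal{F}^{*}_{\epsilon'}(x)}$ are topological embeddings; hence every $\overline{y}\in\overline{\mathcal{F}^{wu}_{\epsilon}(x)}$ (resp.\ $\overline{z}\in\overline{\mathcal{F}^{s}_{\epsilon}(x)}$) has a unique lift $y\in\mathcal{F}^{wu}_{\epsilon}(x)$ (resp.\ $z\in\mathcal{F}^{s}_{\epsilon}(x)$). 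Setting $u:=\pi^{wu}(y,z)$, the containment $\overline{u}\in\overline{\mathcal{F}^{wu}_{\epsilon'}(z)}$ is automatic, and because $u\in\mathcal{F}^{\hat{c}s}_{2\epsilon'}(y)$, Proposition~\ref{prop:Fçsloc=FçxFsloc} writes $u=\mathcal{F}^{\hat{c}}(a)\cap\mathcal{F}^{s}_{\epsilon'}(b)$ for a unique $a\in\mathcal{F}^{s}_{\epsilon'}(y)$, whence $\overline{u}=\overline{a}\in\overline{\mathcal{F}^{s}_{\epsilon'}(y)}$. Uniqueness of $\overline{u}$ in the intersection reduces, via compactness of $\mathcal{F}^{\hat{c}}$-leaves used to compare ambient and leaf distances, to the uniqueness statement of Proposition~\ref{prop:weakunstproj}(i).

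Continuity of $(\overline{y},\overline{z})\mapsto\overline{u}$ is then the composition of the continuous lifts, $\pi^{wu}$, and $p$. Injectivity and openness are transparent in the product coordinates $\mathcal{F}^{wu}\times\mathcal{F}^{s}\times\mathcal{F}^{\hat{c}}$: there $\pi^{wu}(y,z)$ has coordinates $(y,z,0)$ and $p$ collapses the third factor, so the composition becomes the identity $(y,z)\mapsto(y,z)$ modulo $\mathcal{F}^{\hat{c}}$, and the inverse is continuous by the local triviality of $p$. The main obstacle will be the uniqueness step, where passing from an ambient-distance bound to a $\mathcal{F}^{\hat{c}}$-leaf-distance bound on a potential competitor rests on the compact-foliation metric comparison recalled in Section~\ref{sec:4}; all other pieces are direct applications of the propositions already established.
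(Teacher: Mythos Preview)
Your strategy---lift $(\overline y,\overline z)$ to the unique representatives $y\in\mathcal{F}^{wu}_\epsilon(x)$, $z\in\mathcal{F}^{s}_\epsilon(x)$, set $u=\pi^{wu}(y,z)$, and project---is sound, and close in spirit to the paper's proof. The paper packages the same ingredients differently: it first isolates an auxiliary lemma stating that for $d(x,y)<\nu$ the map $w\in\mathcal{F}^{\hat c}(y)\mapsto\Pi^{wu}_{x,y}(w)$ is a homeomorphism onto a \emph{single} subcenter leaf $\mathcal{F}^{\hat c}(\Pi^{wu}_{x,y}(y))$; from this it follows at once that $\mathcal{F}^{\hat c}(\mathcal{F}^{wu}_{\epsilon'}(z))\cap\mathcal{F}^{\hat c}(\mathcal{F}^{s}_{\epsilon'}(y))$ is exactly one leaf and does not depend on the representatives $y,z$. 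Your existence argument via Proposition~\ref{prop:Fçsloc=FçxFsloc} is a valid alternative to this.

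There is, however, a genuine soft spot in your uniqueness step. You claim it reduces to Proposition~\ref{prop:weakunstproj}(i), but that proposition only controls the intersection $\mathcal{F}^{wu}_{\nu_0}(z)\cap\bigcup_{w\in\mathcal{F}^{\hat c}_{\nu_0}(y)}\mathcal{F}^{s}_{\nu_0}(w)$, i.e.\ with a \emph{local} subcenter plaque $\mathcal{F}^{\hat c}_{\nu_0}(y)$. A competing point $\overline v$ comes with $v_1\in\mathcal{F}^{wu}_{\epsilon'}(z)$ and $v_2\in\mathcal{F}^{s}_{\epsilon'}(y)$ on the same subcenter leaf, but $v_1$ may lie far along $\mathcal{F}^{\hat c}(v_2)$ from $v_2$; the metric comparison for compact foliations does not bring $v_1$ into $\mathcal{F}^{\hat c}_{\nu_0}(y)$. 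Two fixes: either (a) invoke completeness (Lemma~\ref{lem:t=0} or Proposition~\ref{prop:Fçsloc=FçxFsloc}) to get $v_1\in\bigcup_{w\in\mathcal{F}^{\hat c}(y)}\mathcal{F}^{s}_{\nu_1}(w)$ and then apply Lemma~\ref{lem:weakunstprojtrivholo} (the trivial-holonomy globalization over the whole leaf $\mathcal{F}^{\hat c}(y)$), which gives uniqueness of the intersection with $\mathcal{F}^{wu}_{\nu_1}(z)$; or (b) prove directly the paper's lemma that $\Pi^{wu}$ carries $\mathcal{F}^{\hat c}(y)$ homeomorphically onto a single subcenter leaf. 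Either route closes the gap; your citation of Proposition~\ref{prop:weakunstproj}(i) alone does not.
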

We need the following lemma:

\begin{lemma}
    There exits $\nu>0$ such that for every $x,y \in M$ with $d(x,y)<\nu$, the map $w\in \mathcal{F}^{\hat{c}}(y)\mapsto \Pi^{wu}_{x,y}(w)\in \mathcal{F}^{\hat{c}}(\Pi^{wu}_{x,y}(y)) $ is a well-defined homeomorphism.
\end{lemma}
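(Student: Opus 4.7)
The plan is to first prove the map is well-defined, that is, that $\Pi^{wu}_{x,y}(w)$ lies in $\mathcal{F}^{\hat{c}}(y')$ (with $y':=\Pi^{wu}_{x,y}(y)$) for every $w\in\mathcal{F}^{\hat{c}}(y)$, and then deduce the homeomorphism property. By Lemma \ref{lem:Piwu}, $\Pi^{wu}_{x,y}(w)$ is always defined and lies in $\bigcup_{z\in\mathcal{F}^{\hat{c}}(x)}\mathcal{F}^s_{\nu_1}(z)\subset\mathcal{F}^{\hat{c}s}(x)=\mathcal{F}^{\hat{c}s}(y')$, so the only issue is to rule out jumping from one subcenter leaf inside $\mathcal{F}^{\hat{c}s}(y')$ to another as $w$ moves.

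I would establish well-definedness by a connectedness argument. Consider $A:=\{w\in\mathcal{F}^{\hat{c}}(y) : \Pi^{wu}_{x,y}(w)\in\mathcal{F}^{\hat{c}}(y')\}$; it is nonempty since $y\in A$, and it is closed because $\Pi^{wu}_{x,y}$ is continuous (Lemma \ref{lem:Piwu}) while the leaf space $M/\mathcal{F}^{\hat{c}}$ is Hausdorff (Theorem \ref{thm:carra}, using trivial holonomy). To show $A$ is open, pick $w_0\in A$, set $q_0:=\Pi^{wu}_{x,y}(w_0)\in\mathcal{F}^{\hat{c}}(y')$, and work in a chart $(U,\psi)$ centered at $w_0$ that is simultaneously adapted to the transverse triple $\mathcal{F}^{\hat{c}},\mathcal{F}^s,\mathcal{F}^{wu}$ (whose existence follows from the local product structure provided by Lemma \ref{lem:bonhettriple} combined with transversality). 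By shrinking $\nu$ and hence $\nu_1$, one forces $q_0\in U$. In coordinates $(a,b,c)$ adapted to the three plaque types, with $w_0=(0,0,0)$ and $q_0=(0,0,c_0)$, the plaque of $\mathcal{F}^{\hat{c}s}(x)=\mathcal{F}^{\hat{c}s}(q_0)$ through $q_0$ reads $\{c=c_0\}$ and the $\mathcal{F}^{\hat{c}}$-plaque through $q_0$ reads $\{b=0,c=c_0\}$. For $w=(a_w,0,0)$ close to $w_0$ in $\mathcal{F}^{\hat{c}}(y)$, the $\mathcal{F}^{wu}$-plaque through $w$ meets the $\mathcal{F}^{\hat{c}s}$-plaque through $q_0$ exactly at $(a_w,0,c_0)$, which sits in the $\mathcal{F}^{\hat{c}}$-plaque of $q_0$; uniqueness in Lemma \ref{lem:weakunstprojtrivholo} identifies this local intersection with $\Pi^{wu}_{x,y}(w)$, giving $w\in A$. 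Connectedness of $\mathcal{F}^{\hat{c}}(y)$ yields $A=\mathcal{F}^{\hat{c}}(y)$.

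For the homeomorphism property, write $F:=\Pi^{wu}_{x,y}|_{\mathcal{F}^{\hat{c}}(y)}:\mathcal{F}^{\hat{c}}(y)\to\mathcal{F}^{\hat{c}}(y')$. Continuity is again Lemma \ref{lem:Piwu}. For injectivity, if $F(w_1)=F(w_2)=q$, then $w_1,w_2\in\mathcal{F}^{wu}_{\nu_1}(q)$, so their $\mathcal{F}^{wu}$-leaf distance is at most $2\nu_1$; transversality of $\mathcal{F}^{wu}$ and $\mathcal{F}^{\hat{c}}$ together with compactness of subcenter leaves (the direct analogue of Lemma \ref{lem:transvcompafol} for the pair $\mathcal{F}^{wu},\mathcal{F}^{\hat{c}}$) forces $w_1=w_2$, provided $\nu_1$ was chosen small enough. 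Since $\mathcal{F}^{\hat{c}}(y)$ and $\mathcal{F}^{\hat{c}}(y')$ are connected topological manifolds of the same dimension, invariance of domain makes $F$ an open map; its image is simultaneously open and compact, hence clopen in the connected target, and contains $y'=F(y)$, so it equals $\mathcal{F}^{\hat{c}}(y')$. Thus $F$ is a continuous bijection onto a Hausdorff space with open image, i.e. a homeomorphism.

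The main obstacle is the openness step of the connectedness argument: one must choose a chart resolving all three transverse foliations at once and use the dynamical coherence of the flow (Proposition \ref{prop:dyncohsubcen}), which tells us that $\bigcup_{z\in\mathcal{F}^{\hat{c}}(x)}\mathcal{F}^s_{\nu_1}(z)$ coincides locally near $q_0$ with a single $\mathcal{F}^{\hat{c}s}$-plaque, so that the local intersection can be tracked coordinate-wise. The trivial holonomy of $\mathcal{F}^{\hat{c}}$ enters both through Hausdorffness of $M/\mathcal{F}^{\hat{c}}$ (closedness of $A$) and through the uniqueness in Lemma \ref{lem:weakunstprojtrivholo} (pinning down $\Pi^{wu}_{x,y}(w)$ as the local intersection computed in coordinates).
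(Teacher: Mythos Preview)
Your overall architecture (open/closed/connected for well-definedness, then injectivity plus invariance of domain for the homeomorphism) is sound, and the injectivity and surjectivity steps are fine. The gap is in the openness argument. You assert the existence of a chart simultaneously adapted to the triple $\mathcal{F}^{\hat c},\mathcal{F}^s,\mathcal{F}^{wu}$, citing Lemma~\ref{lem:bonhettriple}; but that lemma only yields non-emptiness of certain intersections, not a triply-foliated chart. For three pairwise transverse foliations such a chart need not exist: its existence is equivalent to the holonomy of each foliation preserving the plaques of the other two. Concretely, in your coordinates you claim the $\mathcal{F}^{\hat c}$-plaque through $q_0$ is $\{b=0,\,c=c_0\}$, yet the chart is built at $w_0$ and only straightens $\mathcal{F}^{\hat c}$ in the slice $\{c=0\}$. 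Knowing it is also straight in $\{c=c_0\}$ amounts to saying the local $\mathcal{F}^{wu}$-holonomy from $\{c=0\}$ to $\{c=c_0\}$ carries $\mathcal{F}^{\hat c}$-plaques to $\mathcal{F}^{\hat c}$-plaques, which is exactly the statement you are trying to establish. So the argument is circular at this point.

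The paper avoids this by invoking Proposition~\ref{prop:Fçsloc=FçxFsloc}, which (via Lemma~\ref{lem:t=0}) has already shown that the target $\bigcup_{z\in\mathcal{F}^{\hat c}(x)}\mathcal{F}^{s}_{\nu_1}(z)$ is globally a product $\mathcal{F}^{s}_{\mathrm{loc}}(x)\times\mathcal{F}^{\hat c}(x)$, saturated by whole subcenter leaves. That product structure is the missing ingredient: once you know every point in the image of $\Pi^{wu}_{x,y}$ lies on a well-identified subcenter leaf indexed by a point of $\mathcal{F}^{s}_{\mathrm{loc}}(x)$, the connectedness argument (or a direct comparison with the inverse map coming from the same proposition) goes through without needing a three-fold chart. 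If you want to keep your strategy, replace the triply-adapted chart by the pair: a bifoliated chart for $(\mathcal{F}^{\hat c s},\mathcal{F}^{wu})$ together with the product description of the $\mathcal{F}^{\hat c s}$-plaque through $q_0$ furnished by Proposition~\ref{prop:Fçsloc=FçxFsloc}.
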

\begin{proof}
    First take $\nu<\nu_2$ so that for every $w\in \mathcal{F}^{\hat{c}}(y)$, $\Pi^{wu}_{x,y}(w)$ is well defined. Then by the same proof as that of Proposition \ref{prop:Fçsloc=FçxFsloc}, if $\nu$ is sufficiently small and $d(x,y)<\nu$, the map $w\in \mathcal{F}^{\hat{c}}(y)\mapsto \Pi^{wu}_{x,y}(w)\in \mathcal{F}^{\hat{c}}(\Pi^{wu}_{x,y}(y)) $ is well-defined and surjective. The injectivity and continuity of this map as well as its inverse can be checked.
\end{proof}
\begin{proof}[Proof of Proposition \ref{prop:locprodstruquotient}]
    By the previous result, there exist $0<\epsilon<\epsilon'$ such that for every $x \in M$, for every $y\in \mathcal{F}^{\hat{c}}(\mathcal{F}^{wu}_\epsilon(x) )$ and $z \in \mathcal{F}^{\hat{c}}(\mathcal{F}^{s}_\epsilon(x))$, 
    $\mathcal{F}^{\hat{c}}(\mathcal{F}^{wu}_{\epsilon'}(z))\cap \mathcal{F}^{\hat{c}}(\mathcal{F}^{s}_{\epsilon'}(y))$ is exactly a subcenter leaf $L_{x,y,z}$.
    If $y'\in \mathcal{F}^{\hat{c}}(y)$ and $z'\in \mathcal{F}^{\hat{c}}(z)$, then by the previous result it comes that $L_{x,y',z'}=L_{x,y,z}$.
    Therefore, the map $$\eta_x:\left \{\begin{array}{ccl}
        \overline{\mathcal{F}^{wu}_\epsilon(x)} \times \overline{\mathcal{F}^{s}_\epsilon(x)} & \to & \overline{M} \\
        (\overline{y},\overline{z}) & \mapsto& \overline{\mathcal{F}^{wu}_{\epsilon'}(z)} \cap \overline{\mathcal{F}^{s}_{\epsilon'}(y)}
    \end{array}\right .$$
    is well defined, and it can be checked that it is a homeomorphism onto its image.
\end{proof}
\bibliographystyle{abbrv}
\bibliography{article_3.bib}
\end{document}